\newtheorem{theorem}{Theorem}[section]
\newtheorem{proposition}[theorem]{Proposition}
\newtheorem{lemma}[theorem]{Lemma}
\newtheorem{corollary}[theorem]{Corollary}
\theoremstyle{definition}
\newtheorem{definition}[theorem]{Definition}
\theoremstyle{remark}
\newcommand\bbbone{{ \mathchoice {1\mskip-4mu\mathrm{l} } {1\mskip-4mu\mathrm{l} }{1\mskip-4.5mu\mathrm{l} } {1\mskip-5mu\mathrm{l}} }}
\newcommand*{\remove}{%
  \mathpalette\@remove
}
\def\@remove#1#2{
\def\@removesymbol{\boldsymbol\backslash}%
  \mathord{%
    \rlap{%
      \settowidth\dimen@{$\m@th#1{#2}$}%
      \kern.5\dimen@
      \settowidth\dimen@{$\m@th#1{\@removesymbol}$}%
      \kern-.5\dimen@
      $\m@th#1{\@removesymbol}$%
    }%
    {#2}%
  }%
}
\newcommand\dd{\text{\textup{d}}} 
\newcommand\ds{\text{\textup{s}}} 
\newcommand\hd{\widehat{\dd}} 
\newcommand{\grast}{\bullet} 
\newcommand\cdotaction{\mathord{\cdot}} 
\newcommand\exter{{\textstyle\bigwedge}} 
\newcommand\ordwedge{\mathord{\wedge}} 
\newcommand\ad{{\text{\textup{ad}}}} 
\newcommand\sign{{\text{\textup{sign}}}}
\newcommand\hor{{\text{\textup{hor}}}} 
\newcommand\inv{{\text{\textup{inv}}}} 
\newcommand\der{{\text{\textup{Der}}}} 
\newcommand\ensvide{{\varnothing}} 
\newcommand\Id{{\text{\textup{Id}}}} 
\newcommand\loc{{\text{\textup{loc}}}}
\newcommand\Ad{{\text{\textup{Ad}}}} 
\newcommand\lie{{\text{\textup{Lie}}}}
\newcommand\equ{{\text{\textup{equ}}}} 
\newcommand{\hodgeast}{\mathop{\star}}
\newcommand{\tla}{{\lienotation{TLA}}}
\newcommand{\halpha}{\widehat{\alpha}}
\newcommand{\raR}{{\widetilde{R}}}
\newcommand{\hg}{{\widehat{g}}}
\newcommand{\maxinner}{{\text{m.i.}}}
\newcommand{\inner}{{\text{\textup{inner}}}}
\newcommand{\algebraic}{{\text{\textup{alg.}}}}
\newcommand{\dR}{{\text{dR}}}
\newcommand{\lfc}{\mathfrak{a}} 
\DeclareMathOperator{\Aut}{Aut} 
\DeclareMathOperator{\End}{End} 
\DeclareMathOperator{\tr}{tr} 
\newcommand\varnotation[1]{{\mathcal{#1}}}
\newcommand\algnotation[1]{{\mathbf{#1}}}
\newcommand\lienotation[1]{{\mathbf{\mathsf{#1}}}}
\newcommand\grnotation[1]{{\mathsf{#1}}}
\newcommand\varA{{\varnotation{A}}}
\newcommand\varE{{\varnotation{E}}}
\newcommand\varL{{\varnotation{L}}}
\newcommand\varM{{\varnotation{M}}}
\newcommand\varP{{\varnotation{P}}}
\newcommand\algA{{\algnotation{A}}}
\newcommand\algzero{{\grnotation{0}}}
\newcommand\lieA{{\lienotation{A}}}
\newcommand\lieL{{\lienotation{L}}}
\newcommand\kg{{\mathfrak g}}
\newcommand\kD{{\mathfrak D}}
\newcommand\kS{{\mathfrak S}} 
\newcommand\kU{{\mathfrak U}}
\newcommand\kX{{\mathfrak X}}
\newcommand\kY{{\mathfrak Y}}
\newcommand\ksl{{\mathfrak{sl}}}
\newcommand\ku{{\mathfrak{u}}}
\newcommand\gC{{\mathbb C}}
\newcommand\sfX{{\mathsf X}}
\newcommand\caF{{\mathcal F}}
\newcommand\caH{{\mathcal H}}
\newcommand\caL{{\mathcal L}}
\newcommand\caN{{\mathcal N}}
\newcommand\caZ{{\mathcal Z}}
\numberwithin{equation}{section}
\begin{document}

\begin{center}%
 {\LARGE 
 Local description of generalized forms\\ on transitive Lie algebroids and applications 
 \par}%
 \vskip 3em%
 {\large
  \lineskip .75em%
      \begin{tabular}[t]{c}%
        Cédric Fournel \and Serge Lazzarini \and Thierry Masson
      \end{tabular}\par}%
      \vskip 1.5em%
Centre de Physique Théorique\footnote{Laboratoire affilié à la FRUMAM fédération de recherche 2291 du CNRS.},\\
Aix-Marseille Univ; CNRS, UMR 6207; Univ Sud Toulon Var;\\
13288 Marseille Cedex 9, France\par      
      \vskip 2.5em%
    {\large September 20, 2011 \par}
  \end{center}\par


\begin{abstract}
In this paper we study the local description of spaces of forms on transitive Lie algebroids. We use this local description to introduce global structures like metrics, $\hodgeast$-Hodge operation and integration along the algebraic part of the transitive Lie algebroid (its kernel). We construct a Čech-de~Rham bicomplex with a Leray-Serre spectral sequence. We apply the general theory to Atiyah Lie algebroids and to derivations on a vector bundle.

\end{abstract}

\vfill
\noindent CPT-P009-2011

\newpage

\tableofcontents

\bigskip

The study of Lie algebroids relies heavily on their local descriptions. These local descriptions are essentially  managed using geometric structures, like local fibered coordinates, (co)vector fields and tensors \cite{Fern02a,Mack05a}.

In this paper we are interested in the study of local description of differential forms on a transitive Lie algebroid $\lieA$ with kernel $\lieL$. We mainly consider two spaces of forms: forms with values in functions on the base manifold, and forms with values in $\lieL$. Our motivation is to establish a correspondence between differential complexes describing global objects on $\lieA$ and the corresponding differential complexes describing their local counterparts. In doing that, we have to manage the gluing relations at the intersection of local trivializations of the Lie algebroid in a convenient and manageable way. We shall not be concerned with more geometric aspects, like Lie groupoids and integration of Lie algebroids. We will avoid as much as possible the use of coordinates on the Lie algebroid. Only coordinates on the base manifold will be used to describe local forms and related structures.

One of the goals of this paper is to use these local descriptions of forms (Proposition~\ref{prop-relationtrivforms}) to introduce global constructions which are mandatory to develop and study Lagrangian field theories associated to generalized gauge fields in the context of (transitive) Lie algebroids. Concrete physical applications are out of the scope of the present paper. The aforementioned constructions are metrics and $\hodgeast$-Hodge operation (sub-sections~\ref{subsec-metrics} and \ref{subsec-hodgeoperators}), on the one hand, and orientability and (inner) integration (sub-section~\ref{subsec-innerorientationandintegration}), on the other hand. 

As noticed in \cite{Mass38}, the derivation-based noncommutative geometry studied in \cite{Mass14} and \cite{Mass15} is strongly related to the ``geometry'' involved in the space of forms of a transitive Lie algebroid with values in its kernel. This relation comes out from the fact that this noncommutative geometry is related to a well-identified Atiyah transitive Lie algebroid. 

In the present paper, we extend to more general transitive Lie algebroids the constructions described in \cite{Mass15} on metrics, integration along the algebraic components of this noncommutative geometry and a Leray-Serre spectral sequence associated to a Čech-de~Rham bicomplex.

The local description of forms on $\lieA$ with values in functions and the integration along the algebraic part have been studied in \cite{Kuba96a,MR1908998,MR2020382}. Here we generalize a part of these works by defining an integral along the ``fibre'' (algebraic or ``inner'' part of the transitive Lie algebroid) for forms with values in the kernel $\lieL$ (Definition~\ref{def-innerintegration}) and by studying the Čech-de~Rham bicomplex associated to this space of forms (Section~\ref{sec-cohomology}).

In Proposition~\ref{prop-relationtrivforms}, the fiber structure in Lie algebras of $\lieL$ in the local description of forms with values in $\lieL$ is taken into account using a purely algebraic map $\halpha_{j}^{\,i}$ defined in \eqref{eq-definitionalphahat}. The properties of this map, described in part in Proposition~\ref{prop-localdifferentialcommutes}, make possible many constructions presented in this paper.

In \cite{Kuba96a,MR1908998}, the structure which permits to define integration along the fiber is a non-singular cross-section $\varepsilon$ in $\exter^n \lieL$. The starting point of our definition is different. In Section~\ref{sec-metric-integration}, we use the notion of metric on $\lieL$. Then we associated to such a metric a global form of maximal inner degree (Proposition~\ref{prop-globalinnerform}). This form plays the role of a ``volume form'' for this fiber integration. This form is dual to $\varepsilon$ in a certain sense as explained. On the other hand, the notion of metric permits also to define Hodge operators.

In Section~\ref{sec-applicationAtiyah} we specify four constructions to Atiyah Lie algebroids for which the underlying geometry of the principal fiber bundle helps us to improve some results obtained in the general case. For instance, using integrations along the algebraic parts, we related in Proposition~\ref{prop-relationsdeRhamTLAAtiyah} the de~Rham calculus on the principal fiber bundle to the space of forms with values in the kernel. In Section~\ref{sec-Derivationsonavectorbundle} we improve some general results for the case of Lie algebroids of derivations on a vector bundle. In that situation we extend in \eqref{eq-definnerintegrationtrace} our notion of integration along the algebraic part, and we make apparent close relations with the noncommutative structures mentioned above.

\section{Differential forms and their local descriptions}

\subsection{Lie algebroids}

In this paper we use the notations introduced in \cite{Mass38}. We refer to \cite{Mack05a} for more developments on Lie algebroids.

We use the following algebraic version of the definition of Lie algebroids:

\begin{definition}
\label{def-liealgebroidalgebraic}
Let $\varM$ be a smooth manifold. A Lie algebroid $\lieA$ is a finite projective module over $C^\infty(\varM)$ equipped with a Lie bracket $[-,-]$ and a $C^\infty(\varM)$-linear Lie morphism, the anchor, $\rho : \lieA \rightarrow \Gamma(T\varM)$ such that
\begin{equation*}
[\kX, f \kY] = f [\kX, \kY] + (\rho(\kX)\cdotaction f) \kY
\end{equation*}
for any $\kX, \kY \in \lieA$ and $f \in C^\infty(\varM)$ where $\Gamma(T\varM)$ denotes as usual the space of smooth vector fields on $\varM$.
\end{definition}

In this paper, $C^\infty(\varM)$ stands for complex valued functions on $\varM$.

The requirement that $\lieA$ be a finitely generated projective module implies that there is a vector bundle $\varA \xrightarrow{\pi} \varM$ such that $\lieA = \Gamma(\varA)$. This makes this definition equivalent to the one using only geometric structures.

The notation $\lieA \xrightarrow{\rho} \Gamma(T\varM)$ will be used for a Lie algebroid $\lieA$ over the manifold $\varM$ with anchor $\rho$.

Our paper focuses on transitive Lie algebroids :
\begin{definition}
A Lie algebroid $\lieA \xrightarrow{\rho} \Gamma(T\varM)$ is transitive if $\rho$ is surjective.
\end{definition}

The following result is well-known \cite{Mack05a} and is a central component of our constructions:
\begin{proposition}[The kernel of a transitive Lie algebroid]
Let $\lieA$ be a transitive Lie algebroid. Then $\lieL = \ker \rho$ is a Lie algebroid with null anchor on $\varM$.
The vector bundle $\varL$ such that $\lieL = \Gamma(\varL)$ is a locally trivial bundle in Lie algebras. The Lie bracket on $\lieL$ is inherited from the one on the Lie algebra on which the fiber of this vector bundle is modelled.

One has the short exact sequence of Lie algebras and $C^\infty(\varM)$-modules
\begin{equation*}
\xymatrix@1{{\algzero} \ar[r] & {\lieL} \ar[r]^-{\iota} & {\lieA} \ar[r]^-{\rho} & {\Gamma(T\varM)} \ar[r] & {\algzero}}
\end{equation*}
$\lieL$ will be called the kernel of $\lieA$.
\end{proposition}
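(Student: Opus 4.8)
The plan is to verify in turn that $\lieL$ is a Lie subalgebroid with vanishing anchor, that its underlying module is finite projective (so that $\lieL = \Gamma(\varL)$ for a genuine vector bundle), that the fibres of $\varL$ carry Lie algebra structures and that $\varL$ is locally trivial \emph{as a bundle of Lie algebras}, and finally that the displayed sequence is exact in both the module and the Lie-algebra categories.

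First I would check closure and the null anchor. Since $\rho$ is a Lie morphism, for $\kX,\kY\in\lieL$ one has $\rho([\kX,\kY])=[\rho(\kX),\rho(\kY)]=0$, so $[\kX,\kY]\in\lieL$ and $\lieL$ is a Lie subalgebra of $\lieA$. The anchor of $\lieL$ is the restriction $\rho|_{\lieL}$, which vanishes by definition of the kernel; consequently the Leibniz rule of Definition~\ref{def-liealgebroidalgebraic} degenerates to $[\kX,f\kY]=f[\kX,\kY]$ for $\kX,\kY\in\lieL$ and $f\in C^\infty(\varM)$. Thus the bracket on $\lieL$ is $C^\infty(\varM)$-bilinear, hence tensorial, which is precisely what will let it descend to a pointwise operation on fibres. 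Because $\rho$ is surjective and $\Gamma(T\varM)$ is itself a finite projective $C^\infty(\varM)$-module, the module sequence $\algzero\to\lieL\to\lieA\to\Gamma(T\varM)\to\algzero$ splits; a splitting $\nabla:\Gamma(T\varM)\to\lieA$ with $\rho\circ\nabla=\Id$ (a connection on the Lie algebroid) exhibits $\lieA\cong\lieL\oplus\nabla(\Gamma(T\varM))$, so $\lieL$ is a direct summand of a finite projective module, hence finite projective, and by Serre--Swan $\lieL=\Gamma(\varL)$ for a vector bundle $\varL\to\varM$. Tensoriality of the bracket then defines, for each $x\in\varM$, a Lie algebra structure on the fibre $\varL_x$ via $[\xi,\eta]_x:=[\kX,\kY](x)$ for any sections extending $\xi,\eta$, independence of the extension and the Jacobi identity following from $C^\infty(\varM)$-bilinearity and the Jacobi identity in $\lieA$.

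The main obstacle is the local triviality of $\varL$ as a bundle of Lie algebras, i.e. that the fibre Lie algebras are locally constant; here transitivity is essential. Using the splitting $\nabla$ I would set $\nabla_X \ell := [\nabla(X),\iota(\ell)]$, which lands in $\lieL$ since $\rho(\nabla_X\ell)=[X,0]=0$, and check that this is a linear connection on $\varL$ which acts by derivations of the bracket, $\nabla_X[\ell_1,\ell_2]=[\nabla_X\ell_1,\ell_2]+[\ell_1,\nabla_X\ell_2]$, the last identity being a direct consequence of the Jacobi identity in $\lieA$. If $s_1,s_2$ are parallel along a curve then so is $[s_1,s_2]$, so the parallel transport $P_\gamma$ satisfies $P_\gamma[\xi,\eta]=[P_\gamma\xi,P_\gamma\eta]$ and is therefore a Lie algebra isomorphism between fibres. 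Performing radial parallel transport from a centre $x_0$ inside a contractible chart $U$ then yields a smooth bundle isomorphism $\varL|_U\cong U\times\kg$ with $\kg:=\varL_{x_0}$ that is fibrewise a Lie algebra isomorphism. This is exactly the asserted local triviality, the bracket on $\lieL$ being modelled on the fixed Lie algebra $\kg$.

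Finally, exactness of the sequence $\algzero\to\lieL\xrightarrow{\iota}\lieA\xrightarrow{\rho}\Gamma(T\varM)\to\algzero$ is immediate: $\iota$ is injective as an inclusion, $\operatorname{im}\iota=\ker\rho=\lieL$ by definition, and $\rho$ is surjective by transitivity. All maps are $C^\infty(\varM)$-linear and, since $\iota$ preserves the restricted bracket and $\rho$ is a Lie morphism, the sequence is simultaneously one of $C^\infty(\varM)$-modules and of Lie algebras.
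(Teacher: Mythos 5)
Your proof is correct, and it is worth noting that the paper itself offers no argument at all: it states the proposition as well-known and defers entirely to the cited reference \cite{Mack05a}. What you have written is essentially the standard proof found there. All the routine parts check out: the kernel is closed under the bracket because $\rho$ is a Lie morphism; the Leibniz rule degenerates to $C^\infty(\varM)$-bilinearity on $\lieL$, making the bracket tensorial and hence fibrewise; projectivity of $\Gamma(T\varM)$ splits the module sequence, so $\lieL$ is a direct summand of a finite projective module and Serre--Swan yields $\varL$. You also correctly identified that the only genuinely nontrivial claim is local triviality of $\varL$ \emph{as a bundle of Lie algebras}, and your mechanism for it — the adjoint-type connection $\nabla_X\ell = [\nabla(X),\iota(\ell)]$, which is $C^\infty(\varM)$-linear in $X$ because $\rho(\iota(\ell))=0$, satisfies Leibniz in $\ell$ because $\rho\circ\nabla=\Id$, and acts by derivations of the bracket via the Jacobi identity, so that parallel transport gives Lie algebra isomorphisms between fibres — is exactly Mackenzie's argument that a Lie algebra bundle admitting a bracket-derivation connection is locally trivial. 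Two cosmetic points: for radial parallel transport you want a star-shaped coordinate ball rather than merely a contractible chart, and smoothness of the resulting trivialization should be attributed to smooth dependence of the parallel-transport ODE on initial data; neither affects the validity of the argument.
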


\subsection{Differential forms}

There are natural spaces of ``differential forms'' to be considered on a (transitive or not) Lie algebroid $\lieA$. They depend on the choice of a representation of $\lieA$ on a vector bundle. We are interested in two of them. In \cite{Mass38} we have introduced the following notations for them.

\begin{definition}
\label{def-formsvaluesfunctions}
Let $\lieA \xrightarrow{\rho} \Gamma(T\varM)$ be a Lie algebroid (not necessarily transitive).
We define $(\Omega^\grast(\lieA), \hd_\lieA)$ as the graded commutative differential algebra of forms on $\lieA$ with values in $C^\infty(\varM)$. 
\end{definition}

This space of forms is the natural one to be considered \cite{ArAbCrai09,Crai03a,CraiFern2006a,CraiFern2009a,Fern03a,Mack05a} in order to study the structural properties of $\lieA$. Considerations about its cohomology can be found in those references. 

\smallskip
As noticed in \cite{Mass38}, some notions of connections can be understood in terms of another differential calculus.

\begin{definition}
\label{def-formsvalueskernel}
Let $\lieA \xrightarrow{\rho} \Gamma(T\varM)$ be a transitive Lie algebroid, with $\lieL$ its kernel.
We define $(\Omega^\grast(\lieA, \lieL), \hd)$ as the graded differential Lie algebra of forms on $\lieA$ with values in the kernel $\lieL$, where $\lieA$ is represented on $\lieL$ by the usual adjoint representation.
\end{definition}

$\Omega^\grast(\lieA, \lieL)$ is a natural graded module on the graded commutative algebra $\Omega^\grast(\lieA)$. This product is compatible with the two differentials in the sense that for any $\eta_p \in \Omega^p(\lieA)$ and $\omega_q \in \Omega^q(\lieA, \lieL)$ one has $\hd (\eta_p \omega_q) = (\hd_\lieA \eta_p) \omega_q + (-1)^{p} \eta_p (\hd \omega_q)$.

\medskip
The aim of this paper is to describe forms belonging to the previously defined spaces by using local descriptions of transitive Lie algebroids in terms of trivial Lie algebroids. We will need a concrete description of the previously defined differential complexes in this particular case.

A trivial Lie algebroid is just the Atiyah transitive Lie algebroid associated to a trivial principal fiber bundle $\varM \times G$ where $\varM$ is a manifold and $G$ is a Lie group (see Section~\ref{sec-applicationAtiyah}). Denote by $\kg$ the Lie algebra of $G$. A convenient description of the associated Lie algebroid is given by the following construction. Consider the trivial vector bundle $\varA = T\varM \oplus (\varM \times \kg)$. The space of smooth sections $\lieA = \Gamma(\varA)$ of $\varA$ is a Lie algebroid for the anchor $\rho(X \oplus \gamma) = X$ and the bracket $[X \oplus \gamma, Y \oplus \eta] = [X,Y] \oplus (X \cdotaction \eta - Y \cdotaction \gamma + [\gamma,\eta])$ for any 
$X,Y \in \Gamma(T\varM)$ and $\gamma,\eta \in \Gamma(\varM \times \kg) = \lieL$, where $\lieL$ is the kernel.

We shall use the compact notation $\tla(\varM, \kg)$ to designate the trivial Lie algebroid obtained this way.

For $\lieA = \tla(\varM, \kg)$, the graded commutative differential algebra $(\Omega^\grast(\lieA), \hd_\lieA)$ identifies with the total complex of the bigraded commutative algebra $\Omega^\grast(\varM) \otimes \exter^\grast \kg^\ast$ equipped with the two differential operators
\begin{align*}
\dd : \Omega^\grast(\varM) \otimes \exter^\grast \kg^\ast &\rightarrow \Omega^{\grast+1}(\varM) \otimes \exter^\grast \kg^\ast\\
\ds : \Omega^\grast(\varM) \otimes \exter^\grast \kg^\ast &\rightarrow \Omega^\grast(\varM) \otimes \exter^{\grast+1} \kg^\ast
\end{align*}
where $\dd$ is the de~Rham differential on $\Omega^\grast(\varM)$, and $\ds$ is the Chevalley-Eilenberg differential on $\exter^\grast \kg^\ast$ so that $\hd_\lieA = \dd + \ds$.

The graded differential Lie algebra $(\Omega^\grast(\lieA, \lieL), \hd)$ identifies with the total complex of the bigraded Lie algebra $\Omega^\grast(\varM) \otimes \exter^\grast \kg^\ast \otimes \kg$ equipped with the differential $\dd$ and the Chevalley-Eilenberg  differential $\ds'$ on $\exter^\grast \kg^\ast \otimes \kg$ for the adjoint representation of $\kg$ on itself, so that $\hd = \dd + \ds'$. We shall use the compact notation $(\Omega^\grast_\tla(\varM,\kg), \hd_\tla)$ for this graded differential Lie algebra.

\subsection{Local trivialization of a transitive Lie algebroid}

In this subsection we fix notations about local descriptions of transitive Lie algebroids in terms of trivial Lie algebroids. As explained in details in \cite{Mack05a} and with similar notation, a transitive Lie algebroid $\lieA \xrightarrow{\rho} \Gamma(T\varM)$ with kernel $\lieL = \Gamma(\varL)$ can be described locally as a triple $(U, \Psi, \nabla^0)$ where
\begin{itemize}
\item $\Psi : U \times \kg \xrightarrow{\simeq} \varL_U$ is a local trivialization of $\varL$ as a fiber bundle in Lie algebras over the open subset $U \subset \varM$. In particular one has $[\Psi(\gamma), \Psi(\eta)] = \Psi([\gamma, \eta])$
 and this morphism gives rise to an isomorphism of Lie algebras and $C^\infty(U)$-modules $\Psi : \Gamma(U \times \kg) \xrightarrow{\simeq} \lieL_U$.

\item $\nabla^0 :  \Gamma(TU) \rightarrow \lieA_U$ is an injective morphism of Lie algebras and $C^\infty(U)$-modules compatible with the anchors: $[\nabla^0_X, \nabla^0_Y] = \nabla^0_{[X,Y]}$, $\nabla^0_{fX} = f \nabla^0_X$, and $\rho \circ \nabla^0_X = X$ for any $X,Y \in \Gamma(TU)$ and any $f \in C^\infty(U)$.

\item For any $X \in \Gamma(TU)$ and any $\gamma \in \Gamma(U \times \kg)$, one has $[\nabla^0_X, \iota \circ \Psi(\gamma) ] = \iota \circ \Psi(X \cdotaction \gamma)$.
\end{itemize}

Such a triple defines an isomorphism of Lie algebroids
\begin{align*}
S : \tla(U, \kg) &\xrightarrow{\simeq} \lieA_U
&
S(X \oplus \gamma) &= \nabla^0_X + \iota \circ \Psi(\gamma)
\end{align*}

\begin{definition}
A Lie algebroid atlas for $\lieA$ is a family of triples $\{(U_i, \Psi_i, \nabla^{0,i})\}_{i \in I}$ such that $\bigcup_{i \in I} U_i = \varM$ and each triple $(U_i, \Psi_i, \nabla^{0,i})$ is a local trivialization of $\lieA$.
\end{definition}

Then the family $(U_i, \Psi_i)$ realizes a system of local trivializations of the fiber bundle in Lie algebras $\varL$. On $U_{ij} = U_i \cap U_j \neq \ensvide$ one can define 
\begin{align*}
\alpha^{i}_{j} : U_{ij} &\rightarrow \Aut(\kg) & \alpha^{i}_{j} &= \Psi_i^{-1} \circ \Psi_j.
\end{align*}

Any element $\kX \in \lieA$ can be trivialized as $X^i \oplus \gamma^i \in \tla(U_i, \kg)$ over each $U_i$ with the defining relation $S_i(X^i \oplus \gamma^i) = \kX_{|U_i}$. The local vector fields $X^i$ are the restriction of the global vector field $X = \rho(\kX)$, so that on $U_{ij}$ one has ${X^i}_{|U_{ij}} = {X^j}_{|U_{ij}}$. This permits one to denote all of them by $X$. There exists $\ell_{ij} \in \Omega^1(\lieA_{U_{ij}}, \lieL_{U_{ij}})$ such that 
\begin{align*}
\nabla^{0,j}_X &= \nabla^{0,i}_X + \iota \circ \ell_{ij}(X) 
& &\text{and} &
\Psi_i(\gamma^i) &= \Psi_j(\gamma^j) + \ell_{ij}(X). 
\end{align*}

Anticipating on some Čech cohomology considerations, we make a distinction between $U_{ij}$ and $U_{ji}$ for $i \neq j$. Then one defines without any ambiguity 
\begin{equation*}
\chi_{ij} = \Psi_i^{-1} \circ \ell_{ij} \in \Omega^1(U_{ij}) \otimes \kg 
\end{equation*}
so that 
\begin{equation*}
\gamma^i = \alpha^{i}_{j}(\gamma^j) + \chi_{ij}(X) 
\end{equation*}

On $U_{ijk} = U_i \cap U_j \cap U_k \neq \ensvide$, one has the two cocycle relations:
\begin{align*}
\alpha^{i}_{k} &= \alpha^{i}_{j} \circ \alpha^{j}_{k}
&
\chi_{ik} &= \alpha^{i}_{j} \circ \chi_{jk} + \chi_{ij}
\end{align*}
The composite map $X \oplus \gamma^i \mapsto X \oplus \gamma^j \mapsto X \oplus \gamma^i$ on $U_{ij}$ gives
\begin{align*}
\alpha^{i}_{j} \circ \alpha^{j}_{i} &= \Id \in \Aut(\kg)
&
\alpha^{i}_{j} \circ \chi_{ji} + \chi_{ij} &= 0
\end{align*}
These expressions are compatible with the previous ones upon defining
\begin{align*}
\alpha^{i}_{i} &= \Id \in \Aut(\kg)
&
\chi_{ii} &= 0
\end{align*}

\subsection{Local trivializations of forms}

Using a local description of a transitive Lie algebroid, we can locally describe a form using the following definition.

\begin{definition}
\label{def-traivializationofforms}
Let $(U, \Psi, \nabla^0)$ be a local trivialization of $\lieA$. To any $q$-form $\omega \in \Omega^q(\lieA, \lieL)$ we define a local $q$-form $\omega_{\loc} \in \Omega^q_\tla(U,\kg)$ by
\begin{equation*}
\omega_{\loc} = \Psi^{-1} \circ \omega \circ S
\end{equation*}
\end{definition}

Given a Lie algebroid atlas for $\lieA$, one associates to $\omega \in \Omega^q(\lieA, \lieL)$ a family of local forms $\omega_{\loc}^i \in \Omega^q_\tla(U_i,\kg)$.

For any $\kX_k \in \lieA$ with $1\leq k \leq q$, let $X_k \oplus \gamma^i_k \in \tla(U_i, \kg)$ be its family of trivializations. Then on any $U_{ij} = U_i \cap U_j \neq \ensvide$ one has
\begin{equation*}
\omega_{\loc}^i(X_1 \oplus \gamma^i_1, \ldots, X_q \oplus \gamma^i_q) = \alpha^{i}_{j} \circ \omega_{\loc}^j(X_1 \oplus \gamma^j_1, \ldots, X_q \oplus \gamma^j_q).
\end{equation*}
Notice that $s_{i}^{j} = S_j^{-1} \circ S_i : \tla(U_{ij}, \kg) \xrightarrow{\simeq} \tla(U_{ij}, \kg)$ is an isomorphism of (trivial) Lie algebroids and that the previous relation takes the compact form
\begin{equation}
\label{eq-changelocaltrivializationforms}
\omega_{\loc}^i = \alpha^{i}_{j} \circ \omega_{\loc}^j \circ s_{i}^{j}
\end{equation}
Let us define $\halpha_{j}^{\,i}$ on $\Omega^q_\tla(U_{ij},\kg)$ by
\begin{equation}
\label{eq-definitionalphahat}
\halpha_{j}^{\,i}(\omega_{\loc}^j) = \alpha^{i}_{j} \circ \omega_{\loc}^j \circ s_{i}^{j}.
\end{equation}

\begin{proposition}
\label{prop-relationtrivforms}
A family of local forms $\{\omega_{\loc}^i\}_{i \in I}$ with $\omega_{\loc}^i \in \Omega^\grast_\tla(U_i,\kg)$ is a system of trivializations of a global form $\omega \in \Omega^\grast(\lieA, \lieL)$ if and only if 
\begin{equation}
\label{eq-relationtrivforms}
\halpha_{j}^{\,i}(\omega_{\loc}^j) = \omega_{\loc}^i
\end{equation}
for any $i,j$ such that $U_{ij} \neq \ensvide$.
\end{proposition}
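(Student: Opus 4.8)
The plan is to read the statement as an elementary descent (gluing) result for sections of a vector bundle, with one direction being essentially tautological and the other a short cocycle computation followed by an appeal to the sheaf property of forms.

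\emph{The ``only if'' direction.} Suppose the family $\{\omega_{\loc}^i\}$ arises as the trivializations $\omega_{\loc}^i = \Psi_i^{-1} \circ \omega \circ S_i$ of a global form $\omega \in \Omega^q(\lieA, \lieL)$. Then the transformation law \eqref{eq-changelocaltrivializationforms} was already established just above the statement, and by the very definition \eqref{eq-definitionalphahat} of $\halpha_{j}^{\,i}$ this is exactly $\halpha_{j}^{\,i}(\omega_{\loc}^j) = \omega_{\loc}^i$. So this direction requires no further work.

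\emph{The ``if'' direction.} This is the substantial half. Given a family $\{\omega_{\loc}^i\}$ satisfying \eqref{eq-relationtrivforms}, I would build $\omega$ by gluing local pieces. On each $U_i$ set $\omega^i = \Psi_i \circ \omega_{\loc}^i \circ S_i^{-1}$, meaning $\omega^i(\kX_1, \dots, \kX_q) = \Psi_i\big(\omega_{\loc}^i(S_i^{-1}\kX_1, \dots, S_i^{-1}\kX_q)\big)$. Since $S_i$ and $\Psi_i$ are isomorphisms and $\omega_{\loc}^i$ is a $C^\infty(U_i)$-multilinear alternating form valued in $\kg$, each $\omega^i$ is a local section of $\Hom(\exter^q \varA, \varL)$ over $U_i$, that is, an element of $\Omega^q(\lieA_{U_i}, \lieL_{U_i})$. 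The key step is to check that these local forms agree on overlaps. On $U_{ij}$, substituting the hypothesis $\omega_{\loc}^i = \alpha^{i}_{j} \circ \omega_{\loc}^j \circ s_{i}^{j}$ into the definition of $\omega^i$ gives $\omega^i = \Psi_i \circ \alpha^{i}_{j} \circ \omega_{\loc}^j \circ s_{i}^{j} \circ S_i^{-1}$. Now I would unwind the two abbreviations: from $\alpha^{i}_{j} = \Psi_i^{-1} \circ \Psi_j$ one gets $\Psi_i \circ \alpha^{i}_{j} = \Psi_j$ on the value side, and from $s_{i}^{j} = S_j^{-1} \circ S_i$ one gets $s_{i}^{j} \circ S_i^{-1} = S_j^{-1}$ on the argument side. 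These two cancellations collapse the expression to $\omega^i = \Psi_j \circ \omega_{\loc}^j \circ S_j^{-1} = \omega^j$ on $U_{ij}$. Because forms in $\Omega^q(\lieA, \lieL)$ are sections of a vector bundle over $\varM$, the locally defined $\omega^i$ that agree on all nonempty overlaps glue uniquely to a global $\omega \in \Omega^q(\lieA, \lieL)$. It then remains only to note that the trivializations of this $\omega$ recover the initial data: $\Psi_i^{-1} \circ \omega \circ S_i = \Psi_i^{-1} \circ \omega^i \circ S_i = \omega_{\loc}^i$ by construction.

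\emph{Main obstacle.} The computation itself is pure bookkeeping; the only genuine point is conceptual. One must recognize that $\Omega^q(\lieA, \lieL)$ is a space of sections of a vector bundle, so that the patching condition ``$\omega^i = \omega^j$ on every $U_{ij}$'' suffices to produce a unique global section. Care is also needed with the placement of the isomorphisms $\Psi$ (acting on values in $\kg$) versus $S$ (acting on arguments in the algebroid): the cancellation $\Psi_i \circ \alpha^{i}_{j} = \Psi_j$ happens on one side while $s_{i}^{j} \circ S_i^{-1} = S_j^{-1}$ happens on the other, and keeping these straight is exactly what makes the two factors of the map $\halpha_{j}^{\,i}$ disappear simultaneously.
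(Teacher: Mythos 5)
Your proof is correct and takes essentially the same route as the paper, which states the proposition without a separate proof precisely because the ``only if'' direction is the computation that produces \eqref{eq-changelocaltrivializationforms} together with the definition \eqref{eq-definitionalphahat}, and the converse is the standard gluing of local sections of the bundle $\Hom(\exter^q \varA, \varL)$. Your two cancellations $\Psi_i \circ \alpha^{i}_{j} = \Psi_j$ (on values) and $s_{i}^{j} \circ S_i^{-1} = S_j^{-1}$ (on arguments) are exactly right and make the implicit argument explicit.
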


In the particular case of a $1$-form $\omega \in \Omega^1(\lieA, \lieL)$, one can write locally $\omega_\loc(X \oplus \gamma) = a(X) + b(\gamma)$ for any $X \oplus \gamma \in \tla(U, \kg)$, where $a \in \Omega^1(U)\otimes \kg$ and $b \in (\exter^1 \kg^\ast) \otimes \kg$. On $U_{ij}$, with $\omega_\loc^i =  a^i + b^i$ and $\omega_\loc^j =  a^j + b^j$ the local expressions of $\omega$ on $U_i$ and $U_j$ respectively, one has the transition formulas
\begin{align}
\label{eq-gluingrelationslocalexpressiononeform}
a^i &= \alpha^{i}_{j} \circ a^j - b^i \circ \chi_{ij}
&
b^i &= \alpha^{i}_{j} \circ b^j \circ \alpha^{j}_{i}
\end{align}

Recall that a connection on a transitive Lie algebroid $\lieA \xrightarrow{\rho} \Gamma(T \varM)$ is a splitting $\nabla : \Gamma(T \varM) \rightarrow \lieA$ as $C^\infty(\varM)$-modules of the short exact sequence
\begin{equation*}
\xymatrix@1@C=25pt{{\algzero} \ar[r] & {\lieL} \ar[r]^-{\iota} & {\lieA} \ar[r]_-{\rho} & {\Gamma(T \varM)} \ar[r] \ar@/_0.7pc/[l]_-{\nabla} & {\algzero}}
\end{equation*}
Then one can associate to $\nabla$ a $1$-form $\lfc^\nabla \in \Omega^1(\lieA, \lieL)$ uniquely defined by
\begin{equation*}
\kX = \nabla_X - \iota \circ \lfc^\nabla(\kX)
\end{equation*}
This $1$-form is normalized by $\lfc^\nabla \circ \iota(\ell) = -\ell$ for any $\ell \in \lieL$. In the following we will call it the connection $1$-form of the connection $\nabla$.

In a trivialization $(U, \Psi, \nabla^{0})$ of $\lieA$, the $1$-form $\lfc^\nabla$ can be trivialized as $\lfc^\nabla_{\loc}(X \oplus \gamma) = A(X) - \gamma$ with $A \in \Omega^1(U)\otimes \kg$. Notice that as an element of $\tla(U, \kg)$, one has $S^{-1}(\nabla_X) = X \oplus A(X)$.

With obvious notations, on $U_i \cap U_j \neq \ensvide$, we can use \eqref{eq-gluingrelationslocalexpressiononeform} with $b_i(\gamma) = - \gamma$ to get the gluing rules
\begin{equation}
\label{eq-gluingrelationlocalexpressionconnection}
A_i = \alpha^{i}_{j} \circ A_j + \chi_{ij}.
\end{equation}

Conversely, a family of $1$-forms $A_i \in \Omega^1(U_i)\otimes \kg$ satisfying \eqref{eq-gluingrelationlocalexpressionconnection} defines a connection on $\lieA$. As for local description of connection $1$-form on principal fiber bundle, the gluing relations require an inhomogeneous term. In order to get homogeneous relations, we should consider the local forms $\lfc^\nabla_{\loc, i}$ instead of the $A_i$'s.

\smallskip
Let us give general and useful properties of $\halpha_{j}^{\,i}$.

\begin{proposition}
\label{prop-localdifferentialcommutes}
One has
\begin{equation*}
\hd_\tla \omega_{\loc} = \Psi^{-1} \circ (\hd\omega) \circ S
\end{equation*}
and, on $U_{ij} = U_i \cap U_j \neq \ensvide$,
\begin{equation*}
\hd_\tla \omega_{\loc}^i =
\halpha_{j}^{\,i} \left(\hd_\tla \omega_{\loc}^j \right).
\end{equation*}

The map $\halpha_{j}^{\,i} : \Omega^\grast_\tla(U_{ij},\kg) \rightarrow \Omega^\grast_\tla(U_{ij},\kg)$ is an isomorphism of graded differential Lie algebras.
\end{proposition}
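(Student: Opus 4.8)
The plan is to prove the three assertions in sequence, since the second follows from the first together with Proposition~\ref{prop-relationtrivforms}, and the third is a bookkeeping statement assembling everything into the language of graded differential Lie algebras. First I would establish the intertwining identity $\hd_\tla \omega_{\loc} = \Psi^{-1} \circ (\hd\omega) \circ S$. The natural approach is to recall that $\hd$ on $\Omega^\grast(\lieA,\lieL)$ is the Chevalley--Eilenberg-type differential for the adjoint action of $\lieA$ on $\lieL$, and that $\hd_\tla = \dd + \ds'$ is the corresponding differential on the trivial model $\Omega^\grast_\tla(U,\kg)$. Since $S : \tla(U,\kg) \to \lieA_U$ is an isomorphism of Lie algebroids and $\Psi$ is an isomorphism of the coefficient bundles in Lie algebras intertwining the two adjoint representations (this is exactly the third compatibility condition $[\nabla^0_X, \iota\circ\Psi(\gamma)] = \iota\circ\Psi(X\cdotaction\gamma)$ in the definition of a local trivialization), the differential is transported by conjugation. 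Concretely I would write out the Koszul formula for $\hd\omega$ evaluated on sections $\kX_0,\dots,\kX_q$, substitute $\kX_k = S(X_k\oplus\gamma_k)$, apply $\Psi^{-1}$, and check term by term that the anchor-derivative terms reproduce $\dd$ while the bracket terms reproduce $\ds'$, using that $S$ preserves both the anchor and the bracket and that $\Psi$ conjugates $\ad$ on $\lieL$ to $\ad$ on $\kg$.

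Once the intertwining identity holds locally, the commutation with $\halpha_{j}^{\,i}$ on $U_{ij}$ is immediate. Indeed, $\hd\omega$ is a genuine global form in $\Omega^\grast(\lieA,\lieL)$, so by Proposition~\ref{prop-relationtrivforms} its local trivializations satisfy the gluing relation $\halpha_{j}^{\,i}\bigl((\hd\omega)_{\loc}^j\bigr) = (\hd\omega)_{\loc}^i$. Applying the first identity on $U_i$ and on $U_j$ identifies $(\hd\omega)_{\loc}^i$ with $\hd_\tla \omega_{\loc}^i$ and likewise on $U_j$, and substituting yields $\hd_\tla \omega_{\loc}^i = \halpha_{j}^{\,i}\bigl(\hd_\tla \omega_{\loc}^j\bigr)$ directly. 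This is the cleanest route and avoids re-deriving the gluing behaviour of $\halpha_{j}^{\,i}$ from scratch.

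For the final claim I would verify that $\halpha_{j}^{\,i}$, defined in \eqref{eq-definitionalphahat} by $\halpha_{j}^{\,i}(\omega_{\loc}^j) = \alpha^{i}_{j}\circ\omega_{\loc}^j\circ s_{i}^{j}$, respects all the structure. It is bijective because $\alpha^{i}_{j}\in\Aut(\kg)$ and $s_{i}^{j}$ are invertible with inverses $\alpha^{j}_{i}$ and $s_{j}^{i}$, and it is clearly degree-preserving and linear. Compatibility with the graded Lie bracket of $\Omega^\grast_\tla(U_{ij},\kg)$ follows because $\alpha^{i}_{j}$ is a Lie algebra automorphism of $\kg$, so post-composition preserves brackets of $\kg$-valued forms, while precomposition with the Lie algebroid isomorphism $s_{i}^{j}$ is compatible with the evaluation; one unwinds the definition of the bracket on $\Omega^\grast_\tla$ and checks it term by term. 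Finally, compatibility with the differential $\hd_\tla$ is precisely the second identity just proved. Collecting these facts shows $\halpha_{j}^{\,i}$ is an isomorphism of graded differential Lie algebras.

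The main obstacle is the first identity: transporting the Chevalley--Eilenberg differential through $S$ and $\Psi$ requires checking that $\Psi$ really does intertwine the two adjoint actions in the form needed by the Koszul formula, and that the splitting of $\hd_\tla$ into $\dd + \ds'$ matches the decomposition of $\hd\omega$ after trivialization. This is where the three defining conditions of a local trivialization $(U,\Psi,\nabla^0)$ must all be used, and it is the step most prone to sign or index errors in the Koszul expansion. Once that identity is secured, the remaining two parts are essentially formal consequences.
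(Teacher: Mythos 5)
Your argument is correct, but for the middle identity you take a genuinely different route from the paper. The paper proves $\hd_\tla \omega_{\loc}^i = \halpha_{j}^{\,i}(\hd_\tla \omega_{\loc}^j)$ by direct computation, the key ingredient being the intertwining relation
\begin{equation*}
X \cdotaction \alpha^{i}_{j}(\eta) + [\gamma^i, \alpha^{i}_{j}(\eta)] = \alpha^{i}_{j}\bigl(X \cdotaction \eta + [\gamma^j, \eta]\bigr),
\end{equation*}
which encodes how the change of trivialization conjugates the local adjoint action; this gives commutation of $\halpha_{j}^{\,i}$ with $\hd_\tla$ as an identity of maps on $\Omega^\grast_\tla(U_{ij},\kg)$. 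You instead deduce the middle identity formally: $\hd\omega$ is a global form, its trivializations glue by \eqref{eq-changelocaltrivializationforms} (the forward, tautological direction of Proposition~\ref{prop-relationtrivforms}, established before and independently, so there is no circularity), and the first identity converts $(\hd\omega)_{\loc}^i$ into $\hd_\tla \omega_{\loc}^i$. This is cleaner and pushes all the analytic content into the single Koszul-formula computation for the first identity, which you correctly identify as the crux and for which your use of the three defining conditions of $(U,\Psi,\nabla^0)$ is exactly what is needed. What the paper's direct computation buys is the commutation statement for \emph{arbitrary} elements of $\Omega^\grast_\tla(U_{ij},\kg)$, not only for restrictions of trivializations of global forms on $\lieA$.

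That distinction is the one small lacuna in your write-up: for the third assertion you invoke ``the second identity just proved'' to get compatibility of $\halpha_{j}^{\,i}$ with $\hd_\tla$, but as you proved it, that identity applies only to families trivializing a global form, whereas the isomorphism claim (and its later use, e.g.\ for the presheaf restriction maps \eqref{eq-defiUV} in Section~\ref{sec-cohomology}, where local forms are explicitly \emph{not} trivializations of global forms) requires it on all of $\Omega^\grast_\tla(U_{ij},\kg)$. The gap closes in one line: any $\eta \in \Omega^\grast_\tla(U_{ij},\kg)$ is the $j$-trivialization of the global form $\Psi_j \circ \eta \circ S_j^{-1}$ on the restricted transitive Lie algebroid $\lieA_{U_{ij}}$, so your formal argument, run over the base $U_{ij}$ with its two trivializations, yields $\hd_\tla\, \halpha_{j}^{\,i}(\eta) = \halpha_{j}^{\,i}(\hd_\tla \eta)$ in full generality. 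With that remark added, your verification of bijectivity (inverses $\alpha^{j}_{i}$ and $s_{j}^{i}$) and of bracket compatibility (post-composition by the Lie algebra automorphism $\alpha^{i}_{j}$, precomposition by the Lie algebroid isomorphism $s_{i}^{j}$) completes the third assertion.
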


\begin{proof}
These are straightforward computations using the definitions and the properties of $\hd$, $\hd_\tla$, $\Psi$, $\nabla^0$ and $S$. For the second relation, one needs the easy to establish relation
\begin{equation*}
X \cdotaction \alpha^{i}_{j}(\eta) + [ \gamma^i, \alpha^{i}_{j}(\eta)] = \alpha^{i}_{j}(X \cdotaction \eta + [\gamma^j, \eta]) 
\end{equation*}
for any $\eta \in \Gamma(U_{ij} \times \kg)$.
\end{proof}

For the case of $C^\infty(\varM)$-valued forms, we shall also denote by $\halpha_{j}^{\,i} : \Omega^\grast_\tla(U_{ij}) \rightarrow \Omega^\grast_\tla(U_{ij})$ the isomorphism of differential graded algebras $\halpha_{j}^{\,i}(\omega^j_{\loc}) = \omega^j_{\loc} \circ s_{i}^{j}$ performing the change of trivializations for local expressions of forms in $\Omega^\grast(\lieA)$ which are defined in the same way as in Definition~\ref{def-traivializationofforms} without the need for the composition with $\Psi^{-1}$. Local descriptions of forms in $\Omega^\grast(\lieA)$ were presented in \cite{Kuba96a}.

\section{Metrics and integration}
\label{sec-metric-integration}

The local descriptions of forms given in Proposition~\ref{prop-relationtrivforms} is used to define metrics on the transitive Lie algebroids $\lieA$ and then to define a natural notion of integration along the ``inner structure'' $\lieL$.

\subsection{Metrics}
\label{subsec-metrics}

\begin{definition}
Let $\lieA$ be a Lie algebroid over the manifold $\varM$. A metric on $\lieA$ is a symmetric, $C^\infty(\varM)$-linear map 
\begin{equation*}
\hg : \lieA \otimes_{C^\infty(\varM)} \lieA \rightarrow C^\infty(\varM)
\end{equation*}
\end{definition}

This definition does not require $\lieA$ to be transitive. But in the following, we shall study such a metric in the transitive case.

\begin{proposition}
\label{prop-constructionsaroundmetrics}
Let $\lieA \xrightarrow{\rho} \Gamma(T\varM)$ be a transitive Lie algebroid with kernel $\lieL$. 

A metric $\hg$ on $\lieA$ defines a metric $h = \iota^\ast \hg : \lieL \otimes_{C^\infty(\varM)} \lieL \rightarrow C^\infty(\varM)$ on the vector bundle $\varL$ for which $\lieL$ is the space of sections. Explicitly one has
\begin{equation*}
h( \gamma, \eta) = \hg(\iota(\gamma), \iota(\eta))
\end{equation*}
for any $\gamma, \eta \in \lieL$. We will call this metric the inner part of $\hg$.

Let $g$ be an ordinary metric on the manifold $\varM$. Then it defines a canonical metric $\hg = \rho^\ast g : \lieA \otimes_{C^\infty(\varM)} \lieA \rightarrow C^\infty(\varM)$ on $\lieA$ by the relation
\begin{equation*}
\hg(\kX, \kY) = g (\rho(\kX), \rho(\kY))
\end{equation*}
for any $\kX, \kY \in \lieA$.

Let $h : \lieL \otimes_{C^\infty(\varM)} \lieL \rightarrow C^\infty(\varM)$ be a metric on $\varL$ and let $\nabla$ be a connection on $\lieA$. Denote by $\lfc^\nabla \in \Omega^1(\lieA, \lieL)$ its associated connection $1$-form. Then the couple $(h, \nabla)$ defines a metric $\hg = {\lfc^\nabla}^\ast h : \lieA \otimes_{C^\infty(\varM)} \lieA \rightarrow C^\infty(\varM)$ on $\lieA$ by the relation
\begin{equation*}
\hg(\kX, \kY) = h (\lfc^\nabla(\kX), \lfc^\nabla(\kY))
\end{equation*}
This metric satisfies $h = \iota^\ast \hg$.
\end{proposition}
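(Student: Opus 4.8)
The plan is to treat all three statements uniformly, observing that each candidate metric is obtained by pulling back a symmetric $C^\infty(\varM)$-bilinear form along a $C^\infty(\varM)$-linear map, and that the pullback of such a form is again symmetric and $C^\infty(\varM)$-bilinear. For each construction I would therefore verify only two things: that the relevant map ($\iota$, $\rho$, or $\lfc^\nabla$) is $C^\infty(\varM)$-linear, and that symmetry of the source form is inherited. Well-definedness of each metric on the tensor product $\lieA \otimes_{C^\infty(\varM)} \lieA$ (resp. $\lieL \otimes_{C^\infty(\varM)} \lieL$) is then automatic from $C^\infty(\varM)$-bilinearity, so no separate argument is needed for it.

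For the inner part $h = \iota^\ast \hg$, I would use that $\iota$ is a morphism of $C^\infty(\varM)$-modules, so that $h(f\gamma, \eta) = \hg(\iota(f\gamma), \iota(\eta)) = f\,\hg(\iota(\gamma),\iota(\eta)) = f\, h(\gamma, \eta)$, while symmetry of $h$ follows immediately from symmetry of $\hg$. For the canonical metric $\hg = \rho^\ast g$, the anchor $\rho$ is $C^\infty(\varM)$-linear by Definition~\ref{def-liealgebroidalgebraic}, so the same two checks transfer symmetry and bilinearity of $g$ to $\hg$.

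For the last construction $\hg = {\lfc^\nabla}^\ast h$, I would first note that $\lfc^\nabla \in \Omega^1(\lieA, \lieL)$ is by its very nature $C^\infty(\varM)$-linear, whence $\hg(\kX, \kY) = h(\lfc^\nabla(\kX), \lfc^\nabla(\kY))$ is symmetric and $C^\infty(\varM)$-bilinear. The one computation requiring care is the identity $h = \iota^\ast \hg$. Here I would invoke the normalization $\lfc^\nabla \circ \iota(\ell) = -\ell$ recalled just above the proposition: for $\gamma, \eta \in \lieL$,
\[
(\iota^\ast \hg)(\gamma, \eta) = \hg(\iota(\gamma), \iota(\eta)) = h\bigl(\lfc^\nabla(\iota(\gamma)), \lfc^\nabla(\iota(\eta))\bigr) = h(-\gamma, -\eta) = h(\gamma, \eta),
\]
the two signs cancelling by bilinearity of $h$.

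I do not expect a genuine obstacle: every remaining verification is a direct consequence of the module-morphism properties of $\iota$, $\rho$ and $\lfc^\nabla$. The only point with any subtlety is the last one, and even there the difficulty is purely notational, namely keeping track of the sign in the normalization of the connection $1$-form and observing that it enters quadratically and hence disappears.
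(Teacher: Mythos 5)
Your proposal is correct, and it is essentially a fleshed-out version of the paper's own argument, which simply remarks that the claims are ``direct applications of the definitions'': the pullback observation for $\iota$, $\rho$ and $\lfc^\nabla$, and the sign-cancellation via the normalization $\lfc^\nabla \circ \iota(\ell) = -\ell$, are exactly the verifications the paper leaves implicit.
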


\begin{proof}
These claims are just direct applications of the definitions and the properties of the objects involved in the relations.
\end{proof}

The metric $\hg = \rho^\ast g$ vanishes on $\iota(\lieL)$ and the metric $\hg = {\lfc^\nabla}^\ast h$ vanishes on the image of $\nabla$ in $\lieA$. In the following we will introduce a kind of notion of non degeneracy in order to get rid of such metrics.

\begin{definition}
A metric $\hg$ on $\lieA$ is inner non degenerate if its inner metric $h = \iota^\ast \hg$ is non degenerate on $\lieL$, \textit{i.e.} if it is non degenerate as a metric on $\varL$.
\end{definition}

The constructions given in Proposition~\ref{prop-constructionsaroundmetrics} help us to decompose any metric on $\lieA$ into ``smaller'' entities. 

\begin{proposition}
\label{prop-connectionassociatedtoinnernondegeneratemetric}
Let $\hg$ be a inner non degenerate metric on $\lieA$. Then there exists a unique connection $\nabla^{\hg}$ on $\lieA$ such that, for any $X \in \Gamma(T\varM)$ and any $\gamma \in \lieL$, 
\begin{equation}
\label{eq-blockdiagmetricconnection}
\hg(\nabla^\hg_X, \iota(\gamma)) = 0
\end{equation}
\end{proposition}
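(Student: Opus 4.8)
The plan is to build the desired connection $\nabla^{\hg}$ directly from the non-degeneracy of the inner metric $h = \iota^\ast \hg$. The defining condition \eqref{eq-blockdiagmetricconnection} says that the image of $\nabla^{\hg}$ in $\lieA$ must be $\hg$-orthogonal to $\iota(\lieL)$. The natural strategy is therefore to construct, for each $X \in \Gamma(T\varM)$, the unique element of $\lieA$ that projects onto $X$ under $\rho$ and is $\hg$-orthogonal to the kernel, and then to check that $X \mapsto \nabla^{\hg}_X$ is $C^\infty(\varM)$-linear (hence a genuine connection, being a splitting of the short exact sequence).

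First I would fix any auxiliary connection $\nabla$ on $\lieA$ (which exists since $\lieA$ is a finite projective module, so the short exact sequence splits). Any other splitting lifting $X$ has the form $\nabla_X + \iota(\gamma_X)$ for some $\gamma_X \in \lieL$ depending $C^\infty(\varM)$-linearly on $X$, and the orthogonality requirement \eqref{eq-blockdiagmetricconnection} becomes, for all $\eta \in \lieL$,
\begin{equation*}
\hg\bigl(\nabla_X + \iota(\gamma_X),\, \iota(\eta)\bigr) = 0,
\end{equation*}
i.e. $h(\gamma_X, \eta) = -\,\hg(\nabla_X, \iota(\eta))$ using $h = \iota^\ast \hg$. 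The right-hand side is a $C^\infty(\varM)$-linear functional of $\eta \in \lieL$, so by non-degeneracy of $h$ on the bundle $\varL$ it is represented by a unique section $\gamma_X \in \lieL$; explicitly $\gamma_X = -\,h^{-1}\bigl(\hg(\nabla_X, \iota(-))\bigr)$, where $h^{-1}$ is the musical isomorphism $\varL^\ast \xrightarrow{\simeq} \varL$ afforded by the non-degenerate $h$. Setting $\nabla^{\hg}_X := \nabla_X + \iota(\gamma_X)$ then gives the candidate connection, and $\rho \circ \nabla^{\hg}_X = \rho(\nabla_X) = X$ since $\iota(\lieL) = \ker\rho$.

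It remains to verify that this $\nabla^{\hg}$ is a bona fide connection and that it is the unique one satisfying \eqref{eq-blockdiagmetricconnection}. The $C^\infty(\varM)$-linearity in $X$ follows because $\nabla_X$, the pairing $\hg(\nabla_X, \iota(-))$, and the inverse $h^{-1}$ are all $C^\infty(\varM)$-linear; hence $\nabla^{\hg}$ is a splitting of the exact sequence as $C^\infty(\varM)$-modules, which is precisely the definition of a connection recalled in the excerpt. Uniqueness is immediate from the construction: any connection $\nabla'$ obeying \eqref{eq-blockdiagmetricconnection} must, when written as $\nabla_X + \iota(\gamma'_X)$, satisfy the same linear equation $h(\gamma'_X, \eta) = -\,\hg(\nabla_X, \iota(\eta))$ for all $\eta$, and non-degeneracy forces $\gamma'_X = \gamma_X$.

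The only delicate point, and the step I expect to carry the real content, is the invertibility argument: one must know that a non-degenerate $C^\infty(\varM)$-bilinear form $h$ on the finite projective module $\lieL = \Gamma(\varL)$ induces a module isomorphism $\lieL \xrightarrow{\simeq} \Hom_{C^\infty(\varM)}(\lieL, C^\infty(\varM))$, so that the functional $\eta \mapsto -\,\hg(\nabla_X, \iota(\eta))$ is indeed represented by a unique section. This is the fibrewise musical isomorphism of the vector bundle $\varL$ equipped with the non-degenerate fibre metric $h$, and it is exactly where the hypothesis of \emph{inner} non-degeneracy is used; the rest of the argument is formal once this representability is in hand.
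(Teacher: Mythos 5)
Your proof is correct, but it takes a genuinely different route from the paper's. The paper works entirely in local trivializations: it decomposes $\hg_\loc$ into components $g_\loc$, $\hg^{\text{mix}}_\loc$, $h_\loc$, solves $h_{\loc\, i}(A_i(X), \gamma^i) = - \hg^{\text{mix}}_{\loc\, i}(X, \gamma^i)$ for local $1$-forms $A_i \in \Omega^1(U_i)\otimes\kg$ using the non-degeneracy of $h_{\loc\, i}$, and then verifies by a chain of identities that the $A_i$ obey the inhomogeneous gluing rule \eqref{eq-gluingrelationlocalexpressionconnection}, so that they assemble into a global connection; your argument instead fixes one global auxiliary splitting $\nabla$, reduces \eqref{eq-blockdiagmetricconnection} to the representability of the functional $\eta \mapsto -\hg(\nabla_X,\iota(\eta))$ via the musical isomorphism $\lieL \xrightarrow{\simeq} \Hom_{C^\infty(\varM)}(\lieL, C^\infty(\varM))$ induced by the fibrewise non-degenerate $h$ on $\varL$. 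Your global argument is shorter and coordinate-free, and you correctly identify the musical isomorphism as the step where inner non-degeneracy (fibrewise, as the paper's definition demands) is used; the paper's local computation is heavier but buys the explicit gluing relations for the components of $\hg_\loc$ and the local potentials $A_i$, which are reused immediately afterwards (mixed basis, Proposition~\ref{prop-tripleforinnernondegeneratemetric}) and which fit the paper's overall programme of local descriptions. The uniqueness arguments are essentially identical in the two proofs. One small repair: the existence of the auxiliary splitting does not follow from $\lieA$ being finite projective (projectivity of the middle term of a short exact sequence does not imply splitting); it follows from the projectivity of the quotient $\Gamma(T\varM)$ over $C^\infty(\varM)$, or equivalently from the fact that every short exact sequence of vector bundles splits.
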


\begin{proof}
Let $(U, \Psi, \nabla^{0})$ be a local trivialization of $\lieA$. Then one defines
\begin{equation*}
\hg_\loc(X \oplus \gamma, Y \oplus \eta) = \hg ( S(X \oplus \gamma), S(Y \oplus \eta))
\end{equation*}
for any $X \oplus \gamma, Y \oplus \eta \in \tla(U, \kg)$. One can introduce the following components of $\hg_\loc$:
\begin{align*}
g_\loc(X,Y) &= \hg_\loc(X, Y),
&
\hg^{\text{mix}}_\loc(X, \eta) &= \hg_\loc(X, \eta),
&
h_\loc(\gamma, \eta) &= \hg_\loc(\gamma, \eta),
\end{align*}
so that one has the decomposition
\begin{equation*}
\hg_\loc(X \oplus \gamma, Y \oplus \eta) = g_\loc(X,Y) + \hg^{\text{mix}}_\loc(X, \eta) + \hg^{\text{mix}}_\loc(Y, \gamma) + h(\gamma, \eta).
\end{equation*}

Using obvious notation, on $U_i \cap U_j \neq \ensvide$ one can easily establish the relations:
\begin{align*}
g_{\loc\, j}(X,Y) &= g_{\loc\, i}(X,Y) + \hg^{\text{mix}}_{\loc\, i}(X, \chi_{ij}(Y)) + \hg^{\text{mix}}_{\loc\, i}(Y, \chi_{ij}(X)) \\
& \hspace{15em}+ h_{\loc\, i}(\chi_{ij}(X), \chi_{ij}(Y))
\\[2mm]
\hg^{\text{mix}}_{\loc\, j}(X, \eta^j) &= \hg^{\text{mix}}_{\loc\, i}(X, \alpha^{i}_{j}(\eta^j)) + h_{\loc\, i}(\chi_{ij}(X), \alpha^{i}_{j}(\eta^j))
\\[2mm]
h_{\loc\, j}(\gamma^j, \eta^j) &= h_{\loc\, i}(\alpha^{i}_{j}(\gamma^j), \alpha^{i}_{j}(\eta^j)).
\end{align*}
Notice that the last line of these relations expresses the gluing relations for the local expressions of the inner metric $h = \iota^\ast \hg$.

Suppose the connection $\nabla^\hg$ on $\lieA$ which solves \eqref{eq-blockdiagmetricconnection} exists. It can be given locally by a family of $1$-forms $A_i \in \Omega^1(U_i)\otimes \kg$ which should satisfy \eqref{eq-gluingrelationlocalexpressionconnection}. These $1$-forms must solve the local expressions of the relation $\hg(\nabla^\hg_X, \iota(\gamma)) = 0$, which are:
\begin{equation*}
0 = \hg_{\loc\, i}(X \oplus A_i(X), 0 \oplus \gamma^i) = \hg^{\text{mix}}_{\loc\, i}(X, \gamma^i) + h_{\loc\, i}(A_i(X), \gamma^i).
\end{equation*}
Since $h_{\loc\, i}$ is non degenerate, on every $U_i$ there is a local $1$-form $A_i \in \Omega^1(U_i)\otimes \kg$ such that $h_{\loc\, i}(A_i(X), \gamma^i) = - \hg^{\text{mix}}_{\loc\, i}(X, \gamma^i)$. 

Let us now take a look at what happens on $U_i \cap U_j \neq \ensvide$. First, in this situation, the $\gamma^i$'s define a global object in $\lieL$, so that $\gamma^i = \alpha^{i}_{j}(\gamma^j)$. Then we have
\begin{align*}
h_{\loc\, i}(\alpha^{i}_{j} \circ A_j(X), \gamma^i) 
&= h_{\loc\, i}(\alpha^{i}_{j} \circ A_j(X), \alpha^{i}_{j}(\gamma^j))
= h_{\loc\, j}(A_j(X), \gamma^j)
\\
&= - \hg^{\text{mix}}_{\loc\, j}(X, \gamma^j)
\\
&= - \hg^{\text{mix}}_{\loc\, i}(X, \alpha^{i}_{j}(\gamma^j)) - h_{\loc\, i}(\chi_{ij}(X), \alpha^{i}_{j}(\gamma^j))
\\
&= - \hg^{\text{mix}}_{\loc\, i}(X, \gamma^i) - h_{\loc\, i}(\chi_{ij}(X), \gamma^i)
\\
&= h_{\loc\, i}(A_i(X), \gamma^i) - h_{\loc\, i}(\chi_{ij}(X), \gamma^i).
\end{align*}
Using again the non degeneracy of $h_{\loc\, i}$, one gets the relation $A_i = \alpha^{i}_{j} \circ A_j + \chi_{ij}$ which is \eqref{eq-gluingrelationlocalexpressionconnection}. The connection $\nabla^\hg$ then exists.

If $\nabla$ is another connection with the same property then for any $X \in \Gamma(T\varM)$ one has $\nabla^\hg_X - \nabla_X \in \iota(\lieL)$. This means that there exists $\gamma(X) \in \lieL$ such that $\nabla^\hg_X - \nabla_X = \iota\circ \gamma(X)$. Then one has $0 = \hg( \nabla^\hg_X - \nabla_X, \iota(\eta)) = \iota^\ast\hg(\gamma(X), \eta)$ for any $\eta \in \lieL$. Since the inner metric is non degenerate then $\gamma(X) = 0$. The connection is then unique.
\end{proof}

\begin{proposition}
\label{prop-tripleforinnernondegeneratemetric}
An inner non degenerate metric $\hg$ on $\lieA$ is equivalent to a triple $(g, h, \nabla)$ where $g$ is a metric on $\varM$, $h$ is a non degenerate metric on $\lieL$ and $\nabla$ is a connection on $\lieA$. 
The metric $\hg$ and the triple $(g, h, \nabla)$ are related by:
\begin{equation}
\label{eq-hatggha}
\hg(\kX, \kY) = g(\rho(\kX), \rho(\kY)) + h( \lfc^\nabla(\kX), \lfc^\nabla(\kY))
\end{equation}
where $\lfc^\nabla$ is the connection $1$-form associated to $\nabla$.
\end{proposition}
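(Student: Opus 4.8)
The plan is to read the statement as a bijective correspondence and to prove it by exhibiting the two maps $(g,h,\nabla)\mapsto\hg$ and $\hg\mapsto(g,h,\nabla)$ and checking that they are mutually inverse, the formula \eqref{eq-hatggha} being the explicit form of the first map.

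First I would treat the direction $(g,h,\nabla)\mapsto\hg$. Given a triple, I define $\hg$ by \eqref{eq-hatggha}. Symmetry and $C^\infty(\varM)$-linearity are inherited from those of $g$ and $h$ together with the $C^\infty(\varM)$-linearity of $\rho$ and of $\lfc^\nabla$, so $\hg$ is a metric on $\lieA$. To see that it is inner non degenerate I compute $\iota^\ast\hg$: since $\rho\circ\iota = 0$ the first term of \eqref{eq-hatggha} drops out, and since $\lfc^\nabla\circ\iota = -\Id$ (the normalization of the connection $1$-form) the second term gives $h(-\gamma,-\eta) = h(\gamma,\eta)$. Hence $\iota^\ast\hg = h$, which is non degenerate by hypothesis; this also shows that $h$ is recovered as the inner part of $\hg$ in the sense of Proposition~\ref{prop-constructionsaroundmetrics}.

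Next, the direction $\hg\mapsto(g,h,\nabla)$. To an inner non degenerate $\hg$ I attach $h = \iota^\ast\hg$, the connection $\nabla = \nabla^\hg$ furnished (and shown unique) by Proposition~\ref{prop-connectionassociatedtoinnernondegeneratemetric}, and the symmetric bilinear form $g(X,Y) = \hg(\nabla^\hg_X,\nabla^\hg_Y)$ on $\Gamma(T\varM)$; this $g$ is well defined and $C^\infty(\varM)$-bilinear because $\nabla^\hg$ is a $C^\infty(\varM)$-linear splitting of $\rho$. To recover \eqref{eq-hatggha} I use the decomposition $\kX = \nabla^\hg_{\rho(\kX)} - \iota\circ\lfc^\nabla(\kX)$ attached to the connection $1$-form, expand $\hg(\kX,\kY)$ by bilinearity into four terms, and kill the two mixed terms with the block condition \eqref{eq-blockdiagmetricconnection}, $\hg(\nabla^\hg_X,\iota(\gamma)) = 0$ (and its symmetric partner). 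The surviving diagonal terms are exactly $g(\rho(\kX),\rho(\kY))$ and $\iota^\ast\hg(\lfc^\nabla(\kX),\lfc^\nabla(\kY)) = h(\lfc^\nabla(\kX),\lfc^\nabla(\kY))$, which is \eqref{eq-hatggha}.

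Finally I would check that the two constructions are mutually inverse. Going from a triple to $\hg$ and back, the built $\hg$ satisfies $\hg(\nabla_X,\iota(\gamma)) = 0$ --- here one uses $\rho\circ\iota = 0$ together with $\lfc^\nabla(\nabla_X) = 0$, the latter coming from $\iota\circ\lfc^\nabla(\nabla_X)=0$, the defining relation of $\lfc^\nabla$ applied to $\kX = \nabla_X$ --- so by the uniqueness in Proposition~\ref{prop-connectionassociatedtoinnernondegeneratemetric} the extracted connection is the original $\nabla$, and then $h$ and $g$ are recovered as above; going from $\hg$ to a triple and back is precisely the identity \eqref{eq-hatggha} just established. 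The substantive input is entirely Proposition~\ref{prop-connectionassociatedtoinnernondegeneratemetric}, which supplies the distinguished connection that block-diagonalizes $\hg$; once the splitting $\lieA = \nabla^\hg(\Gamma(T\varM))\oplus\iota(\lieL)$ is in hand the remainder is bilinear bookkeeping resting on the identities $\rho\circ\iota = 0$, $\lfc^\nabla\circ\iota = -\Id$, $\lfc^\nabla\circ\nabla^\hg = 0$ and $\kX = \nabla^\hg_{\rho(\kX)} - \iota\circ\lfc^\nabla(\kX)$. The only point requiring a little care, and the closest thing to an obstacle, is confirming that the mixed block genuinely vanishes for the right connection, i.e.\ that the $\nabla$ one feeds into \eqref{eq-hatggha} is the very connection that \eqref{eq-blockdiagmetricconnection} would extract.
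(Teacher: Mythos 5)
Your proposal is correct and follows essentially the same route as the paper: both directions hinge on Proposition~\ref{prop-connectionassociatedtoinnernondegeneratemetric} for the unique block-diagonalizing connection $\nabla^\hg$, followed by the same four-term expansion of $\hg(\kX,\kY)$ via $\kX = \nabla_X - \iota\circ\lfc^\nabla(\kX)$ with the mixed terms killed by \eqref{eq-blockdiagmetricconnection}. Your only addition is to verify explicitly that the two constructions are mutually inverse (using $\lfc^\nabla\circ\nabla = 0$ and the uniqueness clause), a point the paper's proof leaves implicit in calling the forward direction obvious; this is a sound and slightly more complete write-up, not a different argument.
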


\begin{proof}
It is obvious that such a triple defines a inner non degenerate metric $\hg$ by the proposed relation.

In the opposite direction, Proposition~\ref{prop-connectionassociatedtoinnernondegeneratemetric} defines a unique connection $\nabla$ associated to $\hg$ satisfying \eqref{eq-blockdiagmetricconnection}. Using $\kX = \nabla_X - \iota \circ \lfc^\nabla(\kX)$ with $X = \rho(\kX)$ and $\lfc^\nabla$ the connection $1$-form associated to $\nabla$, one has
\begin{multline*}
\hg(\kX, \kY) =\\
 \hg(\nabla_X, \nabla_Y) - \hg(\iota \circ \lfc^\nabla(\kX), \nabla_Y) - \hg(\nabla_X, \iota \circ \lfc^\nabla(\kY)) + \hg(\iota \circ \lfc^\nabla(\kX), \iota \circ \lfc^\nabla(\kY)).
\end{multline*}
The two terms in the middle vanish by construction of $\nabla$. Define now
\begin{align*}
g(X,Y) &= \hg(\nabla_X, \nabla_Y),
&
h(\gamma, \eta) &= \hg(\iota(\gamma), \iota(\eta)).
\end{align*}
The triple $(g, h, \nabla)$ satisfies the requirements. Notice that the inner metric $h$ in this construction is exactly $h = \iota^\ast \hg$.
\end{proof}

\subsection{Mixed local basis of forms}

Let $\{E_a\}_{1 \leq a \leq n}$ be a basis of the $n$-dimensional Lie algebra $\kg$, and let $\{\theta^a\}_{1 \leq a \leq n}$ be its dual basis. Let $(U, \Psi, \nabla^{0})$ be a local trivialization of $\lieA$. Let $\nabla$ be a connection on $\lieA$. Then its connection $1$-form $\lfc^\nabla$ has a local expression $\lfc^\nabla_\loc = (A^a - \theta^a) E_a$ (summation over $a$ is understood), where $A \in \Omega^1(U)\otimes \kg$ has been defined before. Let us introduce the notation 
\begin{equation*}
\lfc^a = A^a - \theta^a \in \Omega^1_\tla(U).
\end{equation*}

\begin{definition}
\label{def-mixedbasis}
The local $1$-forms $\lfc^a$ on $U$ are called the mixed basis on the inner part of $\Omega^1_\tla(U)$ relative to the connection $\nabla$ and to the basis $\{E_a\}_{1 \leq a \leq n}$ of $\kg$.
\end{definition}

Let $\omega \in \Omega^q(\lieA, \lieL)$ and denote by $\omega_{\loc} \in \Omega^q_\tla(U,\kg)$ its trivialization over $U$. Then one has
\begin{equation*}
\omega_{\loc} = \sum_{r+s=q} \omega^{\theta\; (r,s)}_{\mu_1 \ldots \mu_r a_1 \ldots a_s} \dd x^{\mu_1} \ordwedge \cdots \ordwedge \dd x^{\mu_r} \ordwedge \theta^{a_1} \ordwedge \cdots \ordwedge \theta^{a_s}
\end{equation*}
with $\omega^{\theta}_{\mu_1 \ldots \mu_r a_1 \ldots a_s} : U \rightarrow \kg$.
Using $\theta^a = A^a - \lfc^a$, this expression can be written as
\begin{equation*}
\omega_{\loc} = \sum_{r+s=q} \omega^{(r,s)}_{\mu_1 \ldots \mu_r a_1 \ldots a_s} \dd x^{\mu_1} \ordwedge \cdots \ordwedge \dd x^{\mu_r} \ordwedge \lfc^{a_1} \ordwedge \cdots \ordwedge \lfc^{a_s}
\end{equation*}
for some new components $\omega^{(r,s)}_{\mu_1 \ldots \mu_r a_1 \ldots a_s} : U \rightarrow \kg$ which are polynomials in the $A^a_\mu$'s for $A^a = A^a_\mu \dd x^{\mu}$.

We shall be now interested in the gluing rules of these components according to two trivializations of $\lieA$.

\begin{proposition}
Let us introduce the matrix valued functions $G^{i}_{j}= \left({G^{i}_{j}}^b_a\right)_{1 \leq a,b \leq n}$ on $U_{ij}=U_i \cap U_j \neq \ensvide$ defined by $\alpha^{i}_{j}(E_a) = {G^{i}_{j}}^b_a E_b$ (summation over $b$). With obvious notations, on $U_{ij}$ one has 
\begin{equation*}
\lfc_i^a = {G^{i}_{j}}^a_b \lfc_j^b \circ s_{i}^{j}
\end{equation*}
where $s_{i}^{j} = S_j^{-1} \circ S_i : \tla(U_{ij}, \kg) \xrightarrow{\simeq} \tla(U_{ij}, \kg)$.
\end{proposition}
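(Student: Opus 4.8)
The plan is to exploit the fact that the connection $1$-form $\lfc^\nabla$ is a \emph{global} object in $\Omega^1(\lieA, \lieL)$, so that its local trivializations are already controlled by Proposition~\ref{prop-relationtrivforms}. Concretely, $\lfc^\nabla$ is defined intrinsically through $\kX = \nabla_X - \iota \circ \lfc^\nabla(\kX)$, hence it belongs to $\Omega^1(\lieA, \lieL)$ and its family of trivializations $\lfc^\nabla_{\loc, i} \in \Omega^1_\tla(U_i, \kg)$ must satisfy the compatibility relation \eqref{eq-relationtrivforms}. First I would therefore write, on $U_{ij} \neq \ensvide$,
\begin{equation*}
\lfc^\nabla_{\loc, i} = \halpha_{j}^{\,i}\!\left(\lfc^\nabla_{\loc, j}\right) = \alpha^{i}_{j} \circ \lfc^\nabla_{\loc, j} \circ s_{i}^{j},
\end{equation*}
using the definition \eqref{eq-definitionalphahat} of $\halpha_{j}^{\,i}$. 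This single relation for the whole $1$-form already encodes the desired homogeneous gluing; the remaining work is purely to unpack it in the basis $\{E_a\}$.

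Next I would substitute the basis expansions $\lfc^\nabla_{\loc, i} = \lfc_i^a E_a$ and $\lfc^\nabla_{\loc, j} = \lfc_j^b E_b$. Each coefficient $\lfc_j^b$ lies in $\Omega^1_\tla(U_{ij})$, i.e. it is valued in functions, while the only $\kg$-valued factor is the constant element $E_b$. Since $\alpha^{i}_{j}$ acts pointwise as an automorphism of $\kg$ and leaves the scalar coefficients untouched, whereas precomposition with $s_{i}^{j}$ acts only on the algebroid arguments, pulling these maps through the sum gives
\begin{equation*}
\alpha^{i}_{j} \circ \left(\lfc_j^b E_b\right) \circ s_{i}^{j} = \left(\lfc_j^b \circ s_{i}^{j}\right) \alpha^{i}_{j}(E_b).
\end{equation*}

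Finally I would invoke the definition $\alpha^{i}_{j}(E_b) = {G^{i}_{j}}^a_b E_a$ of the matrix $G^{i}_{j}$ to rewrite the right-hand side as $\left(\lfc_j^b \circ s_{i}^{j}\right) {G^{i}_{j}}^a_b E_a$, and then match the coefficients of the basis $\{E_a\}$ against the left-hand side $\lfc_i^a E_a$, obtaining $\lfc_i^a = {G^{i}_{j}}^a_b\, \lfc_j^b \circ s_{i}^{j}$ as claimed. There is no genuine analytic difficulty here; the only point requiring care is the index bookkeeping of which factor each map acts on, namely keeping $\alpha^{i}_{j}$ acting on the $\kg$-label $E_b$ and $s_{i}^{j}$ acting on the arguments, together with the correct recognition that $\lfc^\nabla$ is global so that Proposition~\ref{prop-relationtrivforms} applies verbatim. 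This also explains, contrary to the inhomogeneous rule \eqref{eq-gluingrelationlocalexpressionconnection} for the $A_i$ alone, why the mixed basis $\lfc^a$ transforms homogeneously: the Lie algebroid isomorphism $s_{i}^{j}$ absorbs the inhomogeneous shift produced by $\chi_{ij}$.
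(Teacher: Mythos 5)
Your proposal is correct and takes essentially the same route as the paper: its one-line proof (``a direct consequence of \eqref{eq-changelocaltrivializationforms}'') is exactly your argument of applying the change-of-trivialization formula to the global connection $1$-form $\lfc^\nabla_{\loc} = \lfc^a E_a$ and expanding in the basis $\{E_a\}$ using the pointwise $C^\infty(U_{ij})$-linearity of $\alpha^{i}_{j}$ and the definition of ${G^{i}_{j}}^a_b$. Your closing observation that the base-preserving isomorphism $s_{i}^{j}$ absorbs the inhomogeneous $\chi_{ij}$-shift of \eqref{eq-gluingrelationlocalexpressionconnection} is also accurate.
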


\begin{proof}
This is a direct consequence of \eqref{eq-changelocaltrivializationforms}.
\end{proof}

Notice that this relation can also be written as 
\begin{equation}
\label{eq-relationtriva}
\halpha_{i}^{\,j} (\lfc_i^a) = {G^{i}_{j}}^a_b \lfc_j^b
\end{equation}
where $\halpha_{i}^{\,j}$ is the one defined on forms in $\Omega^\grast_\tla(U_{ij})$. 

On $U_{ij} = U_i \cap U_j \neq \ensvide$, with obvious notations, we decompose $\omega^i_{\loc}$ along the $\lfc_i^a$'s and $\omega^j_{\loc}$ along the $\lfc_j^a$'s. Using \eqref{eq-relationtriva}, $\halpha_{j}^{\,i}(\omega^j_{\loc}) = \omega^i_{\loc}$, $\halpha_{j}^{\,i}(\omega^j_{\mu_1 \ldots \mu_r a_1 \ldots a_s}) = \alpha_{j}^{i}(\omega^j_{\mu_1 \ldots \mu_r a_1 \ldots a_s})$ and $\halpha_{j}^{\,i}(\dd x^\mu) = \dd x^\mu$, one gets
\begin{equation}
\label{eq-relationtrivcomponentsona}
\omega^i_{\mu_1 \ldots \mu_r a_1 \ldots a_s} = {G^{j}_{i}}^{b_1}_{a_1} \cdots {G^{j}_{i}}^{b_s}_{a_s} \alpha_{j}^{i}(\omega^j_{\mu_1 \ldots \mu_r b_1 \ldots b_s})
\end{equation}
These homogeneous gluing relations motivates the decomposition of global forms on $\lieA$ along the $\lfc^a$ instead of the $\theta^a$'s.

Let $(g,h,\nabla)$ be a triple as in Proposition~\ref{prop-tripleforinnernondegeneratemetric}. Let $(U, \Psi, \nabla^{0})$ be a local trivialization of $\lieA$. Denote by $\lfc^a \in \Omega^1_\tla(U)$ the mixed basis relative to both $\nabla$ and the basis $\{E_a\}_{1 \leq a \leq n}$ of $\kg$. Let us assume that $U$ is the support of a chart of $\varM$, with coordinates $(x^\mu)$. 
Locally, we can write \eqref{eq-hatggha} as
\begin{equation*}
\hg_\loc = g_{\mu \nu} \dd x^\mu \dd x^\nu + h_{\loc\, a, b} \lfc^a \lfc^b
\end{equation*}
where $g_{\mu \nu}$ are the components of the metric $g$ on $T\varM$ in a local chart over $U$ and where $h_{\loc\, a, b} = h_\loc(E_a, E_b)$. The mixed basis permits to diagonalize by blocks the local expression of the metric $\hg$.

\subsection{Inner orientation and integration}
\label{subsec-innerorientationandintegration}

Let as before $\{E_a\}_{1 \leq a \leq n}$ denotes a basis of the $n$-dimensional Lie algebra $\kg$ and $\{\theta^a\}_{1 \leq a \leq n}$ its dual basis. 

An inner metric $h$ on $\lieL$ induces a local metric $h_\loc$ on $U \times \kg$ given by $h_\loc(\gamma, \eta) = h(\Psi(\gamma), \Psi(\eta))$ for any $\gamma, \eta \in \Gamma(U \times \kg)$. This is the same as in the proof of Proposition~\ref{prop-connectionassociatedtoinnernondegeneratemetric}. Let us define $h_{\loc\, a, b} = h_\loc(E_a, E_b)$. 

On $U_i$, denote by $\gamma_i = \gamma_i^a E_a$ the local expression of an element $\gamma \in \lieL$. On $U_{ij} = U_i \cap U_j \neq \ensvide$, the relation $\gamma_i = \alpha^{i}_{j}(\gamma_j)$ induces the relation $\gamma_i^a = {G^{i}_{j}}^{a}_{b} \gamma_j^b$. For any $\gamma, \eta \in \lieL$, the compatibility relation $h^j_\loc(\gamma_j, \eta_j) = h^i_\loc(\gamma_i, \eta_i) = h^i_\loc(\alpha^{i}_{j}(\gamma_j), \alpha^{i}_{j}(\gamma_j))$ gives
\begin{equation}
\label{eq-relationtrivh}
h^j_{\loc\, {b_1}, {b_2}} = {G^{i}_{j}}^{a_1}_{b_1} {G^{i}_{j}}^{a_2}_{b_2} h^i_{\loc\, {a_1}, {a_2}}
\end{equation}

The vector bundle $\varL$ is orientable if $\det({G^{i}_{j}}) > 0$ for any $i,j$ such that $U_i \cap U_j \neq \ensvide$. We will say that $\lieL$ is orientable if $\varL$ is orientable.

\begin{definition}
We will say that a transitive Lie algebroid is inner orientable if its kernel is orientable.
\end{definition}

This notion of ``inner orientable Lie algebroid'' is the same as the notion of ``vertically orientable Lie algebroid'' used in \cite{MR1908998}.

\begin{proposition}
\label{prop-globalinnerform}
On each $U_i$, let $|h^i_\loc|$ denotes the absolute value of the determinant of the matrix $(h^i_{\loc\, a, b})_{a, b}$. If $\lieL$ is orientable then on $U_{ij} = U_i \cap U_j \neq \ensvide$ one has
\begin{equation*}
\halpha_{i}^{\,j}\left( \sqrt{|h^i_\loc|} \lfc_i^{1} \ordwedge \cdots \ordwedge \lfc_i^{n}\right) = \sqrt{|h^j_\loc|} \lfc_j^{1} \ordwedge \cdots \ordwedge \lfc_j^{n}
\end{equation*}
This implies that there exists a global form $\omega_{h,\lfc} \in \Omega^\grast(\lieA)$ of maximal inner degree $n$ defined locally by
\begin{equation*}
\omega_{h,\lfc} = (-1)^n \sqrt{|h_\loc|} \lfc^{1} \ordwedge \cdots \ordwedge \lfc^{n}
\end{equation*}
\end{proposition}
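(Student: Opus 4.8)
The plan is to establish the transformation rule for the local volume-type expression $\sqrt{|h^i_\loc|}\,\lfc_i^1 \ordwedge \cdots \ordwedge \lfc_i^n$ by combining the two homogeneous gluing relations already at our disposal: the transformation \eqref{eq-relationtriva} of the mixed one-forms $\lfc_i^a$ under $\halpha_i^{\,j}$, and the transformation \eqref{eq-relationtrivh} of the inner-metric components $h^i_{\loc\,a,b}$. Both are governed by the matrix $G^i_j$ with entries defined by $\alpha^i_j(E_a) = {G^i_j}^b_a E_b$. The key observation is that the wedge product $\lfc_i^1 \ordwedge \cdots \ordwedge \lfc_i^n$ picks up the determinant of the relevant matrix, while $\sqrt{|h^i_\loc|}$ picks up its reciprocal (up to absolute value), so the two factors combine to give exactly the orientability condition $\det(G^i_j) > 0$ that makes the sign disappear.

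First I would apply $\halpha_i^{\,j}$ to the wedge product. Since $\halpha_i^{\,j}$ is an algebra homomorphism (by Proposition~\ref{prop-localdifferentialcommutes}, extended to $\Omega^\grast_\tla(U_{ij})$-valued forms as noted after its proof), it distributes over $\ordwedge$, and \eqref{eq-relationtriva} gives $\halpha_i^{\,j}(\lfc_i^a) = {G^i_j}^a_b \lfc_j^b$. Therefore
\begin{equation*}
\halpha_i^{\,j}\left(\lfc_i^1 \ordwedge \cdots \ordwedge \lfc_i^n\right) = {G^i_j}^1_{b_1} \cdots {G^i_j}^n_{b_n}\, \lfc_j^{b_1} \ordwedge \cdots \ordwedge \lfc_j^{b_n} = \det\!\big(G^i_j\big)\, \lfc_j^1 \ordwedge \cdots \ordwedge \lfc_j^n,
\end{equation*}
using the standard antisymmetrization that converts the summed product of matrix entries against a totally antisymmetric wedge into the determinant.

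Next I would handle the metric factor. From \eqref{eq-relationtrivh}, written in matrix form as $h^j_\loc = (G^i_j)^{\mathsf T}\, h^i_\loc\, G^i_j$, taking determinants yields $\det(h^j_\loc) = \det(G^i_j)^2 \det(h^i_\loc)$, hence $\sqrt{|h^j_\loc|} = |\det(G^i_j)|\,\sqrt{|h^i_\loc|}$, and so $\sqrt{|h^i_\loc|} = |\det(G^i_j)|^{-1}\sqrt{|h^j_\loc|}$. Since $\halpha_i^{\,j}$ acts on the scalar function $\sqrt{|h^i_\loc|}$ only through the coordinate pullback by $s_i^{\,j}$, which leaves the fiberwise determinant relation intact as a functional identity on $U_{ij}$, I would combine this with the wedge computation to obtain
\begin{equation*}
\halpha_i^{\,j}\left(\sqrt{|h^i_\loc|}\,\lfc_i^1 \ordwedge \cdots \ordwedge \lfc_i^n\right) = \frac{\det(G^i_j)}{|\det(G^i_j)|}\,\sqrt{|h^j_\loc|}\,\lfc_j^1 \ordwedge \cdots \ordwedge \lfc_j^n.
\end{equation*}
The orientability hypothesis $\det(G^i_j) > 0$ makes the sign ratio equal to $1$, which gives the stated equality. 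The final assertion about the existence of the global form $\omega_{h,\lfc}$ then follows from Proposition~\ref{prop-relationtrivforms}: the displayed cocycle relation is precisely condition \eqref{eq-relationtrivforms}, so the consistent local expressions $(-1)^n\sqrt{|h_\loc|}\,\lfc^1 \ordwedge \cdots \ordwedge \lfc^n$ glue to a well-defined global element of $\Omega^\grast(\lieA)$ of maximal inner degree $n$.

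**The main obstacle** I expect is purely bookkeeping rather than conceptual: one must be careful that the scalar factor $\sqrt{|h^i_\loc|}$, being a function on $U_{ij}$ rather than a genuine inner-degree object, interacts correctly with $\halpha_i^{\,j}$, which acts on functions only by the base-coordinate reparametrization $s_i^{\,j}$ (the identity on the base vector fields, so $\halpha_i^{\,j}(\dd x^\mu) = \dd x^\mu$). One should verify that the metric identity \eqref{eq-relationtrivh} is an equality of functions that is respected pointwise under this reparametrization, so that no spurious Jacobian enters. Tracking the precise index placement in $G^i_j$ versus $G^j_i$ (and hence whether the determinant or its inverse appears) is the other point demanding attention, but it is settled by the defining relation $\alpha^i_j(E_a) = {G^i_j}^b_a E_b$ together with the cocycle identity $\alpha^i_j \circ \alpha^j_i = \Id$.
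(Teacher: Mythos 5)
Your proposal is correct and follows essentially the same route as the paper's proof: applying $\halpha_{i}^{\,j}$ multiplicatively via \eqref{eq-relationtriva} to extract $\det(G^{i}_{j})$ from the wedge product, combining with the determinant relation $\det(h^j_\loc) = \det(G^{i}_{j})^2 \det(h^i_\loc)$ deduced from \eqref{eq-relationtrivh}, and invoking orientability to kill the sign $\det(G^{i}_{j})/|\det(G^{i}_{j})|$. Your additional care about how $\halpha_{i}^{\,j}$ acts on the scalar factor $\sqrt{|h^i_\loc|}$ and the remark that the gluing criterion must be the $C^\infty(\varM)$-valued analogue of Proposition~\ref{prop-relationtrivforms} are points the paper leaves implicit, but they do not change the argument.
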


The form $\omega_{h,\lfc} \in \Omega^\grast(\lieA)$ will play the role of a ``volume form'' for fiber integration.

\begin{proof}
On one hand, we have
\begin{align*}
\halpha_{i}^{\,j}\left( \lfc_i^{1} \ordwedge \cdots \ordwedge \lfc_i^{n} \right) &= {G^{i}_{j}}^1_{b_1} \cdots {G^{i}_{j}}^n_{b_n} \lfc_j^{b_1} \ordwedge \cdots \ordwedge \lfc_j^{b_n}
\\
&= \sum_{b_i} \varepsilon^{b_1 \dots b_n} {G^{i}_{j}}^1_{b_1} \cdots {G^{i}_{j}}^n_{b_n} \lfc_j^{1} \ordwedge \cdots \ordwedge \lfc_j^{n}
\\
&= \det({G^{i}_{j}}) \lfc_j^{1} \ordwedge \cdots \ordwedge \lfc_j^{n}
\end{align*}
where $\varepsilon^{b_1 \dots b_n}$ is the totally antisymmetric Levi-Civita symbol.

On the other hand, a straightforward computation gives 
\begin{equation}
\det(h^j_\loc) = \det({G^{i}_{j}})^2 \det(h^i_\loc) \label{eq-relationtrivdeth}
\end{equation}
so that $|h^i_\loc| = |\det({G^{i}_{j}})|^{-2} |h^j_\loc|$ as a density.

When $\det({G^{i}_{j}}) > 0$, one has $\halpha_{i}^{\,j}\left( \sqrt{|h^i_\loc|} \lfc_i^{1} \ordwedge \cdots \ordwedge \lfc_i^{n} \right) = \sqrt{|h^j_\loc|} \lfc_j^{1} \ordwedge \cdots \ordwedge \lfc_j^{n}$.
\end{proof}

Any form $\omega \in \Omega^\grast(\lieA, \lieL)$ of maximal degree $n$ in the inner direction can be written locally on $U_i$ as 
\begin{equation*}
\omega^i_{\loc} = (-1)^n \omega^\maxinner_{\loc\, i} \sqrt{|h^i_\loc|} \lfc_i^{1} \ordwedge \cdots \ordwedge \lfc_i^{n} + \omega^R
= \omega^\maxinner_{\loc\, i}\, \omega_{h,\lfc} + \omega^R
\end{equation*}
where $\omega^R$ contains only terms of lower degrees in the $\lfc_i^a$'s, with $\omega^\maxinner_{\loc\, i} \in \Omega^\grast(U_i) \otimes \kg$ (``$\maxinner$'' stands for ``maximum inner'').
The factor $\omega^\maxinner_{\loc\, i}$ can also be defined as the factor of $\sqrt{|h^i_\loc|} \theta^1 \ordwedge \cdots \ordwedge \theta^n$ in $\omega^i_{\loc}$ and $\omega^R$ is a sum of terms with degree in the $\theta^a$'s less or equal to $n-1$.

Two such local expressions can be compared on intersecting trivializations. Applying $\halpha_{i}^{\,j}$ on $\omega^i_{\loc}$ and using Proposition~\ref{prop-globalinnerform}, one gets
\begin{equation*}
\alpha^{j}_{i}(\omega^\maxinner_{\loc\, i}) = \omega^\maxinner_{\loc\, j}
\end{equation*}
so that the forms $\omega^\maxinner_{\loc\, i}$ define a global form $\omega^\maxinner \in \Omega^{\grast-n}(\varM, \varL)$, the space of (de~Rham) forms on $\varM$ with values in the vector bundle in Lie algebras $\varL$.

\begin{definition}
\label{def-innerintegration}
On an inner orientable transitive Lie algebroid equipped with a metric, one defines the inner integration as the operation
\begin{align*}
\int_\inner : \Omega^\grast(\lieA,\lieL) &\rightarrow \Omega^{\grast-n}(\varM, \varL)
&
\omega &\mapsto \omega^\maxinner
\end{align*}
This inner integration is zero when applied to forms which does not contain terms of maximal inner degree $n$.

\end{definition}

Because the form $\omega^\maxinner$ defined to be the result of this inner integration is in fact the factor of $\sqrt{|h^i_\loc|} \theta^1 \ordwedge \cdots \ordwedge \theta^n$, this integration does not depend on the choice of the connection but only on the inner metric $h$.

The same construction yields an inner integration for $C^\infty(\varM)$-valued forms through
\begin{equation*}
\int_\inner : \Omega^\grast(\lieA) \rightarrow \Omega^{\grast-n}(\varM)
\end{equation*}
Notice that by construction $\int_\inner \omega_{h,\lfc} =1$ where $\omega_{h,\lfc} \in \Omega^\grast(\lieA)$ is the ``volume form'' defined in Proposition~\ref{prop-globalinnerform}.

The global form $\omega_{h,\lfc}$ plays a dual role to the non-singular cross section $\varepsilon \in \exter^n \lieL$ used in \cite{MR1908998} to define integration along the fiber on $\Omega^\grast(\lieA)$. Given a non degenerate inner metric $h$ on $\lieL$, one can define
\begin{equation}
\label{eq-epsilonkubarski}
\varepsilon_{\loc} = (-1)^n \sqrt{|h_\loc|}^{\;-1} E_1 \wedge \cdots \wedge E_n
\end{equation}
in any local trivialization $(U, \Psi, \nabla^{0})$ of $\lieA$. These local expressions define a global form $\varepsilon \in \Gamma(\exter^n \varL) = \exter^n \lieL$ which satisfies
\begin{equation*}
i_\varepsilon \omega_{h,\lfc} = 1
\end{equation*}
where the operation $i_\varepsilon$ on $\Omega^\grast(\lieA)$ is defined as in \cite{MR1908998} and corresponds there to the integral along the fiber on $\Omega^\grast(\lieA)$. This relates our constructions to the ones proposed by Kubarski. The present notion of inner integration is also a direct generalization for transitive Lie algebroids of the notion of ``noncommutative'' integration defined and studied in \cite{Mass15} (see also \cite{Mass30} for more constructions which are related to the present situation).

In order to define a global integration on forms, we suppose from now on that the manifold $\varM$ is orientable.

\begin{definition}
A transitive Lie algebroid is orientable if it is inner orientable and if its base manifold is orientable.
\end{definition}

Then we can define:
\begin{definition}
The integration on an orientable transitive Lie algebroid equipped with a metric is the composition of the inner integration on $\Omega^\grast(\lieA)$ with the integration of forms on the base manifold. This integration is denoted by
\begin{equation*}
\int_\lieA \omega = \int_\varM \int_\inner \omega \in \gC
\end{equation*}
for any $\omega \in \Omega^\grast(\lieA)$. 
\end{definition}

Obviously this definition makes sense only when the integral on $\varM$ converges. For instance, this is always the case when $\varM$ is compact or for compactly supported (relative to $\varM$) forms on $\lieA$.

This definition is the same as the one given in Definition~2.1 in \cite{MR1908998}. The present definition only consider forms in $\Omega^\grast(\lieA)$. It can be extended to $\Omega^\grast(\lieA, \lieL)$ in the case of the transitive Lie algebroid of derivations of a vector bundle using the ``extended'' inner integration $\int^{\tr}_\inner$ which will be defined in \eqref{eq-definnerintegrationtrace}.

This integral is non zero only if its contains a non zero term which is of maximal degree in both the inner direction and the spatial direction. In that particular case, the integral depends only on this term. This integration has several properties which have been described in \cite{MR1908998}.

\begin{definition}
\label{def-scalarproductformsfunctions}
Let $\lieA$ be an orientable transitive Lie algebroid equipped with a metric. For any $\omega \in \Omega^\grast(\lieA)$ and $\eta \in \Omega^\grast(\lieA)$, one defines their scalar product as
\begin{equation*}
\langle \omega, \eta \rangle = \int_\lieA \omega\, \eta \in \gC
\end{equation*}
\end{definition}

Obviously this scalar product is always zero if the graduation of the product $\omega\, \eta$ is not maximal in the inner directions and in the spatial directions.

This scalar product is considered in \cite{MR1908998} to study Poincaré duality on $\Omega^\grast(\lieA)$.

Let $h$ be an inner metric on $\lieL$. We can extend it to a map $h : \Omega^p(\lieA, \lieL) \otimes_{C^\infty(\varM)} \Omega^q(\lieA, \lieL) \rightarrow \Omega^{p+q}(\lieA)$ by
\begin{multline*}
h(\omega, \eta)(\kX_1, \dots, \kX_{p+q}) =\\
 \frac{1}{p!q!} \sum_{\sigma\in \kS_{p+q}} (-1)^{\sign(\sigma)} h(\omega(\kX_{\sigma(1)}, \dots, \kX_{\sigma(p)}), \eta(\kX_{\sigma(p+1)}, \dots, \kX_{\sigma(p+q)}))
\end{multline*}
for any $\omega \in \Omega^p(\lieA, \lieL)$ and $\eta \in \Omega^q(\lieA, \lieL)$. For $p=q=0$, this is the original map $h$. Notice that $h(\omega, \eta) = (-1)^{pq} h(\eta, \omega)$.

\begin{definition}
\label{def-scalarproductforms}
Let $\lieA$ be an orientable transitive Lie algebroid equipped with a metric. For any $\omega \in \Omega^\grast(\lieA, \lieL)$ and $\eta \in \Omega^\grast(\lieA, \lieL)$, one defines their scalar product as
\begin{equation*}
\langle \omega, \eta \rangle = \int_\lieA h(\omega, \eta) \in \gC
\end{equation*}
\end{definition}

\subsection{Hodge operators}
\label{subsec-hodgeoperators}

In the following, we suppose that $\lieA$ is an orientable transitive Lie algebroid equipped with a metric $\hg = (g,h,\nabla)$.

Let $\omega \in \Omega^p(\lieA, \lieL)$ be written locally in a trivialization $(U, \Psi, \nabla^{0})$ of $\lieA$ as 
\begin{equation*}
\omega_{\loc} = \sum_{r+s=p} \omega^{(r,s)}_{\mu_1 \ldots \mu_r a_1 \ldots a_s} \dd x^{\mu_1} \ordwedge \cdots \ordwedge \dd x^{\mu_r} \ordwedge \lfc^{a_1} \ordwedge \cdots \ordwedge \lfc^{a_s}
\end{equation*}
where the $\lfc^{a}$'s are the components of the local expression of the connection $1$-form associated to $\nabla$ and where $\omega^{(r,s)}_{\mu_1 \ldots \mu_r a_1 \ldots a_s}$ are functions on $U$ with values in $\kg$.

Consider the form in $\Omega^{m+n-p}_\tla(U,\kg)$ defined by
\begin{multline*}
\hodgeast \omega_{\loc} = \sum_{r+s=p} (-1)^{s(m-r)}\; \frac{1}{r! s!}\; \sqrt{|h_\loc|} \sqrt{|g|}\; \omega^{(r,s)}_{\mu_1 \ldots \mu_r a_1 \ldots a_s}\; \epsilon_{\nu_1 \ldots \nu_m}\; \epsilon_{b_1 \ldots b_n} \\
\times g^{\mu_1 \nu_1} \cdots g^{\mu_r \nu_r}\; h_\loc^{a_1 b_1} \cdots h_\loc^{a_s b_s}\; \dd x^{\nu_{r+1}} \ordwedge \cdots \ordwedge \dd x^{\nu_m} \ordwedge \lfc^{b_{s+1}} \ordwedge \cdots \ordwedge \lfc^{b_n}
\end{multline*}
where $\epsilon_{\nu_1 \ldots \nu_m}$ and $\epsilon_{b_1 \ldots b_n}$ are the totally antisymmetric Levi-Civita symbols.

Using \eqref{eq-relationtriva}, \eqref{eq-relationtrivcomponentsona}, \eqref{eq-relationtrivh}, \eqref{eq-relationtrivdeth} one can establish that $\halpha_{j}^{\,i}(\hodgeast \omega_{\loc}^j) = \hodgeast \omega_{\loc}^i$ so that, by Proposition~\ref{prop-relationtrivforms}, $\hodgeast \omega \in \Omega^{m+n-p}(\lieA, \lieL)$ is well-defined. 

\begin{definition}
The map $\hodgeast : \Omega^p(\lieA, \lieL) \rightarrow \Omega^{m+n-p}(\lieA, \lieL)$ is the Hodge star operation on the orientable transitive Lie algebroid $\lieA$ associated to the metric $\hg$.
\end{definition}

\begin{proposition}
For any $\omega \in \Omega^p(\lieA, \lieL)$ one has
\begin{equation*}
\hodgeast \hodgeast \omega = (-1)^{(m+n-p)p} \omega
\end{equation*}
\end{proposition}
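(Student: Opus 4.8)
The plan is to reduce everything to the local model and exploit the block structure of the metric. Since the operator $\hodgeast$ has already been shown to be globally well defined (the relation $\halpha_{j}^{\,i}(\hodgeast \omega_{\loc}^j) = \hodgeast \omega_{\loc}^i$ together with Proposition~\ref{prop-relationtrivforms}), the composite $\hodgeast\hodgeast\omega$ is again a form in $\Omega^p(\lieA,\lieL)$, and the claimed identity is $C^\infty(\varM)$-linear and pointwise. Hence it suffices to verify it on the local expression $\omega_{\loc}$ in a single trivialization $(U,\Psi,\nabla^0)$, written in the mixed basis $\{\dd x^\mu\}$, $\{\lfc^a\}$. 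The $\kg$-valued coefficients $\omega^{(r,s)}_{\mu_1 \ldots a_s}$ are inert under $\hodgeast$, so the whole computation takes place in the exterior algebra generated by the $\dd x^\mu$ and the $\lfc^a$, equipped with the block-diagonal metric $\hg_\loc = g_{\mu\nu}\,\dd x^\mu \dd x^\nu + h_{\loc\,ab}\,\lfc^a \lfc^b$.

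First I would record that, in this block-diagonal situation, the local Hodge star defined before the statement factorizes through the $m$-dimensional de~Rham Hodge star $\hodgeast_g$ (for $g$) acting on the $\dd x^\mu$ factors and the $n$-dimensional inner Hodge star $\hodgeast_h$ (for $h_\loc$) acting on the $\lfc^a$ factors. Concretely, for a homogeneous term of spatial degree $r$ and inner degree $s$, writing $\eta_r$ for its $\dd x$-part and $\zeta_s$ for its $\lfc$-part, the defining formula reads exactly
\[
\hodgeast(\eta_r \ordwedge \zeta_s) = (-1)^{s(m-r)}\,(\hodgeast_g \eta_r) \ordwedge (\hodgeast_h \zeta_s),
\]
the sign $(-1)^{s(m-r)}$ being precisely the one needed to move the $(m-r)$-form $\hodgeast_g \eta_r$ past the $s$-form $\zeta_s$. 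This can be checked against the defining property $\beta \ordwedge \hodgeast\gamma = \langle \beta, \gamma\rangle\,\mathrm{vol}$ for the product metric, or simply read off the explicit formula preceding the statement.

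Then I would apply this factorization twice. Using the classical single-factor identities $\hodgeast_g\hodgeast_g = (-1)^{r(m-r)}$ on $r$-forms and $\hodgeast_h\hodgeast_h = (-1)^{s(n-s)}$ on $s$-forms, together with the two interleaving signs (the second one being $(-1)^{(n-s)r}$, since $\hodgeast_g\eta_r$ has spatial degree $m-r$ and $\hodgeast_h\zeta_s$ has inner degree $n-s$), I obtain
\[
\hodgeast\hodgeast(\eta_r \ordwedge \zeta_s) = (-1)^{E}\,\eta_r \ordwedge \zeta_s, \qquad E = s(m-r) + (n-s)r + r(m-r) + s(n-s).
\]
A one-line expansion with $p = r+s$ gives $E = (m+n-p)p$, which is the claimed exponent. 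Since $\omega_{\loc}$ is a finite sum of such homogeneous terms and both sides are linear, this proves the identity locally, hence globally.

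The main obstacle is purely the sign bookkeeping: getting the interleaving sign $(-1)^{s(m-r)}$ in the factorization correct, combining it carefully with the two double-star signs, and then verifying the exponent arithmetic $E = (m+n-p)p$. The only conceptual point to flag is the metric signature: the fully general computation produces an extra factor $\sign(\det g)\,\sign(\det h_\loc)$ (which is exactly why the formula is written with $\sqrt{|g|}$ and $\sqrt{|h_\loc|}$), and this factor equals $+1$ in the Riemannian case assumed here, so no additional sign survives.
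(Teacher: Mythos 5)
Your proof is correct, and the sign arithmetic checks out: $s(m-r)+(n-s)r+r(m-r)+s(n-s)=(m+n-r-s)(r+s)$. The paper disposes of the statement in one line, as ``a direct computation using the definition of $\hodgeast$ and some combinatoric properties of the Levi-Civita symbols'' --- that is, it iterates the explicit local formula and extracts the sign from contractions of pairs of $\epsilon$-symbols (generalized Kronecker deltas). You take a genuinely different, more structured route: you read the defining formula as the product-metric factorization $\hodgeast(\eta_r \ordwedge \zeta_s) = (-1)^{s(m-r)}\,(\hodgeast_g \eta_r) \ordwedge (\hodgeast_h \zeta_s)$, which is available precisely because the mixed basis $\{\dd x^\mu, \lfc^a\}$ block-diagonalizes $\hg_\loc$ (the point of Proposition~\ref{prop-tripleforinnernondegeneratemetric}), and then you quote the classical one-factor identities $\hodgeast_g\hodgeast_g=(-1)^{r(m-r)}$ and $\hodgeast_h\hodgeast_h=(-1)^{s(n-s)}$. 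What this buys is modularity: the $\epsilon$-combinatorics is done once and for all in the classical results, and only the two interleaving signs $(-1)^{s(m-r)}$ and $(-1)^{(n-s)r}$ need fresh verification --- both of which you get right. Your localization step is also legitimate, since $\hodgeast$ is defined trivialization-wise and has already been shown compatible with $\halpha_{j}^{\,i}$. Finally, your signature caveat is well taken and in fact slightly sharper than the paper's statement: with the $\sqrt{|g|}$, $\sqrt{|h_\loc|}$ convention, an indefinite $g$ or $h_\loc$ (a Lorentzian base metric, or the Killing form of a noncompact semisimple $\kg$) contributes an extra factor $\sign(\det g)\,\sign(\det h_\loc)$ to the double star, which the proposition as written silently assumes to be $+1$.
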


\begin{proof}
This is just direct a computation using the definition of $\hodgeast$ and some combinatoric properties of the Levi-Civita symbols.
\end{proof}

This Hodge star operation defines a natural scalar product on any $\Omega^p(\lieA, \lieL)$ by
\begin{equation*}
( \omega, \eta ) = \langle \omega, \hodgeast \eta  \rangle
\end{equation*}
for any $\omega, \eta  \in \Omega^p(\lieA, \lieL)$ where $\langle -, - \rangle$ is defined in Definition~\ref{def-scalarproductforms}.

\begin{proposition}
For any $\omega, \eta  \in \Omega^p(\lieA, \lieL)$ written in a trivialization of $\lieA$ as
\begin{align*}
\omega_{\loc} &= \sum_{r+s=p} \omega^{(r,s)\, a}_{\mu_1 \ldots \mu_r a_1 \ldots a_s} \dd x^{\mu_1} \ordwedge \cdots \ordwedge \dd x^{\mu_r} \ordwedge \lfc^{a_1} \ordwedge \cdots \ordwedge \lfc^{a_s} \otimes E_a
\\
\eta_{\loc} &= \sum_{r+s=p} \eta^{(r,s)\, b}_{\nu_1 \ldots \nu_r b_1 \ldots b_s} \dd x^{\nu_1} \ordwedge \cdots \ordwedge \dd x^{\nu_r} \ordwedge \lfc^{b_1} \ordwedge \cdots \ordwedge \lfc^{b_s} \otimes E_b
\end{align*}
one has
\begin{equation*}
( \omega, \eta ) =  (-1)^n\sum_{r+s=p} (-1)^{s(m-r)}\; (m-r)!\; (n-s)!\; \omega^{(r,s)\, a}_{\mu_1 \ldots \mu_r a_1 \ldots a_s}\; \eta_{(r,s)\, a}^{\mu_1 \ldots \mu_r a_1 \ldots a_s}
\end{equation*}
with
\begin{equation*}
\eta_{(r,s)\, a}^{\mu_1 \ldots \mu_r a_1 \ldots a_s} = g^{\mu_1 \nu_1} \cdots g^{\mu_r \nu_r}\; h^{a_1 b_1} \cdots h^{a_s b_s} h_{ab}\; \eta^{(r,s)\, b}_{\nu_1 \ldots \nu_r b_1 \ldots b_s}
\end{equation*}
where $(g^{\mu \nu})$ and $(h^{a b})$ are the inverse matrices of $(g_{\mu \nu})$ and $(h_{a b})$ respectively.
\end{proposition}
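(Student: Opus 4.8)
The plan is to unfold the chain of definitions and reduce everything to a single local computation in one trivialization $(U,\Psi,\nabla^{0})$. By Definition~\ref{def-scalarproductforms} and the definition of $(\omega,\eta)$ one has
\[
(\omega,\eta) = \langle\omega,\hodgeast\eta\rangle = \int_\lieA h(\omega,\hodgeast\eta) = \int_\varM\int_\inner h(\omega,\hodgeast\eta),
\]
so the task is to compute the top form $h(\omega,\hodgeast\eta)\in\Omega^{m+n}(\lieA)$ and read off its component of maximal inner and maximal spatial degree. First I would substitute the explicit local expression of $\hodgeast\eta$ from the definition of the Hodge operator, keeping together the coefficients $\eta^{(r,s)\,b}_{\nu_1\ldots\nu_r b_1\ldots b_s}$ and the prefactors $(-1)^{s(m-r)}\tfrac{1}{r!s!}\sqrt{|h_\loc|}\sqrt{|g|}$, along with the two Levi-Civita symbols and the metric contractions.

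Next I would form the product $h(\omega,\hodgeast\eta)$. Since the extended map $h$ acts on the $\kg$-values and leaves the form degrees to be wedged, on decomposable terms it reduces to the value pairing $h(E_a,E_b)=h_{ab}$ times the ordinary wedge of the $\dd x$'s and $\lfc$'s. The key structural observation is a bidegree count: a term of $\omega$ of bidegree $(r,s)$ can produce a top form of bidegree $(m,n)$ only when wedged with the part of $\hodgeast\eta$ of bidegree $(m-r,n-s)$, and in the Hodge operator that part comes precisely from the bidegree-$(r,s)$ component of $\eta$. Hence only the diagonal terms survive and the sum over $r+s=p$ of the statement appears; for each such term the value pairing contributes exactly the factor $h_{ab}$ contracting $\omega$'s value index $a$ with $\eta$'s value index $b$ that occurs in $\eta_{(r,s)\,a}^{\ldots}$.

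Then I would carry out the two integrations together with the index contractions. Applying $\int_\inner$ extracts the factor of $\sqrt{|h_\loc|}\,\lfc^{1}\ordwedge\cdots\ordwedge\lfc^{n}$; by the normalization $\int_\inner\omega_{h,\lfc}=1$ of Proposition~\ref{prop-globalinnerform} this both cancels the $\sqrt{|h_\loc|}$ produced by the Hodge operator and yields the overall sign $(-1)^n$. Writing the two surviving wedges $\lfc^{a_1}\ordwedge\cdots\ordwedge\lfc^{a_s}\ordwedge\lfc^{c_{s+1}}\ordwedge\cdots\ordwedge\lfc^{c_n}$ and $\dd x^{\mu_1}\ordwedge\cdots\ordwedge\dd x^{\mu_r}\ordwedge\dd x^{\rho_{r+1}}\ordwedge\cdots\ordwedge\dd x^{\rho_m}$ as top forms times Levi-Civita symbols, the contractions $\epsilon_{c_1\ldots c_n}\epsilon^{a_1\ldots a_s c_{s+1}\ldots c_n}$ and $\epsilon_{\rho_1\ldots\rho_m}\epsilon^{\mu_1\ldots\mu_r\rho_{r+1}\ldots\rho_m}$, summed over the complementary indices, collapse to generalized Kronecker deltas; after using the antisymmetry of the components and combining with the $\tfrac{1}{r!s!}$ from the Hodge operator this produces exactly the factors $(m-r)!$ and $(n-s)!$ and the metric raisings $g^{\mu_i\nu_i}$ and $h^{a_i b_i}$ entering $\eta_{(r,s)\,a}^{\ldots}$. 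The de~Rham integration over $\varM$ then converts the remaining spatial top form into the integral of the resulting scalar density against the Riemannian volume of $g$.

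The tensor algebra is routine; the delicate part, and the main obstacle, is the sign bookkeeping. Three sources of signs must be assembled consistently: the intrinsic Hodge sign $(-1)^{s(m-r)}$, the sign incurred when the interleaved $\dd x$ and $\lfc$ factors of $\omega$ and $\hodgeast\eta$ are reordered into the standard spatial-then-inner volume ordering (moving the $s$ factors $\lfc^{a_1}\ordwedge\cdots\ordwedge\lfc^{a_s}$ past the $m-r$ factors $\dd x^{\rho_{r+1}}\ordwedge\cdots\ordwedge\dd x^{\rho_m}$), and the sign $(-1)^n$ from the inner-integration normalization, all combined with the signs hidden in the two $\epsilon\epsilon$ contractions. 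Their assembly is what yields the displayed $(-1)^n(-1)^{s(m-r)}$ prefactor, and I would establish it by a single careful permutation count on the generic diagonal term rather than checking each value of $(r,s)$ separately.
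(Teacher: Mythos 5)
Your strategy is exactly the one the paper has in mind: its entire proof reads ``this is just a combinatoric straightforward computation,'' and your unfolding of Definition~\ref{def-scalarproductforms}, the diagonal bidegree selection (only the $(m-r,n-s)$ part of $\hodgeast\eta$ coming from the $(r,s)$ part of $\eta$ can pair with the $(r,s)$ part of $\omega$ to reach bidegree $(m,n)$), the $\epsilon\epsilon$-contractions over complementary indices producing $(m-r)!$ and $(n-s)!$, the cancellation of $\tfrac{1}{r!s!}$ against the antisymmetry of the components, and the $(-1)^n$ coming from the normalization $\int_\inner \omega_{h,\lfc}=1$ are all the right ingredients in the right order. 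You are also right that the displayed identity must be read at the level of the integrand, with the integration $\int_\varM$ against the Riemannian volume of $g$ left implicit.

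However, the one step you defer --- the ``single careful permutation count'' --- is precisely where the proposal is incomplete, and it is not an innocuous step. By your own inventory, the reordering of the interleaved factors, i.e.\ moving the $s$ forms $\lfc^{a_1}\ordwedge\cdots\ordwedge\lfc^{a_s}$ of $\omega$ past the $m-r$ forms $\dd x^{\nu_{r+1}}\ordwedge\cdots\ordwedge\dd x^{\nu_m}$ of $\hodgeast\eta$, costs exactly $(-1)^{s(m-r)}$, which is the same sign that appears as prefactor in the definition of $\hodgeast$; the two therefore cancel. The $\epsilon\epsilon$-contractions over the shared complementary indices are sign-free (they give $(m-r)!$ and $(n-s)!$ times generalized Kronecker deltas with positive sign), so the net prefactor produced by the sources you list is $(-1)^n$ alone, not $(-1)^n(-1)^{s(m-r)}$. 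A low-dimensional test makes the tension concrete: for $m=n=1$, $g_{11}=h_{11}=1$ and $\omega=\eta=\lfc^1\otimes E_1$ (bidegree $(0,1)$), the paper's Hodge formula gives $\hodgeast\omega=-\dd x^1\otimes E_1$, hence $h(\omega,\hodgeast\omega)=\dd x^1\ordwedge\lfc^1=(-\dd x^1)\,\omega_{h,\lfc}$, so the integrand is $-1$, whereas the displayed formula gives $(-1)^n(-1)^{s(m-r)}=(-1)(-1)=+1$; note also that the direct computation yields a bidegree-independent overall sign, as one expects of a pairing of the form $h(\omega,\hodgeast\omega)$, while the stated prefactor does not. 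So your assertion that the assembly ``yields the displayed $(-1)^n(-1)^{s(m-r)}$ prefactor'' is not something the count you describe will actually deliver: to complete the proof you must either exhibit an additional sign source within the paper's stated conventions (I do not see one) or conclude that the extra $(-1)^{s(m-r)}$ in the statement does not survive the computation. Since you single out the sign bookkeeping as the main obstacle and then do not perform it, the proof is incomplete exactly at its only delicate point.
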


\begin{proof}
This is just a combinatoric straightforward computation.
\end{proof}

Notice that the Hodge star operation $\hodgeast$ is also well defined on $\Omega^\grast(\lieA)$ where it permits to introduce a scalar product $( \omega, \eta ) = \langle \omega, \hodgeast \eta  \rangle$ using Definition~\ref{def-scalarproductformsfunctions}. A similar relation as the one given in the previous proposition can be established.

\section{Applications to cohomological structures}
\label{sec-cohomology}

Cohomological properties in the context of Lie algebroids have been studied and used, for instance in order to control the deformation theory \cite{MR2443928} or to define characteristic classes \cite{Fern02a}. 

Some spectral arguments are developed in \cite{Mack05a} to relate differential structures at the level of the base manifold $\varM$ to the cohomology of the Lie algebroid with values in a representation space.

In this section, we apply the local description of generalized forms to construct a  bicomplex which generalizes the ordinary Čech-de~Rham bicomplex. Using spectral sequence argument, this construction relates the computation of the cohomology of the complex $\Omega^\grast(\lieA, \lieL)$ to the computation of a Čech cohomology in a convenient presheaf.

The construction presented here is inspired by the ``particular case'' considered in the noncommutative framework in \cite{Mass15}. It also generalizes a similar construction performed in \cite{MR2020382} for the differential complex $\Omega^\grast(\lieA)$. 

The map $\halpha_{j}^{\,i}$ defined in \eqref{eq-definitionalphahat} is the key ingredient of the following considerations. It permits to introduce a restriction map on the natural candidate for a presheaf associated to $\Omega^\grast(\lieA, \lieL)$.

\smallskip
Let $(U, \Psi_U, \nabla^{0,U})$ and $(V, \Psi_V, \nabla^{0,V})$ be local trivializations of $\lieA$ such that $V \subset U$. For any $\omega \in \Omega^\grast_\tla(U,\kg)$, denote by $\omega_{\upharpoonright V}$ its restriction to $V$, which is the restriction of the de~Rham part of the form considered as a element in the bicomplex $\Omega^\grast(U) \otimes \exter^\grast \kg^\ast \otimes \kg$.

Denote by $\halpha_U^V$ the map defined in \eqref{eq-definitionalphahat} associated to these two trivializations over the open subset $U\cap V = V$. Look at $\halpha_U^V$ as a map between forms in $\Omega^\grast_\tla(U\cap V,\kg)$ ``trivialized'' for $(U, \Psi_U, \nabla^{0,U})$ and forms in $\Omega^\grast_\tla(V,\kg)$ ``trivialized'' for $(V, \Psi_V, \nabla^{0,V})$. Actually, forms in $\Omega^\grast_\tla(V,\kg)$ are not considered anymore as trivializations of global forms on $\varM$.

As a direct consequence of Proposition~\ref{prop-localdifferentialcommutes}, the map
\begin{equation}
\label{eq-defiUV}
i_U^V : \Omega^\grast_\tla(U,\kg) \rightarrow \Omega^\grast_\tla(V,\kg),
\qquad
i_U^V(\omega) = \halpha_U^V(\omega_{\upharpoonright V})
\end{equation}
is a morphism of graded differential Lie algebras. For $W \subset V \subset U$, one has the chain relation $i_V^W \circ i_U^V = i_U^W$. Equipped with this restriction map, $U \mapsto \Omega^\grast_\tla(U,\kg)$ is a presheaf of graded differential Lie algebras, which we shall denote by $\caF$.

Let $\kU = \{ U_{i} \}_{i \in I}$ be a good cover of $\varM$ indexed by an ordered set $I$ such that there is a Lie algebroid atlas $\{(U_i, \Psi_i, \nabla^{0,i})\}_{i \in I}$ on $\lieA$ subordinated to it. 

On any $U_{i_0\cdots i_p} = U_{i_0} \cap \cdots \cap U_{i_p} \neq \ensvide$, we suppose given once and for all a trivialization of $\lieA_{|U_{i_0\cdots i_p}}$. For instance, a trivialization on $U_{i_0\cdots i_p} \subset U_{i_0}$ could be the restriction of the trivialization associated to $U_{i_0}$.

We now define an algebraic version of the ordinary Čech-de~Rham bicomplex \cite{BottTu95} associated to the presheaf $\caF$. For $p\geq 0$ and $q \geq 0$, consider the bicomplex
\begin{equation*}
C^{p,q}(\kU, \caF) = \prod_{i_0 < \cdots < i_p} \Omega^q_\tla(U_{i_0\cdots i_p},\kg)
\end{equation*}
We denote by $\omega = (\omega_{i_0\cdots i_p}) \in C^{p,q}(\kU, \caF)$ an homogeneous element of this bicomplex, with $\omega_{i_0\cdots i_p} \in  \Omega^q_\tla(U_{i_0\cdots i_p},\kg)$.

\begin{definition}
With the previous notations, we define
\begin{equation*}
\delta : C^{p,q}(\kU, \caF) \rightarrow C^{p+1,q}(\kU, \caF)
\end{equation*}
by
\begin{equation*}
(\delta \omega)_{i_0\cdots i_{p+1}} = \sum_{k=0}^{p} (-1)^{k+1} i_{U_{i_0\cdots \remove{i_k} \cdots i_{p+1}}}^{U_{i_0\cdots i_{p+1}}}(\omega_{i_0\cdots \remove{i_k} \cdots i_{p+1}})
\end{equation*}
where $i_0\cdots \remove{i_k} \cdots i_{p+1}$ means the omission of the index $i_k$.
\end{definition}

One can extend the bicomplex $C^{p,\grast}(\kU, \caF)$ in order to incorporate the case $p=-1$ by defining $C^{-1,q}(\kU, \caF) = \Omega^q(\lieA, \lieL)$. Then $\delta : C^{-1,q}(\kU, \caF) \rightarrow C^{0,q}(\kU, \caF)$ is defined as the local trivialization of forms in the atlas $\{(U_i, \Psi_i, \nabla^{0,i})\}_{i \in I}$.

\begin{lemma}
One has $\delta^2=0$, $\delta \hd_\tla = \hd_\tla \delta$, and for any $q\geq 0$ the sequence
\begin{equation*}
\xymatrix@1@C=15pt{{\algzero} \ar[r] & {C^{-1,q}(\kU, \caF)} \ar[r]^-{\delta} & {C^{0,q}(\kU, \caF)} \ar[r]^-{\delta} & {C^{1,q}(\kU, \caF)} \ar[r]^-{\delta} & {C^{2,q}(\kU, \caF)} \ar[r]^-{\delta} & {\cdots}}
\end{equation*}
is exact.
\end{lemma}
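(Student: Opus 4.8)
The plan is to treat the three claims in order of increasing difficulty: the first two are formal consequences of the structure already set up, while the exactness of the rows is the substantive point and will be handled by a partition-of-unity homotopy.

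For $\delta^2 = 0$ and $\delta \hd_\tla = \hd_\tla \delta$ the only inputs needed are the chain relation $i_V^W \circ i_U^V = i_U^W$ and the fact, recorded just before the statement, that each restriction map $i_U^V$ is a morphism of graded differential Lie algebras. To see $\delta^2 = 0$, I would expand $(\delta^2 \omega)_{i_0 \cdots i_{p+2}}$; each resulting term omits two of the indices, and pairing the two orders in which a given pair is omitted produces opposite signs while the chain relation identifies the two composite restriction maps, so the terms cancel pairwise, exactly as in the classical Čech computation. For the commutation relation, note that $\hd_\tla$ acts inside each factor $\Omega^\grast_\tla(U_{i_0\cdots i_p},\kg)$ without touching the Čech degree, that each $i_U^V$ commutes with $\hd_\tla$ by Proposition~\ref{prop-localdifferentialcommutes}, and that the combinatorial signs defining $\delta$ do not involve $\hd_\tla$; hence $\delta$ and $\hd_\tla$ commute on the nose (the sign needed to turn this into an anticommuting total differential is to be inserted later, in the definition of the total complex).

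For exactness I would fix, for each $q$, a smooth partition of unity $\{\lambda_k\}_{k \in I}$ subordinate to $\kU$ and build a contracting homotopy $K : C^{p,q}(\kU,\caF) \to C^{p-1,q}(\kU,\caF)$ modelled on the Čech–de~Rham homotopy, of the form
\begin{equation*}
(K\omega)_{i_0 \cdots i_{p-1}} = \sum_{k \in I} \lambda_k \cdot \widetilde{\omega}_{k\, i_0 \cdots i_{p-1}},
\end{equation*}
where $\widetilde{\omega}_{k\, i_0 \cdots i_{p-1}}$ is $\omega_{k\, i_0 \cdots i_{p-1}}$ re-expressed in the trivialization attached to $U_{i_0\cdots i_{p-1}}$ by means of the invertible change-of-trivialization map $\halpha$, and then extended by zero off the support of $\lambda_k$ (which lies in $U_k$). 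The goal is to establish $\delta K + K \delta = \Id$ on $C^{p,q}$ for $p \geq 0$, together with $K \delta = \Id$ on $C^{-1,q} = \Omega^q(\lieA,\lieL)$. Granting this, one obtains exactness at every $C^{p,q}$ with $p \geq 0$ simultaneously; the instance at $C^{0,q}$ recovers Proposition~\ref{prop-relationtrivforms}, since a $\delta$-cocycle in $C^{0,q}$ is precisely a gluing-compatible family $\halpha_{j}^{\,i}(\omega_{\loc}^j) = \omega_{\loc}^i$, hence the trivialization of a global form; and injectivity of $\delta$ on $C^{-1,q}$ is immediate because a global form whose local trivializations all vanish is zero.

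The main obstacle is the correct definition and bookkeeping of $K$. Unlike the classical situation, the component $\omega_{k\, i_0\cdots i_{p-1}}$ is trivialized with respect to the chosen trivialization over $U_{k\, i_0\cdots i_{p-1}}$, so before multiplying by $\lambda_k$ and extending by zero one must transport it into the trivialization over $U_{i_0\cdots i_{p-1}}$ using $\halpha$; keeping track of these transports is where care is required. Once this is arranged, verifying $\delta K + K\delta = \Id$ reduces to the classical combinatorial manipulation, the compatibility of the successive re-trivializations being guaranteed precisely by the chain relation $i_V^W \circ i_U^V = i_U^W$, and the identity collapsing at the end to $\sum_k \lambda_k = 1$.
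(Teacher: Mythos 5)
Your proposal is correct and takes essentially the same route as the paper: $\delta^2=0$ and $\delta\hd_\tla=\hd_\tla\delta$ from the chain relation and the fact that the maps $i_U^V$ are morphisms of graded differential Lie algebras, and exactness via a partition of unity whose terms are transported into the common trivialization by the inverse of $\halpha$ and extended by zero. The only cosmetic difference is that the paper applies this operator $K$ directly to a cocycle $\omega$ and verifies $\delta(K\omega)=\omega$, rather than establishing the full homotopy identity $\delta K + K\delta = \Id$, but the construction and the underlying computation are identical.
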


\begin{proof}
The relation $\delta^2=0$ is a consequence of the chain relation on $i_U^V$.

The Čech differential $\delta$ commutes with $\hd_\tla$ because the restriction maps $i_U^V$ commute with $\hd_\tla$.

In order to prove the exactness, we introduce a partition of unity $\{\rho_i\}_{i \in I}$ subordinated to the good cover $\kU = \{ U_{i} \}_{i \in I}$.

Let $\omega \in C^{p,q}(\kU, \caF)$ such that $\delta \omega = 0$. Then for any $i_0 < \cdots < i_p$, let us define
\begin{equation*}
\tau_{i_0 \ldots i_p} = \sum_{k} (\halpha_{i_0 \ldots i_p}^{k i_0 \ldots i_p})^{-1} \left( \rho_k \omega_{k i_0 \ldots i_p} \right) 
\end{equation*}
where $\rho_k \omega_{k i_0 \ldots i_p}$ is extended on $U_{i_0 \ldots i_p}$ by $0$ outside of $U_{k i_0 \ldots i_p} \subset U_{i_0 \ldots i_p}$. In this formula, we use the simplified notation $\halpha_{i_0 \ldots i_p}^{k i_0 \ldots i_p} = \halpha_{U_{i_0 \ldots i_p}}^{U_{k i_0 \ldots i_p}}$ and in the second summation we use the Čech convention $\omega_{\ldots i j \ldots} = - \omega_{\ldots j i \ldots}$. 

A straightforward computation shows that $\delta \tau = \omega$.
\end{proof}

As a direct consequence of this Lemma, using standard arguments in bicomplexes, one has:

\begin{proposition}
The cohomology of $(\Omega^\grast(\lieA, \lieL), \hd)$ is the cohomology of the total complex of the bicomplex $(C^{\grast,\grast}(\kU, \caF), \hd_\tla, \delta)$.
\end{proposition}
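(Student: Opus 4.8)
The plan is to invoke the standard ``double complex with exact rows (or columns)'' argument, using the preceding Lemma as the crucial input. Recall the setup: we have a first-quadrant bicomplex $C^{\grast,\grast}(\kU, \caF)$ augmented in the $p$-direction by the column $C^{-1,\grast}(\kU,\caF) = \Omega^\grast(\lieA,\lieL)$. The Lemma furnishes two facts: the vertical differential $\hd_\tla$ commutes with the horizontal Čech differential $\delta$ (so we genuinely have a bicomplex), and for each fixed $q$ the augmented row indexed by $p \geq -1$ is exact. The goal is to conclude that the augmentation map $\delta \colon \Omega^\grast(\lieA,\lieL) \to C^{0,\grast}(\kU,\caF)$ induces an isomorphism on cohomology between $(\Omega^\grast(\lieA,\lieL),\hd)$ and the total complex of $(C^{\grast,\grast}(\kU,\caF),\hd_\tla,\delta)$.

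First I would form the total complex $\mathrm{Tot}^\bullet$ with $\mathrm{Tot}^n = \bigoplus_{p+q=n} C^{p,q}(\kU,\caF)$ and total differential $D = \delta + (-1)^p \hd_\tla$ (the sign chosen so that $D^2=0$, which follows from $\delta^2=0$, $\hd_\tla^2=0$, and $\delta\hd_\tla = \hd_\tla\delta$ from the Lemma). The heart of the argument is then the standard spectral sequence or zig-zag comparison: I would run the spectral sequence of the bicomplex by filtering first in the $q$-direction, that is, take $\delta$-cohomology first. By the exactness of the augmented rows established in the Lemma, the $\delta$-cohomology of each row is concentrated in the single column $p=-1$, where it equals $\Omega^q(\lieA,\lieL)$; every other column contributes zero. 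Hence the $E_1$ page collapses onto that one column, and the surviving $E_2$ (equivalently $E_\infty$) differential is exactly $\hd$ acting on $\Omega^\grast(\lieA,\lieL)$. The spectral sequence therefore degenerates and identifies the cohomology of the total complex of the augmented bicomplex with the cohomology of $(\Omega^\grast(\lieA,\lieL),\hd)$.

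To finish I would observe that augmenting a total complex by an exact column shifts cohomology without changing it: since the $p=-1$ column is glued on by the exact augmentation, the total complex of the \emph{un}augmented bicomplex $C^{\grast\geq 0,\grast}$ has the same cohomology as $(\Omega^\grast(\lieA,\lieL),\hd)$, which is the assertion of the Proposition. Concretely, this is the usual consequence that when the augmented rows of a first-quadrant bicomplex are exact, the augmentation is a quasi-isomorphism from $\Omega^\grast(\lieA,\lieL)$ into the total complex. Alternatively, and perhaps more transparently for a reader, I could replace the spectral sequence by an explicit zig-zag / diagram chase: given a $D$-cocycle in the total complex, use row-exactness together with the contracting homotopy implicit in the proof of the Lemma (built from the partition of unity $\{\rho_i\}$) to push all components into the $p=-1$ slot, thereby producing a representative in $\Omega^\grast(\lieA,\lieL)$, and dually lift any $\hd$-cocycle of $\Omega^\grast(\lieA,\lieL)$ to a $D$-cocycle of the total complex.

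The main obstacle is not conceptual but one of bookkeeping: one must verify that the homotopy equivalence is compatible with the vertical differential $\hd_\tla$, so that the induced map on the total complex intertwines $D$ with $\hd$. This compatibility is precisely guaranteed by the relation $\delta\hd_\tla = \hd_\tla\delta$ and by the fact (from the Lemma's proof) that the contracting homotopy for $\delta$ is built from restriction and the maps $\halpha$, both of which commute with $\hd_\tla$ by Proposition~\ref{prop-localdifferentialcommutes}. Since all these ingredients are already in place, the result follows by the ``standard arguments in bicomplexes'' invoked in the statement, and I would simply cite the abstract comparison theorem (as in \cite{BottTu95}) rather than reproduce the zig-zag in full.
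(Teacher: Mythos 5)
Your proposal is correct and is essentially the paper's own argument made explicit: the paper's proof consists precisely of citing the Lemma (exact augmented rows, $\delta\hd_\tla=\hd_\tla\delta$) and invoking ``standard arguments in bicomplexes,'' which is exactly the spectral-sequence collapse onto the $p=-1$ column that you spell out, showing the augmentation is a quasi-isomorphism. One side remark in your last paragraph is inaccurate — the contracting homotopy from the Lemma's proof does \emph{not} commute with $\hd_\tla$, since multiplication by $\rho_k$ fails to commute with the de~Rham differential ($\dd\rho_k\neq 0$) — but this is harmless, because neither the spectral sequence nor the zig-zag chase requires a $\hd_\tla$-compatible homotopy: exactness of the rows alone suffices.
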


To the bicomplex $(C^{\grast,\grast}(\kU, \caF), \hd_\tla, \delta)$ one can associate a spectral sequence  $\{ E_r, \dd_r\}_{r \geq 0}$ induced by the natural filtration along $p$:
\begin{equation*}
F^p C^{\grast,\grast}(\kU, \caF) = \bigoplus_{k \geq p} \bigoplus_{q \geq 0} C^{k,q}(\kU, \caF)
\end{equation*}
The first term of this spectral sequence is
\begin{equation*}
E_1^{p,q} = H^{p,q}(C^{\grast,\grast}(\kU, \caF), \hd_\tla) = C^p(\kU, \caH^q)
\end{equation*}
where $\caH^q$ is the presheaf which associates to an open subset $U \subset \varM$ the cohomology space $\caH^q(U) = H^q(\Omega^\grast_\tla(U,\kg), \hd_\tla)$.

The second term of this spectral sequence is then
\begin{equation*}
E_2^{p,q} = H^p(\varM; \caH^q)
\end{equation*}
This is a Leray-Serre spectral sequence \cite{McCl00} for which the ``fibration'' is not just along ordinary spaces but along ``differential structures''.

Because $\kU$ is a good cover, one has
\begin{equation*}
H^q(\Omega^\grast_\tla(U_{i_0 \ldots i_p},\kg), \hd_\tla) = H^q(\kg; \kg),
\end{equation*}
the Lie algebra cohomology of the usual differential complex $(\kg \otimes \exter^\grast \kg^\ast ,\ds')$.

Remember that the restriction map $i_U^V$ for the presheaf $\caF$ makes use of the action of $\halpha_U^V$. The induced restriction map for the presheaf $\caH^q$ is obtained by the induced action of $\halpha_U^V$ in cohomology. This implies that the presheaf $\caH^q$ is not necessarily a constant presheaf.

\medskip
In \cite{Mack05a}, a spectral sequence based on the action of $\lieL$ is constructed to give cohomological information on a transitive Lie algebroid $\lieA$. It is based on a differential calculus similar to the one we use here except that it takes its values in a representation space of the Lie algebroid. These arguments can be adapted easily to the present context.

In Section~\ref{sec-applicationAtiyah} we shall use a similar argument to present another spectral sequence. Our construction is based on a Cartan operation of a finite dimensional Lie algebra \cite{Mass38}, which is more manageable that the infinite dimensional Lie algebra $\lieL$.

\section{Application to Atiyah Lie algebroids}
\label{sec-applicationAtiyah}

Let $\varP \xrightarrow{\pi} \varM$ be a $G$-principal bundle over $\varM$. In this section, we suppose that the group $G$ is connected. Denote by $\raR_g : \varP \rightarrow \varP$, $\raR_g(p) = p \cdotaction g$, the right action of $G$ on $\varP$ and by $\kg$ the Lie algebra of the Lie group $G$.
Define the two spaces
\begin{align*}
\Gamma_G(T\varP) &= \{ \sfX \in \Gamma(T\varP) \, / \, \raR_{g\,\ast}\sfX = \sfX \text{ for all } g \in G \}
\\
\Gamma_G(\varP, \kg) &= \{ v : P \rightarrow \kg \, / \, v(p \cdotaction g) = \Ad_{g^{-1}} v(p) \text{ for all } g \in G \}.
\end{align*}
The Lie bracket on $\Gamma(T\varP)$ restricts to $\Gamma_G(T\varP)$ and $\kg$ induces a Lie bracket on $\Gamma(\varP, \kg)$. These two spaces are $C^\infty(\varM)$-modules when one considers $f \in C^\infty(\varM)$ as the $\raR$-invariant function $\pi^\ast f \in C^\infty(\varP)$.
The map
\begin{equation*}
\iota(v)(p) = -v(p)^\varP_{|p} = \left( \frac{d}{dt} p \cdotaction e^{-t v(p)} \right)_{|t=0}
\end{equation*}
defines an inclusion $\iota : \Gamma_G(\varP, \kg) \rightarrow \Gamma_G(T\varP)$ of Lie algebras and $C^\infty(\varM)$-modules.

\begin{definition}
The short exact sequence of Lie algebras and $C^\infty(\varM)$-modules
\begin{equation*}
\xymatrix@1{{\algzero} \ar[r] & {\Gamma_G(\varP, \kg)} \ar[r]^-{\iota} & {\Gamma_G(T\varP)} \ar[r]^-{\pi_\ast} & {\Gamma(T \varM)} \ar[r] & {\algzero}}
\end{equation*}
defines $\Gamma_G(T\varP)$ as a transitive Lie algebroid over $\varM$, with kernel $\lieL = \Gamma_G(\varP, \kg) = \Gamma(\caL)$ where $\caL = \varP \times_\Ad \kg$ is the associated vector bundle. This is the Atiyah Lie algebroid associated to $\varP$ \cite{MR0086359}.
\end{definition}

In order to get compact notations, we denote by $(\Omega^\grast_\lie(\varP, \kg), \hd)$ the space of forms on this Lie algebroid $\Gamma_G(T\varP)$ with values in its kernel $\Gamma_G(\varP, \kg)$ and by $(\Omega^\grast_\lie(\varP), \hd_\lie)$ the space of forms with values in $C^\infty(\varM)$.

The local trivializations of the Lie algebroid $\Gamma_G(T\varP)$ can be explicitly obtained using local trivializations of the principal fiber bundle $\varP$. Let $\{(U_i, \varphi_i)\}$ be a system of trivializations of $\varP$ where $\varphi_i : \varP_{|U_i} \xrightarrow{\simeq} U_i \times G$ and denote by $s_i : U_i \rightarrow \varP$ with $s_i(x) = \varphi_i^{-1}(x,e)$ the associated local sections. One has $s_j(x) = s_i(x) g_{ij}(x)$ on any $U_{ij} = U_i \cap U_j \neq \ensvide$ where $g_{ij} : U_{ij} \rightarrow G$ are the associated transition functions. 

The isomorphism $\Psi_i : \Gamma(U_i \times \kg) \xrightarrow{\simeq} \Gamma_G(\varP,\kg)$ is such that $\Psi_i^{-1}(v) = s_i^\ast v$ for any $v \in \Gamma_G(\varP, \kg)$. Explicitly, one has $\Psi_i(\eta^i)(p) = \Ad_{g^{-1}} \eta^i(x)$ when $p=s_i(x)\cdotaction g$ for any $\eta^i \in \Gamma(U_i \times \kg)$.
With $p = s_i(x) \cdotaction g$, one defines $\nabla^{0, i}_{X\,|p} = T_{s_i(x)} \raR_{g} T_x s_i X_{| x} \in T_p \varP$. It is easy to show that $\raR_{g\,\ast} \nabla^{0, i}_{X} = \nabla^{0, i}_{X}$ for any $g \in G$ so that $\nabla^{0, i}_{X} \in \Gamma_G(TP)$. 
As a consequence, any $\sfX \in \Gamma_G(T\varP)$ is trivialized over $U_i$ as $X\oplus \gamma^i$ where $X \in \Gamma(U_i)$ is the restriction of a globally defined vector fields $X = \pi_\ast(\sfX)$ and $\gamma^i : U_i \rightarrow \kg$ represents the vertical part of $\sfX$ on $\varP$. More concretely, one has 
\begin{equation*}
\sfX = \nabla^{0, i}_{X} - \widehat{\gamma}^{i\,\varP} \in \Gamma_G(TP)
\end{equation*}
where for any $p = s_i(x) \cdotaction g$, one has $\widehat{\gamma}^i(p) = \Psi_i(\gamma^i)(p) = \Ad_{g^{-1}} \gamma^i(x)$ and $\widehat{\gamma}^{i\,\varP} = -\iota (\widehat{\gamma}^i)$. 

For any $p \in \varP_{|U_{ij}}$ with $U_{ij} = U_i \cap U_j \neq \ensvide$, a straightforward computation shows that
\begin{equation*}
\nabla^{0, i}_{X\,|p} = \nabla^{0, j}_{X\,|p} + \left[ g^{-1} g_{ij} \dd g_{ij}^{-1}(X) g \right]^\varP_{|p}
\end{equation*}
so that 
\begin{equation*}
\widehat{\gamma}^{i\,\varP}(p) = \widehat{\gamma}^{j\,\varP}(p) + \left[ g^{-1} g_{ij} \dd g_{ij}^{-1}(X) g \right]^\varP_{|p}.
\end{equation*}
This implies that
\begin{equation}
\label{eq-recollementgammai}
\gamma^i = g_{ij} \gamma^j g_{ij}^{-1} + g_{ij} \dd g_{ij}^{-1}(X),
\end{equation}
so that for Atiyah Lie algebroids one has
\begin{align}
\label{eq-alphachiatiyah}
\alpha_{j}^{i}(\gamma) &= g_{ij} \gamma g_{ij}^{-1}
&
\chi_{ij}(X) &= g_{ij} \dd g_{ij}^{-1}(X).
\end{align}

\medskip
We now suppose that $\kg$ is semisimple, so that its Killing form $k$ is non degenerate.

On any trivialization of $\Gamma_G(T\varP)$ associated to a trivialization $(U_i, \varphi_i)$ of $\varP$, we define $h^i_\loc(\gamma,\eta) = k(\gamma, \eta)$ for any $\gamma, \eta : U_i \rightarrow \kg$. Then, using the invariance of $k$ under the adjoint action of $G$ on $\kg$, we get that the $h^i_\loc$'s define a global metric $h$ on $\lieL$ (see \eqref{eq-relationtrivh} and \eqref{eq-alphachiatiyah}).

Let us introduce a fixed connection $\nabla$ on the Lie algebroid $\Gamma_G(T\varP)$, \textsl{i.e.} an ordinary connection on the principal fiber bundle $\varP$. The mixed basis on any trivialization of $\Gamma_G(T\varP)$ will be defined relative to this connection. 

We suppose that $\Gamma_G(T\varP)$ is inner orientable, \textsl{i.e.} that the vector bundle $\caL = \varP \times_\Ad \kg$ is orientable.

\begin{proposition}
\label{prop-atiyah-commutationofdifferentials}
In any local trivialization of $\Gamma_G(T\varP)$, the matrix with entries $h_{a b} = h_\loc(E_a, E_b)$ is constant so that $\sqrt{|h_\loc|}$ is locally constant.

Suppose that the Lie algebra $\kg$ is unimodular. Then for any $\omega \in \Omega^\grast_\lie(\varP)$ one has
\begin{equation*}
\int_\inner \hd_\lie \omega = \dd \int_\inner \omega
\end{equation*}
\end{proposition}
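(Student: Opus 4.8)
The plan is to prove the two assertions separately, beginning with the elementary one about the metric. For the first claim, I would recall that the inner metric on an Atiyah Lie algebroid is defined trivialization-wise by the Killing form, $h^i_\loc(\gamma,\eta) = k(\gamma,\eta)$. Since $k$ is a \emph{fixed bilinear form on} $\kg$, independent of the point of $U_i$, the matrix entries $h_{ab} = k(E_a, E_b)$ are literally constant functions on each $U_i$. Consequently $\det(h^i_\loc)$ is constant, and $\sqrt{|h^i_\loc|}$ is locally constant. (The relation \eqref{eq-relationtrivdeth} then forces $|\det(G^i_j)|$ to be locally constant on overlaps, which is consistent but not needed here.)

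For the commutation of inner integration with the differential, my strategy would be to work entirely in a single local trivialization $(U,\Psi,\nabla^0)$ with $U$ the support of a chart, decompose $\omega$ along the mixed basis, and track only the term of maximal inner degree $n$, since $\int_\inner$ annihilates all lower inner-degree terms by Definition~\ref{def-innerintegration}. Writing $\hd_\tla = \dd + \ds'$ (really $\hd_\lie = \dd + \ds$ on the $C^\infty$-valued complex), I would compute $\hd_\lie\omega$ locally and extract its maximal-inner-degree component. The key point is that $\int_\inner\omega$ is, by construction, the coefficient of $\sqrt{|h_\loc|}\,\theta^1\ordwedge\cdots\ordwedge\theta^n$, and this extraction commutes with the de~Rham differential $\dd$ precisely because $\sqrt{|h_\loc|}$ is locally constant (first claim), so $\dd$ passes through it freely. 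This yields the $\dd\int_\inner\omega$ side.

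The main obstacle is showing that the Chevalley--Eilenberg part $\ds$ contributes nothing to the maximal-inner-degree term of $\int_\inner\hd_\lie\omega$. The term of inner degree $n$ in $\ds\omega$ arises from applying $\ds$ to a piece of $\omega$ that already has inner degree $n-1$; concretely, $\ds(\theta^{a_1}\ordwedge\cdots\ordwedge\theta^{a_{n-1}})$ produces terms proportional to the structure constants $f^{c}_{ab}\,\theta^c\ordwedge\theta^{a_1}\ordwedge\cdots$, whose coefficient of $\theta^1\ordwedge\cdots\ordwedge\theta^n$ is (up to sign) $\sum_a f^{a}_{a\,b}$, i.e.\ the trace of $\ad_{E_b}$. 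This is exactly where the \emph{unimodularity} hypothesis enters: $\kg$ unimodular means $\tr(\ad_X)=0$ for all $X\in\kg$, so $\sum_a f^{a}_{a\,b}=0$ and the Chevalley--Eilenberg differential makes no maximal-inner-degree contribution. I would therefore isolate this trace, invoke unimodularity to kill it, and conclude $\int_\inner\ds\omega = 0$. Combining the two computations gives $\int_\inner\hd_\lie\omega = \int_\inner\dd\omega = \dd\int_\inner\omega$, as claimed.
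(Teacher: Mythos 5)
Your proposal is correct and follows essentially the same route as the paper's proof: both rest on the constancy of the Killing-form matrix for the first claim, and for the second on the key computation that the Chevalley--Eilenberg differential of an inner degree $n-1$ monomial produces the trace of an adjoint action (the paper's relation \eqref{eq-soninnerlessthatn}), which unimodularity kills, leaving only $\dd\omega^\maxinner_{\loc}$. The only cosmetic difference is that you extract the coefficient of $\sqrt{|h_\loc|}\,\theta^1 \ordwedge \cdots \ordwedge \theta^n$ while the paper decomposes along the mixed basis $\lfc^a$; the paper itself notes these agree at maximal inner degree, so the arguments coincide.
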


A Lie algebra is unimodular in our sense if the trace of its adjoint action vanishes. The usual definition of a unimodular group is that its left invariant Haar measure is equal to its right invariant Haar measure. When the group is finite dimensional and connected, this is equivalent to the fact that its adjoint action is of determinant $1$ or that its Lie algebra is unimodular in our sense \cite{Bour72b}.

There are well-known sufficient conditions for a group to be unimodular: compact, abelian, connected reductive or nilpotent…

This proposition is similar to Theorem~1.2 in \cite{MR1908998} or Theorem~5.2.2 in \cite{Kuba96a}. A key condition required in these Theorems is that the cross-section $\varepsilon \in \exter^n \lieL$ which defines the integral along the fiber be invariant oriented, which means that it is invariant under the $\exter^n \ad$ representation of $\lieA$ on $\exter^n \lieL$. Using the definition of $\varepsilon$ given by \eqref{eq-epsilonkubarski} in our context, this is equivalent to both $|h_\loc|$ being locally constant and the Lie algebra being unimodular. In the following proof, we will only use these two conditions. The first one is satisfied when $h$ is defined as the Killing form, but this is not a necessary condition.

\begin{proof}
The first assertion is a direct consequence of the fact that in any trivialization of $\Gamma_G(T\varP)$ the metric $h$ is defined using the (constant) Killing form $k$.

For the second assertion, one first shows that
\begin{equation}
\label{eq-soninnerlessthatn}
\ds (\theta^{a_1} \ordwedge \cdots \ordwedge \theta^{a_{n-1}}) = (-1)^{n} \tr (C_{a_n})\; \theta^{a_1} \ordwedge \cdots \ordwedge \theta^{a_n}
\end{equation}
where $\ds$ is the Chevalley-Eilenberg differential on $\exter^\grast \kg^\ast$, for which $\ds \theta^c = - \frac{1}{2} C^{c}_{a b} \theta^a \ordwedge \theta^b$, $C_{a_n}$ is the matrix $(C_{a_n a}^{b})_{a,b}$ and $a_n$ is the missing index in the fixed multi-index $(a_1, \dots, a_{n-1})$ with $a_k \neq a_\ell$ for $k \neq \ell$.

Using this result, let us now collect the factor of $\sqrt{|h_\loc|} \lfc^{1} \ordwedge \cdots \ordwedge \lfc^{n}$ in $(\dd + \ds) \omega_{\loc}$ when one uses the decomposition 
\begin{equation*}
\omega_{\loc} = (-1)^n \omega^\maxinner_{\loc} \sqrt{|h_\loc|} \lfc^{1} \ordwedge \cdots \ordwedge \lfc^{n} + \omega^R
\end{equation*}
with $\omega^\maxinner_{\loc} \in \Omega^\grast(U)$ and $\omega^R$ containing only terms of degrees $< n$ in the $\lfc^a$'s.

$\dd \sqrt{|h_\loc|} \lfc^{1} \ordwedge \cdots \ordwedge \lfc^{n}$ and $\dd \omega^R$ do not contribute because $\sqrt{|h_\loc|}$ is locally constant and the degrees do not match for other terms. When $\kg$ is unimodular, \eqref{eq-soninnerlessthatn} implies that $\ds \sqrt{|h_\loc|} \lfc^{1} \ordwedge \cdots \ordwedge \lfc^{n}$ and $\ds \omega^R$ do not contribute. The remaining term is then $\dd \omega^\maxinner_{\loc}$, which is globally $\dd \int_\inner \omega$.
\end{proof}

\begin{proposition}
Suppose that $h$ is constructed using the Killing form of a semi-simple unimodular Lie algebra $\kg$, suppose that $\varM$ is compact without boundary and suppose that $\Gamma_G(T\varP)$ is orientable. Then for any $\omega, \eta \in \Omega^\grast_\lie(\varP, \kg)$ such that $\hd \eta = 0$ one has $\langle \hd \omega, \eta \rangle = 0$.

There is a coupling in cohomology $\langle -, - \rangle : H^r \times H^{m+n-r} \rightarrow \gC$,
where $H^r = H^r(\Omega^\grast_\lie(\varP, \kg), \hd)$ is the cohomology of forms with values in the kernel.
\end{proposition}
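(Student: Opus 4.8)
The plan is to prove the vanishing statement by an integration-by-parts argument and then to deduce the cohomological coupling as a formal consequence. The central ingredient I would first establish is a graded Leibniz rule relating the two differentials through the extended pairing, namely
\[
\hd_\lie\, h(\omega, \eta) = h(\hd\omega, \eta) + (-1)^p h(\omega, \hd\eta)
\]
for $\omega \in \Omega^p_\lie(\varP,\kg)$ and $\eta \in \Omega^\grast_\lie(\varP,\kg)$. This identity rests entirely on the infinitesimal invariance $\rho(\kX)\cdotaction h(\gamma,\eta) = h(\kX\cdotaction\gamma,\eta) + h(\gamma,\kX\cdotaction\eta)$ of the inner metric under the adjoint action of $\lieA = \Gamma_G(T\varP)$ on $\lieL$. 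In a local trivialization this reduces to the $\ad$-invariance $k([\zeta,\gamma],\eta)+k(\gamma,[\zeta,\eta])=0$ of the Killing form together with the ordinary Leibniz rule for the vector-field part of the action; since $h$ is built pointwise from the constant Killing form, the invariance is immediate, and the Koszul formulas defining $\hd$ and $\hd_\lie$ upgrade it to the displayed graded identity on forms.

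With the Leibniz rule in hand, I would take $\omega$ together with $\eta$ satisfying $\hd\eta=0$ and write $h(\hd\omega,\eta) = \hd_\lie\, h(\omega,\eta)$. Hence
\[
\langle\hd\omega,\eta\rangle = \int_\lieA h(\hd\omega,\eta) = \int_\varM \int_\inner \hd_\lie\, h(\omega,\eta).
\]
Here the hypotheses enter decisively: because $\kg$ is unimodular and $h$ is the Killing form, so that $\sqrt{|h_\loc|}$ is locally constant, Proposition~\ref{prop-atiyah-commutationofdifferentials} gives $\int_\inner \hd_\lie\, h(\omega,\eta) = \dd\int_\inner h(\omega,\eta)$. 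Since $\int_\inner h(\omega,\eta)$ is an ordinary de~Rham form on $\varM$ and $\varM$ is compact without boundary, Stokes' theorem yields $\int_\varM \dd\bigl(\int_\inner h(\omega,\eta)\bigr)=0$, whence $\langle\hd\omega,\eta\rangle=0$.

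For the cohomological coupling I would check independence of the choice of representatives. Recall that $\int_\lieA$ only detects the component of total degree $m+n$, which forces the pairing to couple $H^r$ with $H^{m+n-r}$. Given closed $\omega,\eta$ and exact perturbations $\omega+\hd\beta$ and $\eta+\hd\xi$, expanding $\langle\omega+\hd\beta,\eta+\hd\xi\rangle$ produces the cross terms $\langle\hd\beta,\eta\rangle$, $\langle\omega,\hd\xi\rangle$ and $\langle\hd\beta,\hd\xi\rangle$. The first and third vanish by the statement just proved, since $\eta$ and $\hd\xi$ are closed; the middle term I would reduce to the same statement using the graded symmetry $h(\omega,\hd\xi)=(-1)^{r(m+n-r)}h(\hd\xi,\omega)$, which gives $\langle\omega,\hd\xi\rangle = (-1)^{r(m+n-r)}\langle\hd\xi,\omega\rangle = 0$ because $\omega$ is closed. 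Thus $\langle-,-\rangle$ descends to a well-defined pairing $H^r \times H^{m+n-r}\to\gC$.

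The main obstacle is the first step, namely establishing the graded Leibniz rule and, in particular, verifying that the adjoint action of $\lieA$ on $\lieL$ is compatible with the Killing-form metric $h$; the nondegeneracy used elsewhere is irrelevant here, but the $\ad$-invariance is exactly what makes $h$ a chain map between the $\lieL$-valued and the $C^\infty(\varM)$-valued complexes. Everything downstream is then a direct assembly of that identity, Proposition~\ref{prop-atiyah-commutationofdifferentials}, and Stokes' theorem.
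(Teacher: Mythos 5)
Your proposal is correct and is essentially the paper's own argument: the paper disposes of this proposition with ``a direct computation similar to the one in the proof of Proposition~\ref{prop-atiyah-commutationofdifferentials}'', and your proof is exactly that computation, organized modularly — the $\ad$-invariance of the Killing form packaged as the graded Leibniz rule $\hd_\lie\, h(\omega,\eta)=h(\hd\omega,\eta)+(-1)^p h(\omega,\hd\eta)$ (which makes $h$ a chain map from the $\lieL$-valued to the scalar-valued complex), followed by applying Proposition~\ref{prop-atiyah-commutationofdifferentials} to the $C^\infty(\varM)$-valued form $h(\omega,\eta)$ and Stokes' theorem on the compact boundaryless base, with orientability of $\Gamma_G(T\varP)$ supplying both the inner orientation needed for $\int_\inner$ and the orientation of $\varM$. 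Your well-definedness check for the induced pairing $H^r\times H^{m+n-r}\rightarrow\gC$, handling the cross term $\langle\omega,\hd\xi\rangle$ via the graded symmetry $h(\omega,\eta)=(-1)^{pq}h(\eta,\omega)$ recorded in the paper, is the standard completion the paper leaves implicit and is likewise correct.
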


\begin{proof}
The proof is a direct computation similar to the one in the proof of Proposition~\ref{prop-atiyah-commutationofdifferentials}.
\end{proof}

\smallskip
From now on,  we assume that the structure group $G$ is connected, simply connected, semi-simple, unimodular and of dimension $n$. In other words, $G$ is the connected and simply connected group associated to a semi-simple unimodular $n$-dimensionnel Lie algebra $\kg$.

\smallskip
The space $\kg_\equ = \{ \xi^\varP \oplus \xi \ / \ \xi \in \kg \}$ is a sub Lie algebra of $\tla(\varP, \kg)$, where $\xi^\varP \in \Gamma(T\varP)$ is the fundamental vector field associated to $\xi \in \kg$ for the right action of $G$ on $\varP$. $\kg_\equ$ defines a Cartan operation on the differential complex $(\Omega^\grast_\tla(\varP,\kg), \hd_\tla)$. Denote by $(\Omega^\grast_\tla(\varP,\kg)_{\kg_\equ}, \hd_\tla)$ the differential graded subcomplex of basic elements.

\begin{proposition}[\cite{Mass38}]
\label{prop-identificationdifferentialcalculusAtiyah}
$(\Omega^\grast_\lie(\varP, \kg), \hd)$ and $(\Omega^\grast_\tla(\varP,\kg)_{\kg_\equ}, \hd_\tla)$ are isomorphic as differential graded complexes.

The same is true for $\kg_\equ$-basic forms in $\Omega^\grast_\tla(\varP)$ and $\Omega^\grast(\lieA)$.
\end{proposition}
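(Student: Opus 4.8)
The plan is to build an explicit isomorphism by restricting $\kg_\equ$-basic forms on the trivial algebroid to invariant vector fields, and then to verify well-definedness, bijectivity and compatibility with the differentials. Everything hinges on the inclusion
\[
j:\Gamma_G(T\varP)\hookrightarrow\tla(\varP,\kg),\qquad j(\sfX)=\sfX\oplus 0,
\]
which is a morphism of Lie algebras: for invariant $\sfX,\sfY$ one has $[\sfX\oplus0,\sfY\oplus0]=[\sfX,\sfY]\oplus0$ with $[\sfX,\sfY]$ again invariant. Moreover $j$ is compatible with the anchors, and the action of $\sfX\oplus0$ on a kernel element $v\in\Gamma(\varP\times\kg)$ is $\sfX\cdotaction v$, which on equivariant $v$ coincides with the adjoint action of $\sfX\in\Gamma_G(T\varP)$ on $\Gamma_G(\varP,\kg)$ (the fundamental-field terms drop out by invariance of $\sfX$). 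I therefore set
\[
(\Phi\tilde\omega)(\sfX_1,\dots,\sfX_q)=\tilde\omega(\sfX_1\oplus0,\dots,\sfX_q\oplus0),\qquad \sfX_1,\dots,\sfX_q\in\Gamma_G(T\varP),
\]
and the same prescription in the $C^\infty(\varM)$-valued case.

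First I would check that $\Phi$ takes values in $\Omega^\grast_\lie(\varP,\kg)$. The governing computation is that, for $\xi\in\kg$ and $\sfX$ invariant, the bracket in $\tla(\varP,\kg)$ gives $[\xi^\varP\oplus\xi,\sfX\oplus0]=[\xi^\varP,\sfX]\oplus(-\sfX\cdotaction\xi)=0$, because $\sfX$ is $G$-invariant and $\xi$ is constant. Applying the derivation $L_{\xi^\varP\oplus\xi}$ to $\tilde\omega(\sfX_1\oplus0,\dots)$ and using the invariance $L_{\xi^\varP\oplus\xi}\tilde\omega=0$ together with this bracket identity, every term vanishes, so $L_{\xi^\varP\oplus\xi}\bigl(\tilde\omega(\sfX_1\oplus0,\dots)\bigr)=0$ for all $\xi$. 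Since for a $\kg$-valued function $v$ one has $L_{\xi^\varP\oplus\xi}v=\xi^\varP\cdotaction v+[\xi,v]$, and this vanishes for all $\xi$ precisely when $v$ is $\Ad$-equivariant, the value lies in the kernel $\Gamma_G(\varP,\kg)$ (here connectedness of $G$ is used to pass from the infinitesimal to the honest equivariance condition). That $\Phi\tilde\omega$ is $C^\infty(\varM)$-multilinear and alternating is automatic from the $C^\infty(\varP)$-multilinearity of $\tilde\omega$ and $C^\infty(\varM)=\pi^\ast C^\infty(\varM)\subset C^\infty(\varP)$.

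The core of the argument is bijectivity, which rests on the canonical fibrewise decomposition
\[
T_p\varP\oplus\kg=(T_p\varP\oplus 0)\oplus\{(\xi^\varP_{|p},\xi)\;:\;\xi\in\kg\},\qquad p\in\varP,
\]
a genuine direct sum because freeness of the action makes $\xi\mapsto\xi^\varP_{|p}$ injective and the dimensions add up. For injectivity, a basic $\tilde\omega$ is horizontal, hence annihilates any argument in $\kg_\equ$; since invariant vector fields span $T_p\varP$ pointwise, the hypothesis $\Phi\tilde\omega=0$ forces $\tilde\omega_p$ to vanish on a spanning set of the fibre, so $\tilde\omega=0$. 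For surjectivity I would extend $\omega\in\Omega^q_\lie(\varP,\kg)$ by $\tilde\omega:=\mathrm{pr}^\ast\omega$, where $\mathrm{pr}(\sfZ_{|p}\oplus w)=\sfZ_{|p}-w^\varP_{|p}$ is the fibrewise projection onto $T_p\varP$ along $\kg_\equ|_p$, and where $\omega$ is read on $T_p\varP$ through the $G$-equivariant identifications $\varA_{\pi(p)}\cong T_p\varP$ and $\caL_{\pi(p)}\cong\kg$ furnished by $p$. By construction $\tilde\omega$ is horizontal and satisfies $\Phi\tilde\omega=\omega$ (since $\mathrm{pr}$ fixes $\sfX\oplus0$ for invariant $\sfX$); its smoothness and invariance follow from the $G$-equivariance of $\mathrm{pr}$ and of these identifications, the tensor $\omega_{\pi(p)}$ being independent of the chosen $p$ over its base point.

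Finally, the identity $\Phi\circ\hd_\tla=\hd\circ\Phi$ on basic forms is naturality of the Koszul formula under $j$: evaluating $(\hd_\tla\tilde\omega)$ on arguments $\sfX_i\oplus0$ and using $[\sfX_i\oplus0,\sfX_j\oplus0]=[\sfX_i,\sfX_j]\oplus0$ together with the matching of the kernel actions reproduces, term by term, $(\hd\,\Phi\tilde\omega)$ on $\sfX_0,\dots,\sfX_q$. The $\kg_\equ$-basic statement comparing $\Omega^\grast_\tla(\varP)$ with $\Omega^\grast(\lieA)$ is obtained verbatim, with the kernel $\Gamma_G(\varP,\kg)$ replaced by the invariant functions $C^\infty(\varM)=C^\infty_G(\varP)$ and the adjoint action replaced by the trivial one. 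I expect the main obstacle to be the surjectivity step, specifically verifying that the projected extension $\mathrm{pr}^\ast\omega$ is smooth and genuinely invariant; this is exactly where the $G$-equivariance of the fibrewise decomposition and the base-point independence of $\omega_{\pi(p)}$ must be used with care.
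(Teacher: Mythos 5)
Your proposal is correct and takes essentially the same route as the paper's source: your map $\Phi$ is exactly the isomorphism $\lambda$ recalled in the proof of Lemma~\ref{lemma-pullbackbasicforms} (evaluation of a $\kg_\equ$-basic form on the lifts $\sfX \oplus 0 \in \caN$ of invariant vector fields), and your fibrewise direct sum $T_p\varP \oplus \kg = (T_p\varP \oplus 0) \oplus \kg_{\equ|p}$ is precisely the pointwise counterpart of the module decomposition $\caN = \Gamma_G(T\varP) \oplus \caZ$ together with the fact that $\caN$ generates $\tla(\varP,\kg)$ over $C^\infty(\varP)$. The verifications you supply --- equivariance of the values via connectedness of $G$, $G$-equivariance of the projection $\mathrm{pr}$ for the inverse, and naturality of the Koszul formula for $\hd_\tla$ versus $\hd$ --- are exactly the details the paper delegates to \cite{Mass38}.
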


Let us associate to a global form $\omega \in \Omega^\grast_\lie(\varP, \kg)$ its family of local forms $\{\omega_{\loc}^i\}_{i \in I}$ with $\omega_{\loc}^i \in \Omega^\grast_\tla(U_i,\kg)$ satisfying \eqref{eq-relationtrivforms}. Let $\widehat{\omega} \in \Omega^\grast_\tla(\varP,\kg)_{\kg_\equ}$ be the $\kg_\equ$-basic form corresponding to $\omega \in \Omega^\grast_\lie(\varP, \kg)$ in the identification of Proposition~\ref{prop-identificationdifferentialcalculusAtiyah}.

\begin{lemma}
\label{lemma-pullbackbasicforms}
One has $\omega_{\loc}^i = s_i^\ast \widehat{\omega} \in \Omega^\grast_\tla(U_i,\kg)$.
\end{lemma}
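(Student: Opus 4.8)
The plan is to evaluate both sides on an arbitrary family $X_k\oplus\gamma_k\in\tla(U_i,\kg)$, $1\le k\le q$, and to check that the resulting $\kg$-valued functions on $U_i$ agree pointwise. Fix $x\in U_i$ and write $p=s_i(x)=s_i(x)\cdotaction e$. Since $\Psi_i^{-1}(v)=s_i^\ast v$, the left-hand side is $\omega_{\loc}^i(X_1\oplus\gamma_1,\dots,X_q\oplus\gamma_q)(x)=\bigl[\omega(S_i(X_1\oplus\gamma_1),\dots,S_i(X_q\oplus\gamma_q))\bigr](p)$, the value at $p$ of the equivariant function produced by $\omega$. Reading off the explicit trivialization formulas at $g=e$ gives $\nabla^{0,i}_{X\,|p}=T_xs_i\,X_{|x}$ and $\widehat\gamma^i(p)=\gamma(x)$, so that the invariant field $S_i(X\oplus\gamma)=\nabla^{0,i}_X-\widehat\gamma^{i\,\varP}$ takes at $p$ the value $T_xs_i\,X_{|x}-\gamma(x)^\varP_{|p}$.

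For the right-hand side, the pullback $s_i^\ast$ along the section sends $X\oplus\gamma$ to the element $(T_xs_i\,X_{|x})\oplus\gamma(x)$ of the fibre of $\tla(\varP,\kg)$ over $p$; as $\widehat\omega$ is $C^\infty(\varP)$-multilinear its value there depends only on these fibre data, so $(s_i^\ast\widehat\omega)(X_1\oplus\gamma_1,\dots)(x)=\widehat\omega_{|p}\bigl((T_xs_i\,X_1)\oplus\gamma_1(x),\dots\bigr)$. The key step is to compare the two arguments in the fibre over $p$: in the $k$-th slot their difference is $(T_xs_i\,X_k-\gamma_k(x)^\varP)\oplus 0-(T_xs_i\,X_k)\oplus\gamma_k(x)=-\bigl(\gamma_k(x)^\varP\oplus\gamma_k(x)\bigr)$, which is (minus) an element of $\kg_\equ$. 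Because $\widehat\omega$ is $\kg_\equ$-basic, hence $\kg_\equ$-horizontal, subtracting these terms slot by slot leaves its value unchanged, so $\widehat\omega_{|p}\bigl((T_xs_i\,X_k)\oplus\gamma_k(x)\bigr)=\widehat\omega_{|p}\bigl((T_xs_i\,X_k-\gamma_k(x)^\varP)\oplus 0\bigr)$. The latter arguments are exactly the values at $p$ of the invariant fields $S_i(X_k\oplus\gamma_k)\in\Gamma_G(T\varP)$, embedded into $\tla(\varP,\kg)$ as $\sfX\oplus 0$. By the identification of Proposition~\ref{prop-identificationdifferentialcalculusAtiyah}, $\widehat\omega$ evaluated on such invariant arguments reproduces the equivariant function $\omega(S_i(X_1\oplus\gamma_1),\dots)$, whose value at $p$ is precisely the left-hand side computed above. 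This yields $s_i^\ast\widehat\omega=\omega_{\loc}^i$.

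The main obstacle is to make the two inputs used here fully explicit. First, one must spell out the identification of Proposition~\ref{prop-identificationdifferentialcalculusAtiyah}, namely that $\widehat\omega(\sfX_1\oplus 0,\dots,\sfX_q\oplus 0)$ equals the $\kg$-valued equivariant function $\omega(\sfX_1,\dots,\sfX_q)$ for invariant fields $\sfX_k\in\Gamma_G(T\varP)$. Second, one must justify that $\kg_\equ$-horizontality allows one to pass between the $T\varP$-slot representative $\iota(v)=-v^\varP$ of a kernel element and its $\kg$-slot representative $0\oplus v$, the two differing exactly by the $\kg_\equ$-type element $v^\varP\oplus v$. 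Once these are secured, the remainder is the routine bookkeeping of fundamental vector fields together with the identity $\nabla^{0,i}_{X\,|s_i(x)}=T_xs_i\,X_{|x}$ recorded above.
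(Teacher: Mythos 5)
Your proof is correct and takes essentially the same route as the paper: there the local value $\omega^i_{\loc}(X\oplus\gamma^i)(x)$ is likewise rewritten via the identification $\lambda$ as $\widehat{\omega}_{s_i(x)}(\widehat{X}_{|s_i(x)})$ for a lift $\widehat{X}\in\caN$, and the lift $S_i(X\oplus\gamma^i)\oplus 0$ is replaced by $\nabla^{0,i}_X\oplus\widehat{\gamma}^i$, the two differing exactly by the element $\widehat{\gamma}^{i\,\varP}\oplus\widehat{\gamma}^i\in\caZ$ --- which is precisely your slot-by-slot $\kg_\equ$-horizontality swap, performed pointwise at $p=s_i(x)$. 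The only cosmetic difference is that the paper absorbs horizontality into the lift-independence of $\lambda$ (with a globally defined equivariant lift over $\varP_{|U_i}$), while you invoke it explicitly at the point using the constant element $\gamma_k(x)^\varP\oplus\gamma_k(x)\in\kg_\equ$ and $C^\infty(\varP)$-multilinearity.
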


\begin{proof}
Let us first recall some key features of the identification of the differential calculus $(\Omega^\grast_\lie(\varP, \kg), \hd)$ with $(\Omega^\grast_\tla(\varP,\kg)_{\kg_\equ}, \hd_\tla)$. In \cite{Mass38} a short exact sequence of Lie algebras and $C^\infty(\varM)$-modules
\begin{equation*}
\xymatrix@1{{\algzero} \ar[r] & {\caZ} \ar[r] & {\caN} \ar[r]^-{\rho_\varP} & {\Gamma_G(T\varP)} \ar[r] & {\algzero}}
\end{equation*}
is used where $\caZ$ is defined to be the $C^\infty(\varP)$-module generated by $\kg_\equ$ and $\caN = \Gamma_G(T\varP) \oplus \caZ \subset \tla(\varP, \kg)$. It is shown that $\caN$ generates the space $\tla(\varP, \kg)$ as a $C^\infty(\varP)$-module.
The isomorphism $\lambda : \Omega^\grast_\tla(\varP,\kg)_{\kg_\equ} \rightarrow \Omega^\grast_\lie(\varP, \kg)$ is explicitly defined as follows. For any $\widehat{\omega} \in \Omega^r_\tla(\varP,\kg)_{\kg_\equ}$, for any $\sfX_1, \dots, \sfX_r \in \Gamma_G(T\varP)$, denote by $\widehat{X}_1, \dots, \widehat{X}_r \in \caN$ any family such that $\rho_\varP(\widehat{X}_i) = \sfX_i$, then the map $p \mapsto \lambda(\widehat{\omega})(\sfX_1, \dots, \sfX_r)(p) = \widehat{\omega}(\widehat{X}_1, \dots, \widehat{X}_r)(p) \in \kg$ is a $G$-equivariant map.

In order to simplify the exposition, we prove the lemma for $1$-forms. The algebraic machinery is the same for $p$-forms. For any $\sfX \in \Gamma_G(T\varP)$ and any $x \in U_i$, by definition one has
\begin{equation*}
\omega_{\loc}^i (X \oplus \gamma^i)(x) 
= \Psi_{i}^{-1} (\omega(\sfX))(x) 
= \omega(\sfX)(s_i(x))
= \widehat{\omega}_{s_i(x)}( \widehat{X}_{|s_i(x)})
\end{equation*}
for any $\widehat{X} \in \caN$ such that $\rho_\varP(\widehat{X}) = \sfX$. On $\varP_{|U_i}$, let us take $\widehat{X} = \nabla^{0, i}_{X} - \widehat{\gamma}^{i\,\varP} + (\widehat{\gamma}^{i\,\varP} \oplus \widehat{\gamma}^{i}) = \nabla^{0, i}_{X} \oplus \widehat{\gamma}^{i}$ where $\widehat{\gamma}^{i\,\varP} \oplus \widehat{\gamma}^{i} \in \caZ$ \cite{Mass38}. Then $\widehat{X}_{|s_i(x)} = (s_{i\, \ast} X)_{|x} \oplus \gamma^i(x)$ by construction of $\nabla^{0, i}_{X}$ and $\widehat{\gamma}^i$. This gives
\begin{equation*}
\omega_{\loc}^i (X \oplus \gamma^i) = (s_i^\ast \widehat{\omega})(X \oplus \gamma^i)
\end{equation*}
\end{proof}

It is possible to show directly that the proposed family of local forms $\{ s_i^\ast \widehat{\omega} \}_{i \in I}$ defined in this lemma satisfies \eqref{eq-relationtrivforms} using the basicity of $\widehat{\omega}$. 

Let $\sfX \in \Gamma_G(T\varP)$ be trivialized over $U_i$ as $X \oplus \gamma^i$ for a global vector field $X$ on $\varM$. For simplicity, let us write $s=s_i$ and $g = g_{ij}$ so that $s_j = s g$. With these notations, on $U_{ij} = U_i \cap U_j \neq \ensvide$ one has $\gamma^i = \Ad_{g} \gamma^j + g \dd g^{-1}(X)$. Then one has
\begin{align*}
(s_j^\ast \widehat{\omega})_{| x}(X_{|x} \oplus \gamma^j(x)) 
&= \widehat{\omega}_{|s_j(x)} ( s_{j\,\ast} X_{|x} \oplus \gamma^j(x))
\\
&= \widehat{\omega}_{|s(x) g(x)} ( \raR_{g\,\ast} s_{\ast} X_{|x} + \left[ g^{-1} \dd g(X) \right]^\varP_{|s(x) g(x)} \oplus \gamma^j(x))
\end{align*}
using standard results in differential geometry \cite{KobaNomi96a}.

The $\kg_\equ$-horizontality property of $\widehat{\omega}$ implies that at the level of the Lie group $G$ (connected and simply connected) one has
\begin{equation*}
\widehat{\omega}_{|s(x) g(x)} \left( \left[ g^{-1} \dd g(X) \right]^\varP_{|s(x) g(x)} \oplus \left[ g^{-1} \dd g(X) \right]_{|x} \right) = 0
\end{equation*}
so that
\begin{align*}
\omega_{\loc | x}^j(X_{|x} \oplus \gamma^j(x)) 
&= \widehat{\omega}_{|s(x) g(x)} ( \raR_{g\,\ast} s_{\ast} X_{|x} \oplus \gamma^j(x) - \left[ g^{-1} \dd g(X) \right]_{|x})
\\
&= \widehat{\omega}_{|s(x) g(x)} ( \raR_{g\,\ast} s_{\ast} X_{|x} \oplus \Ad_{g^{-1}}\gamma^i(x))
\end{align*}
The $\kg_\equ$-invariance property of $\widehat{\omega}$ implies that
\begin{equation*}
\widehat{\omega}_{|s(x) g(x)} ( \raR_{g\,\ast} s_{\ast} X_{|x} \oplus \Ad_{g^{-1}}\gamma^i(x))
=
\Ad_{g^{-1}}\, \widehat{\omega}_{|s(x)} ( s_{\ast} X_{|x} \oplus \gamma^i(x))
\end{equation*}
so that
\begin{equation*}
\omega_{\loc | x}^j(X_{|x} \oplus \gamma^j(x))  = \Ad_{g^{-1}}\, \omega_{\loc | x}^i ( X_{|x} \oplus \gamma^i(x))
\end{equation*}
which is \eqref{eq-relationtrivforms}.

The identifications between these differential calculi can be summarized in the following diagram:
\begin{equation*}
\xymatrix@R=10pt@C=10pt{
 & & & {\text{\parbox{5em}{\centering Trivial Lie\\ Algebroids}}} & \\
{\Omega^\grast_\lie(\varP, \kg)} \ar@{^{(}->}[rrr]_-{\text{inclusion}} \ar[dddrrr]_(0.45)*!/^-3pt/{\text{\scriptsize trivialization}} 
& & & 
{\Omega^\grast_\tla(\varP,\kg)_{\kg_\equ}} \ar[ddd]^-{\{ s_i^\ast\}} \ar@/_1.2pc/[lll]_-{\lambda}
& {\text{Global description}} \\
& & & & \\
& & & & \\
& & & {\prod_{i\in I} \Omega^\grast_\tla(U_i,\kg)} & {\text{Local description}}
}
\end{equation*}

\medskip

\begin{corollary}
\label{cor-pullbackvolumeform}
The $\kg_\equ$-basic form in $\Omega^\grast_\tla(\varP)$ corresponding to $\omega_{h,\lfc} \in \Omega^\grast(\lieA)$ is
\begin{equation*}
\widehat{\omega}_{k,\nabla} = (-1)^n \sqrt{|k|} (\omega_\nabla^{1} - \theta^1) \ordwedge \cdots \ordwedge (\omega_\nabla^{n} - \theta^n)
\end{equation*}
where $\omega_\nabla = \omega_\nabla^{a} \otimes E_a \in \Omega^\grast(\varP) \otimes \kg$ is the (ordinary) connection $1$-form on $\varP$ associated to $\nabla$.
\end{corollary}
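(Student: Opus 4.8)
The plan is to verify directly that the proposed form $\widehat{\omega}_{k,\nabla}$ is $\kg_\equ$-basic and then to compute its local pullbacks $s_i^\ast \widehat{\omega}_{k,\nabla}$, showing that they coincide with the local trivializations of $\omega_{h,\lfc}$. By Lemma~\ref{lemma-pullbackbasicforms} in its $C^\infty(\varM)$-valued version (covered by the last sentence of Proposition~\ref{prop-identificationdifferentialcalculusAtiyah}), the basic form corresponding to a global form has local trivializations given by the $s_i^\ast$; so once basicity and the pullback match are established, $\widehat{\omega}_{k,\nabla}$ is identified as the basic form corresponding to $\omega_{h,\lfc}$. Throughout I abbreviate $\beta^a = \omega_\nabla^a - \theta^a \in \Omega^1_\tla(\varP)$, so that $\widehat{\omega}_{k,\nabla} = (-1)^n \sqrt{|k|}\, \beta^1 \ordwedge \cdots \ordwedge \beta^n$, and I record once and for all that $\sqrt{|k|}$ is a genuine constant, since the Killing form has constant entries $k_{ab} = k(E_a, E_b)$ in the fixed basis $\{E_a\}$.

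For basicity I would treat horizontality and invariance separately. Horizontality is immediate: for a generator $\xi^\varP \oplus \xi \in \kg_\equ$ one has $\omega_\nabla^a(\xi^\varP) = \xi^a$ (the defining property of the principal connection form) and $\theta^a(\xi^\varP \oplus \xi) = \xi^a$, so $\iota_{\xi^\varP \oplus \xi}\beta^a = 0$ for every $a$; hence each factor, and therefore the product, is $\kg_\equ$-horizontal. For invariance I would use Cartan's formula $L_{\xi^\varP\oplus\xi} = \iota_{\xi^\varP\oplus\xi}\hd_\tla + \hd_\tla\,\iota_{\xi^\varP\oplus\xi}$ together with horizontality to reduce to $L_{\xi^\varP\oplus\xi}\beta^a = \iota_{\xi^\varP\oplus\xi}\hd_\tla\beta^a$. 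Using the structure equation $\dd\omega_\nabla^a = -\tfrac12 C^a_{bc}\,\omega_\nabla^b \ordwedge \omega_\nabla^c + \Omega^a$ on $\varP$, the Chevalley-Eilenberg identity $\ds\theta^a = -\tfrac12 C^a_{bc}\,\theta^b \ordwedge \theta^c$, and the horizontality of the curvature $\Omega^a$, a short contraction gives $L_{\xi^\varP\oplus\xi}\beta^a = -C^a_{bc}\,\xi^b\,\beta^c$; that is, the factors rotate among themselves in the adjoint representation.

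The decisive step is then the invariance of the product. Since $L_{\xi^\varP\oplus\xi}$ is a degree-zero derivation of the wedge product, $L_{\xi^\varP\oplus\xi}(\beta^1 \ordwedge \cdots \ordwedge \beta^n) = \sum_a \beta^1 \ordwedge \cdots \ordwedge (L_{\xi^\varP\oplus\xi}\beta^a) \ordwedge \cdots \ordwedge \beta^n$, and upon inserting $-C^a_{bc}\xi^b\beta^c$ in the $a$-th slot the wedge survives only for $c=a$, producing the overall factor $-\big(\sum_a C^a_{ba}\big)\xi^b = -\tr(\ad_{E_b})\,\xi^b$. This vanishes because $\kg$ is semisimple, hence unimodular. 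I expect this to be the only genuinely nontrivial point: the reliance on $\tr\ad = 0$ is the crux, it is exactly the mechanism already exploited in \eqref{eq-soninnerlessthatn} and Proposition~\ref{prop-atiyah-commutationofdifferentials}, and it is the reason the semisimplicity hypothesis is needed here.

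Finally I would compute the pullback. Using the explicit description of $s_i^\ast$ from the proof of Lemma~\ref{lemma-pullbackbasicforms}, where $X \oplus \gamma^i$ is sent to $(s_{i\,\ast}X)\oplus\gamma^i$, one obtains $s_i^\ast\beta^a(X\oplus\gamma^i) = (s_i^\ast\omega_\nabla^a)(X) - (\gamma^i)^a = A_i^a(X) - (\gamma^i)^a$, since the local gauge potential is $A_i = s_i^\ast\omega_\nabla$; this is precisely $\lfc_i^a(X\oplus\gamma^i)$, so $s_i^\ast\beta^a = \lfc_i^a$. Taking wedges and using that here $h$ is the Killing form, so $\sqrt{|h^i_\loc|} = \sqrt{|k|}$, yields $s_i^\ast\widehat{\omega}_{k,\nabla} = (-1)^n\sqrt{|h^i_\loc|}\,\lfc_i^1 \ordwedge \cdots \ordwedge \lfc_i^n$, which is exactly the local trivialization $(\omega_{h,\lfc})_{\loc}^i$ of Proposition~\ref{prop-globalinnerform}. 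Having checked that $\widehat{\omega}_{k,\nabla}$ is $\kg_\equ$-basic and that its local pullbacks match those of $\omega_{h,\lfc}$, the stated correspondence follows.
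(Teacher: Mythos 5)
Your proposal is correct and takes essentially the same route as the paper: the paper likewise invokes the function-valued version of Lemma~\ref{lemma-pullbackbasicforms}, asserts that $\widehat{\omega}_{k,\nabla}$ is $\kg_\equ$-basic by a ``straightforward computation'' using unimodularity, and identifies $s_i^\ast \widehat{\omega}_{k,\nabla}$ with the local expression of $\omega_{h,\lfc}$ via $h^i_\loc = k$ and $A_i = s_i^\ast \omega_\nabla$. Your only addition is to spell out the basicity check (horizontality from $\omega_\nabla(\xi^\varP)=\xi$, then Cartan's formula, the structure equation, and $\tr \ad = 0$) that the paper leaves implicit.
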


\begin{proof}
Notice that the proof of Lemma~\ref{lemma-pullbackbasicforms} applies to basic forms in $\Omega^\grast_\tla(\varP)$ which are trivialized as local forms in $\Omega^\grast_\tla(U_i) = \Omega^\grast(U_i) \otimes \exter^\grast \kg^\ast$ via the pull-back $s_i^\ast$.

Using the unimodular property of $\kg$, a straightforward computation shows that the proposed expression for $\widehat{\omega}_{k,\nabla}$ is $\kg_\equ$-basic.

$\omega_{h,\lfc}$ is locally defined on $U_i$ as $(-1)^n \sqrt{|h^i_\loc|} \lfc_i^{1} \ordwedge \cdots \ordwedge \lfc_i^{n}$ where $h^i_\loc = k$ and $\lfc_i^a = A_i^a - \theta^a \in \Omega^1_\tla(U_i)$. $A_i = A_i^a \otimes E_a$ is the local expression of $\omega_\nabla$ given explicitly by $A_i = s_i^\ast \omega_\nabla$.
\end{proof}

\medskip
For any $\omega = \omega_{\dR} \otimes \omega_{\algebraic} \otimes \xi \in \Omega^\grast(\varP) \otimes \exter^\grast \kg^\ast \otimes \kg = \Omega^\grast_\tla(\varP,\kg)$, we define a natural map $\int_{\algebraic} \Omega^\grast_\tla(\varP,\kg) \rightarrow \Omega^{\grast-n}(\varP) \otimes \kg$ by
\begin{equation*}
\int_{\algebraic} \omega  = 
\begin{cases}
\omega_{\dR} \otimes \xi & \text{ if $\omega = \omega_{\dR} \otimes \sqrt{|k|}\, \theta^1 \ordwedge \cdots \ordwedge \theta^n \otimes \xi$}\\
0 & \text{ if $\omega_{\algebraic} \not\in \exter^n \kg^\ast$}
\end{cases}
\end{equation*}

Here we identify the space of forms $\Omega^{\grast}(\varM, \varL)$ with the space of tensorial forms in $\Omega^{\grast}(\varP) \otimes \kg$ \cite{KobaNomi96a}.

\begin{proposition}
\label{prop-relationsdeRhamTLAAtiyah}
The following diagram is commutative

\begin{equation*}
\xymatrix@R=8ex{
  {\Omega^\grast_\lie(\varP, \kg)} \ar@{^{(}->}[r] \ar[d]_-{\int_\inner}
& {\Omega^\grast_\tla(\varP,\kg)} \ar[d]^-{\int_{\algebraic}} 
\\
  {\Omega^{\grast-n}(\varM, \varL)} \ar@{^{(}->}[r] 
& {\Omega^{\grast-n}(\varP) \otimes \kg} 
 }
\end{equation*}
\end{proposition}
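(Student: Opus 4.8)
The plan is to evaluate both routes of the diagram on a form $\omega\in\Omega^\grast_\lie(\varP,\kg)$ and reduce the whole statement to a single computation on the $\kg_\equ$-basic representative $\widehat{\omega}\in\Omega^\grast_\tla(\varP,\kg)_{\kg_\equ}$ of $\omega$ furnished by Proposition~\ref{prop-identificationdifferentialcalculusAtiyah}; this $\widehat{\omega}$ is exactly the image of $\omega$ under the top inclusion. Set $\beta=\int_\algebraic\widehat{\omega}\in\Omega^{\grast-n}(\varP)\otimes\kg$, which by definition is the factor of $\sqrt{|k|}\,\theta^1\ordwedge\cdots\ordwedge\theta^n$ in the expansion of $\widehat{\omega}$ along the $\theta^a$'s. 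Thus the right-then-down route sends $\omega$ to $\beta$, and everything comes down to identifying $\beta$ with the image of $\int_\inner\omega=\omega^\maxinner\in\Omega^{\grast-n}(\varM,\varL)$ under the bottom inclusion.

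First I would show that $\beta$ is a tensorial form of type $\Ad$ on $\varP$; this is the main obstacle, and it is here that unimodularity of $\kg$ enters. The Cartan operation of $\kg_\equ$ on $\Omega^\grast_\tla(\varP,\kg)=\Omega^\grast(\varP)\otimes\exter^\grast\kg^\ast\otimes\kg$ splits, on the generator $\xi^\varP\oplus\xi$, as $i_{\xi^\varP\oplus\xi}=i_{\xi^\varP}+i_\xi$ and $L_{\xi^\varP\oplus\xi}=L_{\xi^\varP}+L_\xi$, where $i_{\xi^\varP},L_{\xi^\varP}$ act on the $\Omega^\grast(\varP)$ factor while $i_\xi$ is the contraction on $\exter^\grast\kg^\ast$ and $L_\xi=[i_\xi,\ds']$ is the Chevalley--Eilenberg Lie derivative for the adjoint representation. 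Writing $\widehat{\omega}=\sum_{s=0}^n\widehat{\omega}_s$ with $\widehat{\omega}_s$ of algebraic degree $s$, the operator $i_{\xi^\varP}$ preserves and $i_\xi$ lowers the algebraic degree, so the degree-$n$ part of $i_{\xi^\varP\oplus\xi}\widehat{\omega}=0$ is $i_{\xi^\varP}\widehat{\omega}_n=0$; since the algebraic factor of $\widehat{\omega}_n$ is the fixed form $\sqrt{|k|}\,\theta^1\ordwedge\cdots\ordwedge\theta^n$, this yields $i_{\xi^\varP}\beta=0$, the horizontality of $\beta$. Likewise $L_{\xi^\varP}$ and $L_\xi$ both preserve the algebraic degree, so the degree-$n$ part of $L_{\xi^\varP\oplus\xi}\widehat{\omega}=0$ reads $L_{\xi^\varP}\widehat{\omega}_n+L_\xi\widehat{\omega}_n=0$. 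On $\theta^1\ordwedge\cdots\ordwedge\theta^n$ the operator $L_\xi$ acts by the scalar $-\tr(\ad_\xi)$, which vanishes precisely because $\kg$ is unimodular (this is the same mechanism as $\tr(C_{a_n})=0$ in \eqref{eq-soninnerlessthatn}), while on the $\kg$-value it acts by $\ad_\xi$. Hence $L_\xi\widehat{\omega}_n$ equals $\ad_\xi\beta$ tensored with $\sqrt{|k|}\,\theta^1\ordwedge\cdots\ordwedge\theta^n$, and the relation becomes $L_{\xi^\varP}\beta=-\ad_\xi\beta$; for connected $G$ this is the infinitesimal form of $\raR_g^\ast\beta=\Ad_{g^{-1}}\beta$, so $\beta$ is tensorial and therefore lies in the image of the bottom inclusion, representing a unique $\bar\beta\in\Omega^{\grast-n}(\varM,\varL)$.

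It then remains to check $\bar\beta=\omega^\maxinner$, which is a purely local comparison. By Lemma~\ref{lemma-pullbackbasicforms} one has $\omega_{\loc}^i=s_i^\ast\widehat{\omega}$, and since $s_i^\ast$ acts only on the $\Omega^\grast(\varP)$ factor and is the identity on $\exter^\grast\kg^\ast\otimes\kg$, it commutes with extraction of the $\sqrt{|k|}\,\theta^1\ordwedge\cdots\ordwedge\theta^n$ coefficient. The coefficient of $\sqrt{|k|}\,\theta^1\ordwedge\cdots\ordwedge\theta^n$ in $\omega_{\loc}^i$ is, by the alternative description of the inner integration, exactly $\omega^\maxinner_{\loc\, i}$; hence $\omega^\maxinner_{\loc\, i}=s_i^\ast\beta$. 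On the other hand $s_i^\ast\beta$ is by definition the local expression $\bar\beta_{\loc\, i}$ of the $\varL$-valued form represented by the tensorial form $\beta$. Therefore $\bar\beta_{\loc\, i}=\omega^\maxinner_{\loc\, i}$ on every $U_i$, so $\bar\beta=\omega^\maxinner=\int_\inner\omega$. Following the left-then-right route gives the bottom inclusion of $\omega^\maxinner$, namely $\beta$, and the diagram commutes.
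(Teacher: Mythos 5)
Your proof is correct and follows essentially the same route as the paper's: you first show that $\int_{\algebraic}$ sends $\kg_\equ$-basic forms to tensorial forms by isolating the top algebraic-degree part of the horizontality and invariance conditions (with unimodularity of $\kg$ killing the Chevalley--Eilenberg Lie derivative of $\sqrt{|k|}\,\theta^1\ordwedge\cdots\ordwedge\theta^n$, and connectedness of $G$ integrating the infinitesimal equivariance), and then identify the two integrals locally via the pullbacks $s_i^\ast$ of Lemma~\ref{lemma-pullbackbasicforms}. The only (harmless) deviation is that where the paper expands basic forms in the mixed basis $(\omega_\nabla^a-\theta^a)$ through Corollary~\ref{cor-pullbackvolumeform}, you instead invoke the characterization of $\omega^\maxinner_{\loc\, i}$ as the coefficient of $\sqrt{|h^i_\loc|}\,\theta^1\ordwedge\cdots\ordwedge\theta^n$ noted after Proposition~\ref{prop-globalinnerform}, which shortcuts the connection-dependent bookkeeping.
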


\begin{proof}
The first point to check is that $\int_{\algebraic}$ maps $\kg_\equ$-basic forms in $\Omega^\grast_\tla(\varP,\kg)$ to tensorial forms in $\Omega^{\grast-n}(\varP) \otimes \kg$. Because $G$ is connected and simply connected, a form $\sum_a \widehat{\omega}^a_{\dR} \otimes \xi_a \in \Omega^{\grast}(\varP) \otimes \kg$ is tensorial if and only if $ \sum_a (L_{\xi^\varP} \widehat{\omega}^a_{\dR}) \otimes \xi_a + \sum_a \widehat{\omega}^a_{\dR} \otimes [\xi, \xi_a] = 0$ and $ \sum_a (i_{\xi^\varP}\widehat{\omega}^a_{\dR}) \otimes \xi_a = 0$ for any $\xi \in \kg$.

A form $\widehat{\omega} = \sum_a \widehat{\omega}^a_{\dR} \otimes \sqrt{|k|}\, \theta^1 \ordwedge \cdots \ordwedge \theta^n \otimes \xi_a$ is basic if and only if for any $\xi \in \kg$ one has
\begin{multline*}
\sum_a (L_{\xi^\varP} \widehat{\omega}^a_{\dR}) \otimes \sqrt{|k|}\, \theta^1 \ordwedge \cdots \ordwedge \theta^n \otimes \xi_a 
\\
+ 
\sum_a \widehat{\omega}^a_{\dR} \otimes (L^\kg_\xi \sqrt{|k|}\, \theta^1 \ordwedge \cdots \ordwedge \theta^n )\otimes \xi_a
\\
+
\sum_a \widehat{\omega}^a_{\dR} \otimes \sqrt{|k|}\, \theta^1 \ordwedge \cdots \ordwedge \theta^n \otimes [\xi,\xi_a] = 0
\end{multline*}
and
\begin{multline*}
\sum_a (i_{\xi^\varP}\widehat{\omega}^a_{\dR}) \otimes \sqrt{|k|}\, \theta^1 \ordwedge \cdots \ordwedge \theta^n \otimes \xi_a
\\
+
\sum_a (-1)^{|\widehat{\omega}^a_{\dR}|} \widehat{\omega}^a_{\dR} \otimes (i_\xi \sqrt{|k|}\, \theta^1 \ordwedge \cdots \ordwedge \theta^n ) \otimes \xi_a
=0.
\end{multline*}
Because $\kg$ is unimodular, one has $L^\kg_\xi \sqrt{|k|}\, \theta^1 \ordwedge \cdots \ordwedge \theta^n = 0$, so that $\int_{\algebraic} \widehat{\omega} = \sum_a \widehat{\omega}^a_{\dR} \otimes \xi_a$ is invariant. Looking at each bidegrees for the horizontally condition on $\widehat{\omega}$, one gets $\sum_a (i_{\xi^\varP}\widehat{\omega}^a_{\dR}) \otimes \sqrt{|k|}\, \theta^1 \ordwedge \cdots \ordwedge \theta^n \otimes \xi_a = 0$ so that $\int_{\algebraic} \widehat{\omega}$ is horizontal.

The second point to check is that $\int_{\algebraic}$ coincides on $\kg_\equ$-basic forms with $\int_\inner$. In order to do that, we consider these integrations on a trivialization of $\varP$ given by a local section $s : U \rightarrow \varP$. Then one has the following diagram:
\begin{equation*}
\xymatrix@R=8ex@C=30pt{
  {\Omega^\grast_\tla(\varP,\kg)_{\kg_\equ}} \ar[d]_-{\int_{\algebraic}} \ar[r]^-{s^\ast}
& {\Omega^\grast_\tla(U,\kg)} \ar[d]^-{\int_\inner}
\\
  {(\Omega^{\grast-n}(\varP)\otimes \kg)_{\text{tensorial}}} \ar[r]^-{s^\ast}  
& {\Omega^{\grast-n}(U) \otimes \kg} 
 }
\end{equation*}
The map $s^\ast$ (see for instance \cite{KobaNomi96a}) in the bottom row is the same as the map $s^\ast$ in the top row. For any basic form $\widehat{\omega} = \sum_a (-1)^n \sqrt{|k|}\, \widehat{\omega}^a_{\dR} (\omega_\nabla^{1} - \theta^1) \ordwedge \cdots \ordwedge (\omega_\nabla^{n} - \theta^n) \otimes \xi^a \in \Omega^\grast_\tla(\varP,\kg)_{\kg_\equ}$ one has 
$s^\ast \widehat{\omega} = \sum_a (-1)^n \sqrt{|k|}\, (s^\ast\widehat{\omega}^a_{\dR})\, \lfc^{1} \ordwedge \cdots \ordwedge \lfc^{n}  \otimes \xi^a$ 
because of Corollary~\ref{cor-pullbackvolumeform}, so that
$\int_\inner s^\ast \widehat{\omega} = \sum_a (s^\ast\widehat{\omega}^a_{\dR}) \otimes \xi^a$. 
On the other hand, one has
$\int_{\algebraic} \widehat{\omega} = \sum_a \widehat{\omega}^a_{\dR} \otimes \xi^a$
so that
$s^\ast \int_{\algebraic} \widehat{\omega} = \sum_a (s^\ast\widehat{\omega}^a_{\dR}) \otimes \xi^a$.

This proves the coincidence of the two integrals, because $\int_\inner$ on $\Omega^\grast_\lie(\varP, \kg)$ is completely determined by $\int_\inner$ on the trivializations of forms in $\Omega^\grast_\tla(U,\kg)$.
\end{proof}

Using Proposition~\ref{prop-tripleforinnernondegeneratemetric}, one can define an inner non degenerate metric $\hg$ on $\Gamma_G(T\varP)$ as a triple $(g, h, \nabla)$ where $h$ and $\nabla$ are defined as above and $g$ is an ordinary metric on the base manifold $\varM$. Then the properties of this triple are exactly the ones defining a metric for a non-abelian Kaluza-Klein theory on $\varP$ \cite{Kerner1988fn}. Notice that the geometrical point of view is generally adopted in these theories (geodesics and trajectories of particle) while our point of view here is the one from field theories.

\smallskip
When the Lie algebra $\kg$ is reductive, the cohomology space $H^\grast(\kg;\kg)$, which appears in the spectral sequence of Section~\ref{sec-cohomology}, identifies with $\kg_\inv \otimes (\exter^\grast \kg^\ast)_\inv$ where $\inv$ specifies invariant elements for the $\ad$-representation on $\kg$ and its adjoint representation $\ad^\ast$ on $\exter^\grast \kg^\ast$ \cite{GreuHalpVans76}.

For Atiyah Lie algebroids, the automorphisms $\halpha_U^V$ are induced by the adjoint action of $G$ on $\kg$ and its coadjoint on $\kg^\ast$. When $G$ is connected and simply connected, this implies that the presheaf $\caH^\grast = H^\grast(\kg;\kg)$ is a constant presheaf, so that there exists a spectral sequence $\{ ({E}_i, \dd_i) \}$ such that
\begin{align*}
E_2^{p,q} &\simeq H^p(\varM) \otimes H^q(\kg;\kg)
\\
E_\infty &\simeq H(\Omega^\grast_\lie(\varP, \kg))
\end{align*}

Another spectral sequence involving the cohomology $(\Omega^\grast_\lie(\varP, \kg), \hd)$ is defined by using the identification of $\Omega^\grast_\lie(\varP, \kg)$ as the basic subcomplex of $(\Omega^\grast_\tla(\varP,\kg), \hd_\tla)$. This spectral sequence $\{ (\widetilde{E}_i, \dd_i) \}$ is associated to the filtration:
\begin{equation*}
F^p(\Omega^q_\tla(\varP,\kg)) = \{\ \widehat{\omega} \in \Omega^q_\tla(\varP,\kg) \ / \ i_{\widehat{X}_1} \cdots i_{\widehat{X}_{q-p+1}} \widehat{\omega} = 0 \ \text{for any } \widehat{X}_k \in \kg_\equ \ \} 
\end{equation*}
Using general results about such a filtration of a Cartan operation \cite{GreuHalpVans76}, one has
\begin{align*}
\widetilde{E}^{p,0}_0 & \simeq \Omega^p_\tla(\varP,\kg)_{\kg_\equ-\hor}
\\
\widetilde{E}^{p,0}_1 &\simeq \Omega^p_\tla(\varP,\kg)_{\kg_\equ}
\\
\widetilde{E}^{p,0}_2 &\simeq H^p(\Omega^\grast_\tla(\varP,\kg)_{\kg_\equ}) = H^p(\Omega^\grast_\lie(\varP, \kg))
\\
\widetilde{E}_\infty &\simeq H^\grast(\Omega^\grast_\tla(\varP,\kg))
\end{align*}
with $H^\grast(\Omega^\grast_\tla(\varP,\kg)) = H^\grast(\varP) \otimes H^\grast(\kg;\kg)$ and ``$\kg_\equ-\hor$'' designates the horizontal elements for the $\kg_\equ$-operation.

\section{Derivations on a vector bundle}
\label{sec-Derivationsonavectorbundle}

Let $\varE$ be a rank $p$ complex vector bundle over the manifold $\varM$. Using any hermitian structure on $\varE$, we suppose that its structure group $H$ is contained in $U(p)$, the group of complexe unitary $p\times p$ matrices. Denote by $\End(\varE) = \varE \otimes \varE^\ast$ the fiber bundle of endomorphisms of $\varE$ where $\varE^\ast$ is the dual vector bundle associated to $\varE$. Denote by $\algA(\varE) = \Gamma(\End(\varE))$ the algebra of endomorphisms of $\varE$.

Let $\kD(\varE)$ be the space of first order operators on $\Gamma(\varE)$ whose symbol is the identity. Then
\begin{equation*}
\xymatrix@1{{\algzero} \ar[r] & {\algA(\varE)} \ar[r]^-{\iota} & {\kD(\varE)} \ar[r]^-{\sigma} & {\Gamma(T \varM)} \ar[r] & {\algzero}}
\end{equation*}
is the transitive Lie algebroid of derivations of $\varE$ where $\sigma$ is the symbol map \cite{MR585879}.

Let us denote by $(\Omega^\grast_\lie(\varE, \algA(\varE)), \hd)$ the graded differential algebra of generalized forms on this transitive Lie algebroid with values in the kernel, and let us denote by $(\Omega^\grast_\lie(\varE), \hd_\lie)$ the graded commutative differential algebra of forms on $\kD(\varE)$ with values in $C^\infty(\varM)$. The natural inclusion $C^\infty(\varM) \rightarrow \algA(\varE)$ induces a morphism of graded differential algebras
\begin{equation*}
\Omega^\grast_\lie(\varE) \hookrightarrow \Omega^\grast_\lie(\varE, \algA(\varE))
\end{equation*}

Let us just collect the formulas for the local description of $\kD(\varE)$. Let $\{ U_i, \phi_i \}_{i \in I}$ be a system of trivializations of $\varE$ associated to a good cover $\{ U_i \}_{i \in I}$ of $\varM$, where $\phi_i : U_i \times \gC^p \rightarrow \varE_{|U_i}$ are linear isomorphisms. Then this system of trivializations induces a natural system of trivializations of $\End(\varE)$, $\{ U_i, \widehat{\phi}_i \}_{i \in I}$, such that $\widehat{\phi}_i : U_i \times M_p(\gC) \rightarrow \End(\varE)_{|U_i}$ and $\widehat{\phi}_i(x,\gamma) \cdotaction \phi_i(x,v) = \phi_i(x, \gamma \cdotaction v)$ for any $\gamma \in M_p(\gC)$ and any $v \in \gC^p$. Any $s \in \Gamma(\varE)$ (resp. $a \in \algA(\varE)$) is then trivialized by a family of maps $s^i : U_i \rightarrow \gC^p$ (resp. $a^i : U_i \rightarrow M_p(\gC)$). A first order operator $\kX \in \kD(\varE)$ is trivialized as a family of elements $X_i \oplus \gamma^i \in \Gamma(T U_i) \oplus \Gamma(U_i \times M_p(\gC))$ through the relation
\begin{equation*}
(\kX \cdotaction s)(x) = \phi_i \left(x, (X_i \cdotaction s_i)(x) + \gamma^i(x) \cdotaction s_i(x) \right)
\end{equation*}
for any $x \in U_i$ where $\cdotaction$ means either the action of a vector field on vector valued functions or the action of matrices on vectors. Notice that $X_i = X_j = X$ on $U_{ij} = U_i \cap U_j \neq \ensvide$ where $X = \sigma(\kX)$ and $\gamma^i = h_{ij} \gamma^j h_{ij}^{-1} + h_{ij} \dd h_{ij}^{-1} (X)$ where $h_{ij} : U_{ij} \rightarrow H \subset U(p)$ are the transition functions of $\varE$ such that $s^i(x) = h_{ij}(x) s^j(x)$ for any $x \in U_{ij}$.
The system of trivializations considered for $\kD(\varE)$ is thus defined by
\begin{align*}
\nabla^{0,i}_X &= X,
&
\Psi_i(a^i) &= a^i \text{ for any $a^i : U_i \rightarrow M_p(\gC)$},
\end{align*}
and one has
\begin{align*}
\alpha_j^i(\gamma) &= h_{ij} \gamma h_{ij}^{-1},
&
\chi_{ij}(X) &= h_{ij} \dd h_{ij}^{-1}(X).
\end{align*}

The Lie algebra on which this Lie algebroid is modelled is $\kg = M_p(\gC) = M_p$ with the commutator as Lie bracket and $n= p^2$. This Lie algebra decomposes as $\kg = \gC \bbbone_p \oplus \ksl_n$ where $\bbbone_p$ is the unit matrix in $M_p$ and $\ksl_p$ is the Lie algebra of traceless matrices in $M_p$.

The local description of $\Omega^\grast_\lie(\varE, \algA(\varE))$ is given by the differential calculi $\Omega^\grast(U_i) \otimes \exter^\grast M_p^\ast \otimes M_p$ and the maps $\halpha_{i}^{\,j} : \Omega^\grast(U_i) \otimes \exter^\grast M_p^\ast \otimes M_p \rightarrow \Omega^\grast(U_j) \otimes \exter^\grast M_p^\ast \otimes M_p$ (on $U_i \cap U_j \neq \ensvide$) are morphisms of graded differential algebras.

In any trivialization, one can decompose any element $X \oplus \gamma^i = \tla(U_i, M_p)$ as $X \oplus \left( \frac{1}{p} \lambda^i \bbbone_p \oplus \gamma_0^i \right)$ where $\lambda^i = \tr(\gamma^i)$ and $\gamma_0^i = \gamma^i - \frac{1}{p} \lambda^i \bbbone_p : U_i \rightarrow \ksl_p$. Then the family $X \oplus \lambda^i$ associated to a family $X_i \oplus \gamma^i$ of trivializations of an element $\kX \in \kD(\varE)$ defines a global element in the transitive Lie algebroid
\begin{equation*}
\xymatrix@1{{\algzero} \ar[r] & {C^\infty(\varM)} \ar[r]^-{\iota} & {\kD(\det(\varE))} \ar[r]^-{\sigma} & {\Gamma(T \varM)} \ar[r] & {\algzero}}
\end{equation*}
where $\det(\varE) = \exter^p \varE$ is the determinant line bundle associated to $\varE$. This map is the natural representation of $\kD(\varE)$ on $\det(\varE)$ given by $\kX \mapsto \exter^p \kX$ where
\begin{equation*}
\left( \exter^p \kX \right)(e_1 \ordwedge \cdots \ordwedge e_p) = \sum_{k=1}^p e_1 \ordwedge \cdots \ordwedge \kX(e_k) \ordwedge \cdots \ordwedge e_p.
\end{equation*}
The induced map $\algA(\varE) \rightarrow C^\infty(\varM)$ is the globally defined trace $\tr$, which is a Lie morphism. This representation gives rise to a natural morphism of graded commutative differential algebras
\begin{equation*}
\Omega^\grast_\lie(\det(\varE)) \rightarrow \Omega^\grast_\lie(\varE).
\end{equation*}

The inner orientability of $\kD(\varE)$ corresponds to the orientability of the (vector) bundle $\End(\varE)$. Because $U(p)$ is unimodular, $\End(\varE)$ is always orientable. The trace map given before defines a non degenerate inner metric $h : \algA(\varE) \otimes_{C^\infty(\varM)} \algA(\varE) \rightarrow C^\infty(\varM)$ given by $h(a,b) = \tr(ab)$. In any local trivialization, the inner metric $h$ is represented by a constant matrix.

Notice that the unimodularity of the (real) Lie algebra $\ku_p$ of $U(p)$ implies the unimodularity of the (complex) Lie algebra $\kg = M_p$.

The inner integration $\int_\inner : \Omega^\grast_\lie(\varE, \algA(\varE)) \rightarrow \Omega^{\grast-n}(\varM, \End(\varE))$ defined by the inner metric $h$ can be composed with the trace map in order to define
\begin{equation}
\label{eq-definnerintegrationtrace}
\int^{\tr}_\inner = \tr \circ \int_\inner : \Omega^\grast_\lie(\varE, \algA(\varE)) \rightarrow \Omega^{\grast-n}(\varM)
\end{equation}

\begin{proposition}
For any $\omega \in \Omega^\grast_\lie(\varE, \algA(\varE))$ one has
\begin{equation*}
\int^{\tr}_\inner \hd \omega = \dd \int^{\tr}_\inner \omega
\end{equation*}
\end{proposition}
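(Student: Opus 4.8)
The plan is to factor the extended inner integration $\int^{\tr}_\inner=\tr\circ\int_\inner$ through the fibrewise trace and thereby reduce the assertion to the $C^\infty(\varM)$-valued commutation statement already established in Proposition~\ref{prop-atiyah-commutationofdifferentials}. First I would record that the trace $\tr:\algA(\varE)\rightarrow C^\infty(\varM)$ intertwines the adjoint action of $\kD(\varE)$ on its kernel with the anchor action on functions, namely $\tr(\kX\cdotaction a)=\sigma(\kX)\cdotaction\tr(a)$ for $\kX\in\kD(\varE)$ and $a\in\algA(\varE)$, since $\tr$ annihilates commutators. Consequently $\tr$ induces a morphism of differential complexes $\tr_*:(\Omega^\grast_\lie(\varE,\algA(\varE)),\hd)\rightarrow(\Omega^\grast_\lie(\varE),\hd_\lie)$, that is $\tr_*\hd=\hd_\lie\tr_*$. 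Concretely this amounts to $\tr\circ\ds'=\ds\circ\tr$: the adjoint terms of the Chevalley--Eilenberg differential $\ds'$ are commutators and die under the trace, leaving exactly the scalar differential $\ds$, while the de~Rham part $\dd$ commutes with $\tr$ trivially.

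Next I would verify that the trace commutes with the inner integration, $\tr\circ\int_\inner=\int_\inner\circ\tr_*$. This is immediate because $\int_\inner$ is the $C^\infty(\varM)$-linear operation extracting the coefficient of $\sqrt{|h_\loc|}\,\theta^1\ordwedge\cdots\ordwedge\theta^n$, which touches only the exterior $\kg^\ast$ part and leaves the $\kg$-valued coefficient untouched, and because both complexes use the very same volume normalization $\sqrt{|h_\loc|}$ coming from $h(a,b)=\tr(ab)$. Applying the trace to that coefficient therefore agrees with taking the coefficient of the already traced form. Combining the two identities with Proposition~\ref{prop-atiyah-commutationofdifferentials} (applied to $\tr_*\omega\in\Omega^\grast_\lie(\varE)$) yields
\begin{align*}
\int^{\tr}_\inner\hd\omega
&=\tr\int_\inner\hd\omega
=\int_\inner\tr_*\hd\omega
=\int_\inner\hd_\lie\tr_*\omega \\
&=\dd\int_\inner\tr_*\omega
=\dd\,\tr\int_\inner\omega
=\dd\int^{\tr}_\inner\omega.
\end{align*}

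The main obstacle is the legitimacy of invoking Proposition~\ref{prop-atiyah-commutationofdifferentials} in this setting: that proposition is stated under the running hypotheses of Section~\ref{sec-applicationAtiyah}, where $\kg$ is semisimple, whereas $\kg=M_p=\gC\bbbone_p\oplus\ksl_p$ is only reductive. The hard part of the write-up is thus to make explicit that its proof uses solely the two conditions that $\sqrt{|h_\loc|}$ be locally constant and that $\kg$ be unimodular, both of which hold for $\kD(\varE)$, the semisimplicity being never used; this is exactly the remark following that proposition. Alternatively I would argue entirely locally, collecting the coefficient of $\sqrt{|h_\loc|}\,\lfc^1\ordwedge\cdots\ordwedge\lfc^n$ in $(\dd+\ds')\omega_\loc$ and then applying $\tr$: the $\dd$-image of the locally constant volume vanishes, the trace kills the adjoint part of $\ds'$, and what remains of the scalar Chevalley--Eilenberg part is governed by \eqref{eq-soninnerlessthatn}, whose right-hand side vanishes precisely by unimodularity of $M_p$. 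This reproduces the computation in the proof of Proposition~\ref{prop-atiyah-commutationofdifferentials} with the trace inserted, confirming $\int^{\tr}_\inner\hd\omega=\dd\int^{\tr}_\inner\omega$.
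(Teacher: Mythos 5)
Your proof is correct, and its essential ingredients --- local constancy of $\sqrt{|h_\loc|}$, unimodularity of $M_p$ entering through \eqref{eq-soninnerlessthatn}, and the trace killing the adjoint part of the differential --- are exactly those of the paper; what differs is the packaging of your primary route. The paper argues directly and locally: it decomposes $\hd$ on $\Omega^\grast(U_i)\otimes\exter^\grast M_p^\ast\otimes M_p$ into the de~Rham part, the scalar Chevalley--Eilenberg part and the adjoint-action part, and repeats the computation of Proposition~\ref{prop-atiyah-commutationofdifferentials} with the trace inserted --- which is precisely the ``alternative'' argument you sketch at the end. Your main route instead factors $\int^{\tr}_\inner$ through the cochain morphism $\tr_\ast : (\Omega^\grast_\lie(\varE,\algA(\varE)),\hd)\rightarrow(\Omega^\grast_\lie(\varE),\hd_\lie)$, which is indeed legitimate: $\tr([\kX,a])=\sigma(\kX)\cdotaction\tr(a)$ since the trace annihilates the local commutator terms, and the traced local forms satisfy the scalar gluing rules because $\tr\circ\Ad_{h_{ij}}=\tr$; the identity $\tr\circ\int_\inner=\int_\inner\circ\tr_\ast$ then holds because both inner integrations extract the coefficient of the same $\sqrt{|h_\loc|}\,\theta^1\ordwedge\cdots\ordwedge\theta^n$ built from the single inner metric $h(a,b)=\tr(ab)$. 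This factorization buys conceptual clarity and reuse of the scalar proposition without redoing the computation; its cost is the hypothesis-transfer issue you correctly isolate as the crux: $\kD(\varE)$ is not literally an Atiyah Lie algebroid of the kind treated in Section~\ref{sec-applicationAtiyah} (its kernel is modelled on $M_p=\gC\bbbone_p\oplus\ksl_p$, which is only reductive, not semisimple), so Proposition~\ref{prop-atiyah-commutationofdifferentials} cannot be invoked verbatim. You resolve this exactly as the paper's own remark licenses: its proof uses only that $\sqrt{|h_\loc|}$ is locally constant and that the Lie algebra is unimodular, both verified here since $h$ has a constant local matrix and $\tr(\ad_\gamma)=0$ on $M_p$. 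With that caveat handled, both of your arguments are sound, and the second one coincides with the paper's proof.
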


\begin{proof}
The differential $\hd$ is locally the sum of three parts on $\Omega^\grast(U_i) \otimes \exter^\grast M_p^\ast \otimes M_p$: the de~Rham differential on $\Omega^\grast(U_i)$, the Chevalley-Eilenberg differential on $\exter^\grast M_p^\ast$ and the adjoint action on $M_p$. Using similar arguments as the ones used in the proof of Proposition~\ref{prop-atiyah-commutationofdifferentials} and the fact that the trace kills the adjoint action on $M_p$, one gets the result.
\end{proof}

In \cite{Mass14}, a natural surjection $\kD(\varE) \rightarrow \der(\algA(\varE))$ was proposed: it associates to any $\kX \in \kD(\varE)$ the derivation $a \mapsto [\kX, a]$ for any $a \in \algA(\varE)$ where the commutator takes place in the space of operators on $\Gamma(\varE)$. Locally this corresponds to $X \oplus \gamma \mapsto X \oplus \ad_\gamma$.

If the structure group of $\varE$ can be reduced such that $H \subset SU(p)$, then there is a natural injection $\der(\algA(\varE)) \rightarrow \kD(\varE)$ of Lie algebroids defined locally by $X \oplus \ad_{\gamma^i} \mapsto X \oplus \gamma^i$ for any traceless $\gamma^i : U_i \rightarrow \ksl_p$ where $\der(\algA(\varE))$ is the Lie algebra and $C^\infty(\varM)$-module of derivations of the associative algebra $\algA(\varE)$. One then has a splitting of Lie algebroids
\begin{equation*}
\kD(\varE) \simeq \der(\algA(\varE)) \oplus C^\infty(\varM).
\end{equation*}
The inclusion of $\der(\algA(\varE))$ into $\kD(\varE)$ induces a natural morphism of graded differential algebras $\Omega^\grast_\lie(\varE,\algA(\varE)) \rightarrow \Omega^\grast_\der(\algA(\varE))$ where $\Omega^\grast_\der(\algA(\varE))$ is the derivation based differential calculus associated to $\algA(\varE)$. See \cite{Mass30,Mass38} for details concerning the precise definitions and properties of these noncommutative structures. This morphism connects together the integration $\int^{\tr}_\inner$ defined in \eqref{eq-definnerintegrationtrace} and a similar integration defined in the context of the noncommutative geometry of the algebra $\algA(\varE)$ (see \cite{Mass15}).

\bibliography{Forms-Lie-algebroids}

\end{document}